\setlist[itemize]{topsep=0ex,itemsep=0ex,parsep=0.4ex}
\setlist[enumerate]{topsep=0ex,itemsep=0ex,parsep=0.4ex}
\crefname{lem}{Lemma}{Lemmas}
\crefname{thm}{Theorem}{Theorems}
\crefname{cor}{Corollary}{Corollaries}
\crefname{prop}{Proposition}{Propositions}
\crefname{conj}{Conjecture}{Conjectures}
\crefname{openproblem}{Open Problem}{Open Problems}
\crefname{claim}{Claim}{Claims}
\newcommand{\defn}[1]{\textcolor{Maroon}{\emph{#1}}}
\newcommand{\mathdefn}[1]{\textcolor{Maroon}{#1}}
\newcommand{\bigchi}{\raisebox{1.55pt}{\scalebox{1.2}{\ensuremath\chi}}}
\newcommand{\fchi}{\bigchi^f\hspace*{-0.2ex}}
\newcommand{\cchi}{\bigchi_{\star}\hspace*{-0.2ex}}
\newcommand{\lchi}{\bigchi^{\ell}}
\newcommand{\lcchi}{\bigchi^{\ell}_{\star}\hspace*{-0.2ex}}
\newcommand{\dchi}{\bigchi\hspace*{-0.1ex}_{\Delta}\hspace*{-0.2ex}}
\newcommand{\ldchi}{\bigchi\hspace*{-0.1ex}_{\Delta}^{\ell}\hspace*{-0.2ex}}
\newcommand{\cfchi}{\bigchi^f_{\star}\hspace*{-0.1ex}}
\newcommand{\dfchi}{\bigchi^f_{\Delta}\hspace*{-0.1ex}}
\newcommand{\TT}{\mathcal{T}}
\newcommand{\GG}{\mathcal{G}}
\newcommand{\CC}{\mathcal{C}}
\newcommand{\C}{\mathcal{C}}
\def\NAT@spacechar{~}
\DeclarePairedDelimiter{\ceil}{\lceil}{\rceil}
\renewcommand{\geq}{\geqslant}
\renewcommand{\leq}{\leqslant}
\DeclareMathOperator{\dist}{dist}
\DeclareMathOperator{\degen}{degen}
\DeclareMathOperator{\tw}{tw}
\DeclareMathOperator{\td}{td}
\DeclareMathOperator{\ltw}{ltw}
\DeclareMathOperator{\tpw}{tpw}
\renewcommand{\thefootnote}{\fnsymbol{footnote}}
\theoremstyle{plain}
\newtheorem{thm}{Theorem}
\newtheorem{lem}[thm]{Lemma}
\newtheorem{cor}[thm]{Corollary}
\newtheorem{prop}[thm]{Proposition}
\newtheorem{claim}{Claim}[thm]
\theoremstyle{definition}
\newcommand{\QQ}{\mathbb{Q}}
\newcommand{\NN}{\mathbb{N}}
\newcommand{\RR}{\mathbb{R}}
\begin{document}

\author{Chun-Hung Liu\,\footnotemark[2]
\qquad David~R.~Wood\,\footnotemark[3]}


\footnotetext[2]{Department of Mathematics, Texas A\&M University, USA (\texttt{chliu@tamu.edu}). Partially supported by NSF under CAREER award DMS-2144042.}

\footnotetext[3]{School of Mathematics, Monash   University, Melbourne, Australia  (\texttt{david.wood@monash.edu}). Research supported by the Australian Research Council and by NSERC.}

\sloppy

\title{\bf\boldmath Quasi-tree-partitions of graphs \\with an excluded subgraph}

\maketitle

\begin{abstract}
This paper studies the structure of graphs with given tree-width and excluding a fixed complete bipartite subgraph, which generalises the bounded degree setting. We give a new structural description of such graphs in terms of so-called quasi-tree-partitions. We demonstrate the utility of this result through applications to (fractional) clustered colouring. Further generalisations of these structural and colouring results are presented. 
\end{abstract}


\renewcommand{\thefootnote}
{\arabic{footnote}}

\section{Introduction}
\label{Intro}

Tree-width is a graph parameter that measures how similar a graph is to a tree. It is of fundamental importance in structural graph theory, especially Robertson and Seymour's Graph Minors project, and also in algorithmic graph theory, since many NP-complete problems are solvable in linear time on graphs with bounded tree-width. See \citep{HW17,Bodlaender98,Reed97} for surveys on tree-width. This paper studies the structure of graphs with given tree-width and excluding a fixed complete bipartite subgraph.

We now define tree-width\footnote{We consider simple finite undirected graphs $G$ with vertex-set $V(G)$ and edge-set $E(G)$. Let \defn{$\delta(G)$} and \defn{$\Delta(G)$} be the minimum degree and maximum degree of a graph $G$ respectively.}. For a tree $T$, a \defn{$T$-decomposition} of a graph $G$ is a collection $(B_x)_{x \in V(T)}$ such that:
\begin{itemize}
    \item $B_x\subseteq V(G)$ for each $x\in V(T)$, 
    \item for every edge ${vw \in E(G)}$, there exists a node ${x \in V(T)}$ with $\{v,w\} \subseteq B_x$, and 
    \item  for every vertex ${v \in V(G)}$, the set ${\{ x \in V(T) \colon v \in B_x \}}$ induces a non-empty (connected) subtree of $T$.
\end{itemize}
The \defn{width} of such a $T$-decomposition is ${\max\{ |B_x| \colon x \in V(T) \}-1}$. A \defn{tree-decomposition} is a $T$-decomposition for some tree $T$, denoted $(T,(B_x)_{x\in V(T)})$. The \defn{tree-width $\tw(G)$} of a graph $G$ is the minimum width of a tree-decomposition of $G$. Note that a connected graph has tree-width at most 1 if and only if it is a tree. Many graph classes have bounded tree-width, including outerplanar graphs, graphs of bounded circumference, etc. More generally, many graph classes $\GG$ have the property that $n$-vertex graphs in $\GG$ have treewidth $O(\sqrt{n})$, including planar graphs, graphs embeddable on a fixed surface, or graphs excluding a fixed minor. 

At the heart of this paper is the following question: What is the structure of graphs with tree-width $k$ (where we allow $k$ to depend on $|V(G)|$)? Without some additional assumption, not much more can be said beyond the definition. But with some additional assumption about an excluded subgraph, much more can be said about the structure of graphs with tree-width $k$.

We need the following definition. For graphs~$H$ and $G$, an \defn{$H$-partition} of~$G$ is a collection~$(V_x)_{x\in V(H)}$ such that:
\begin{itemize}
    \item $\bigcup_{x\in V(H)}V_x=V(G)$ and $V_x\cap V_y = \emptyset$ for all distinct $x,y\in V(H)$, and
    \item for each edge~${vw}$ of~$G$, if~${v \in V_x}$ and~${w \in V_y}$, then~${x = y}$ or~${xy \in E(H)}$.
\end{itemize}
The \defn{width} of such an $H$-partition is~${\max\{ |{V_x}| \colon x \in V(H)\}}$. 
Such partitions are related to graph product structure theory, since for any graph $H$, a graph $G$ has an $H$-partition of width at most $k$ if and only if $G$ is isomorphic to a subgraph of $H\boxtimes K_k$, where $\boxtimes$ denotes the strong product; see~\citep{UTW,DJMMW24,DEMWW22} for example.

Our starting point is \defn{tree-partitions} which are $T$-partitions where $T$ is a tree, as illustrated in \cref{TreePartition}. 
The \defn{tree-partition-width $\tpw(G)$} of a graph $G$ is the minimum width of a tree-partition of $G$. 
Tree-partitions were independently introduced by \citet{Seese85} and \citet{Halin91}, and have since been widely investigated \citep{Bodlaender-DMTCS99,BodEng-JAlg97, DO95,DO96,Edenbrandt86, Wood06,Wood09,BGJ22,DS20,Wood25}. Tree-partition-width has also been called \defn{strong tree-width} \citep{BodEng-JAlg97,Seese85}. 

\begin{figure}
    \centering
    \includegraphics{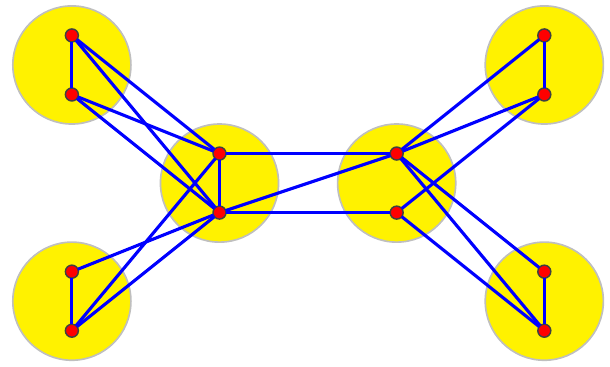}
    \caption{A tree-partition of width 2.}
    \label{TreePartition}
\end{figure}

It is easily seen that $\tw(G)\leq 2\tpw(G)-1$ for every graph $G$~\citep{Seese85}. 
But conversely, tree-partition-width cannot be upper bounded by a function of tree-width. 
For example, the $n$-vertex fan has tree-width 2 and tree-partition-width $\Theta(\sqrt{n})$, which was implicitly shown by \citet[(3.6)]{DO96}. On the other hand, tree-partition-width is upper bounded by a function of tree-width and maximum degree. In particular, a referee of a paper by \citet{DO95} showed the following:

\begin{thm}[\citep{DO95}] 
\label{TPW} 
For $k,\Delta\in\NN$, every graph with tree-width $k$ and maximum degree $\Delta$ has tree-partition-width $O(k\Delta)$.
\end{thm}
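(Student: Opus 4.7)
The plan is to take a tree-decomposition $(T,(B_x)_{x\in V(T)})$ of $G$ of width $k$, root it at an arbitrary node, and refine it into a tree-partition of width $O(k\Delta)$. For each $v \in V(G)$, let $h(v)$ be the node of $T$ closest to the root with $v \in B_{h(v)}$, let $T_v$ be the subtree of $T$ on $\{x : v \in B_x\}$ (which is rooted at $h(v)$), and set $C_x := \{v : h(v) = x\}$. Then $(C_x)_{x\in V(T)}$ partitions $V(G)$ with $|C_x| \le |B_x| \le k+1$.

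The obstruction is that an edge $vw \in E(G)$ with $h(v)$ a strict ancestor of $h(w)$ may ``skip'' through many intermediate nodes of $T$, so $h(v)h(w) \notin E(T)$ in general. The key observation, where the degree bound enters, is that each $T_v$ is structurally simple: after pruning leaves of $T_v$ that are not witnessed by a neighbor of $v$ (such leaves can be removed without violating the tree-decomposition axioms), $T_v$ has at most $\Delta$ leaves and hence at most $\Delta - 1$ branching vertices. So $T_v$ is essentially a ``small spider'' with $O(\Delta)$ structurally important nodes.

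The main construction is to build a tree $T'$ whose nodes correspond to contracted pieces of $T$. Let $X \subseteq V(T)$ consist of every node that is the root $h(v)$ of some $T_v$ or a branching vertex of some $T_v$. Contract each maximal connected subtree of $T$ disjoint from $X$ into the $X$-node immediately above it, producing $T'$. For each new node of $T'$, define its bag to be the union of the $C_y$ over the nodes $y$ it represents. Adjacency in $T'$ corresponds to consecutive $X$-nodes in $T$, and the tree-partition property follows from the fact that if $vw \in E(G)$ with $h(v)$ above $h(w)$, then $v$ lies in every $B_y$ on the $h(v)$-to-$h(w)$ path, so any intermediate $X$-node on that path is already a branching or root node of $T_v$ and has been handled. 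Uniform bag size $O(k\Delta)$ follows from a double count: each vertex $v$ meets $O(\Delta)$ nodes of $X$ (by the spider structure of $T_v$), while each $X$-node contributes at most $k+1$ vertices via its $C$-set.

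The main obstacle will be ensuring that the bag sizes are uniformly $O(k\Delta)$ rather than merely bounded on average, and that the contraction genuinely produces a tree-partition rather than leaving some long-range edges uncovered. Overcoming this likely requires either a more refined choice of $X$ (for instance, adding auxiliary nodes to balance the spacing of $X$ along $T$) or an additional splitting step that redistributes vertices between adjacent super-nodes whenever a super-bag threatens to exceed the $O(k\Delta)$ target while preserving the adjacency constraint.
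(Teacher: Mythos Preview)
Your argument has a real gap at the tree-partition step. You claim that any $X$-node strictly between $h(v)$ and $h(w)$ must be a branching or root node of $T_v$, but this is false: such an intermediate node $y$ may lie in $X$ simply because $y=h(u)$ for some vertex $u$ unrelated to $v$. In fact, since $X$ contains $h(u)$ for \emph{every} vertex $u$, every node $y$ with $C_y\neq\emptyset$ is already in $X$; consequently every node absorbed by the contraction has empty $C$-set, the super-bag at each $x\in X$ is just $C_x$ itself, and your output is exactly $(C_x)_{x\in X}$ indexed by a minor $T'$ of $T$. This is not in general a tree-partition: take a star $K_{1,3}$ with centre $c$ and a path tree-decomposition $x_1x_2x_3$ with $B_{x_i}=\{c,\ell_i\}$ rooted at $x_1$; then $C_{x_1}=\{c,\ell_1\}$, $C_{x_2}=\{\ell_2\}$, $C_{x_3}=\{\ell_3\}$, all three nodes lie in $X$, $T'=T$, and the edge $c\ell_3$ joins non-adjacent bags. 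The double-count for bag size is similarly off, since a vertex $v$ may meet many $X$-nodes that are in $X$ for reasons having nothing to do with $T_v$. Your final paragraph anticipates trouble of roughly this shape, but the concrete mechanism proposed does not work, and no small repair is apparent.

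The paper's route is entirely different and does not start from a fixed tree-decomposition. It builds the tree-partition top-down by recursion on balanced separators: given a designated set $S$ destined for the root bag, if $|S|$ is small it enlarges $S$ by adjoining a vertex together with its neighbours (this is where the degree bound enters, via the $s=1$ case of \cref{pi}) and recurses; if $|S|$ is large it applies the Robertson--Seymour separator lemma to split $G$ into $G_1\cup G_2$ with $|V(G_1\cap G_2)|\le k$ and $S$ roughly halved between the sides, recurses on each $G_i$, and merges the two trees at their roots. The invariants $|B_z|\le\tfrac32|S|-2k$ and $\deg_T(z)\le |S|/(2k)-1$ are maintained arithmetically through the recursion, giving the uniform $O(k\Delta)$ bound directly with no averaging or rebalancing step.
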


This result is incredibly useful, and has found applications in diverse areas including 
graph drawing~\citep{CDMW08,GLM05,DMW05,DSW07,WT07}, graphs of linear growth~\citep{CDGHHHMW23}, 
nonrepetitive graph colouring~\citep{BW08}, 
clustered graph colouring~\citep{ADOV03,LO18}, 
fractional fragility~\citep{DS20}, 
monadic second-order logic~\citep{KuskeLohrey05}, 
network emulations~\citep{Bodlaender-IPL88, Bodlaender-IC90, BvL-IC86, FF82}, 
statistical learning theory~\citep{ZA22}, 
size Ramsey numbers~\citep{DKCPS,KLWY21}, 
and the edge-{E}rd{\H{o}}s-{P}{\'o}sa property~\citep {RT17,GKRT16,CRST18}. 
The essential reason for the usefulness of \cref{TPW} is that in a tree-partition each vertex appears only once, unlike in a tree-decomposition. See \citep{UTW} for a generalisation of \cref{TPW} in terms of $H$-partitions. 

Note that the dependence on tree-width and maximum degree in \cref{TPW} is best possible up to a constant factor.  In particular,  \citet{Wood09} showed that for any $k\geq 3$ and sufficiently large $\Delta$ there is a graph with tree-width $k$, maximum degree $\Delta$, and tree-partition-width $\Omega(k\Delta)$.

\cref{TPW} has been extended in various ways. For example, \citet{DW24,DW22a} showed the same result where the underlying tree has maximum degree $O(\Delta(G))$ and at most $\max\{\frac{|V(G)|}{2k},1\}$ vertices. 

This paper explores structural descriptions of graphs with tree-width $k$ that satisfy some weaker assumption than bounded degree. A graph $G$ \defn{contains} a graph $H$ if some subgraph of $G$ is isomorphic to $H$. On the other hand, $G$ is \defn{$H$-subgraph-free} if no subgraph of $G$ is isomorphic to $H$. Our  focus is on $K_{s,t}$-subgraph-free graphs. The $s=1$ case corresponds to graphs with maximum degree less than $t$, and the $s=2$ case corresponds to graphs with codegree at most $t-1$.  \cref{TPW} shows that $K_{1,t}$-subgraph-free graphs with bounded treewidth have bounded tree-partition-width. However, this is false for $s=2$, since fan graphs are $K_{2,3}$-subgraph-free with treewidth 2 and unbounded tree-partition-width. 


The following more general lower bound, proved in \cref{LowerBound}, says that any result about $H$-partitions of $K_{2,t}$-subgraph-free graphs with bounded treewidth must allow for graphs $H$ that are much more general than trees. 

\begin{thm}
\label{ForceBadH}
For all $k,c,d\in\NN$ with $k\geq 2$ there is a $K_{2,k}$-subgraph-free graph $G$ with $\tw(G) \leq 2k-1$ such that for any graph $H$, if $G$ has an $H$-partition with width at most $c$, then $H$ contains $K_{\ceil{\sqrt{k}}}$ or $K_{\ceil{\sqrt{k}},d}$. 
\end{thm}


\cref{ForceBadH} says  there is no reasonably sparse graph $H$ so that $G$ has an $H$-partition with bounded width. In contrast, the main results of this paper show that we can remove a well-structured sparse subgraph of $G$ so that the remaining graph has a tree-partition.  To make this intuition precise, we need the following definitions, which are a key aspect of this paper. 

Consider a tree $T$ rooted at node $r\in V(T)$. 
For each node $x\in V(T)$, let 
$$\mathdefn{T\uparrow x} := \{y \in V(T) \colon \dist_T(r,y) < \dist_T(r,x)\},$$
where $\dist_T(r,y)$ is the \defn{distance in $T$} between $r$ and $y$, which is the minimum number of edges of a path in $T$ between $r$ and $y$.
As illustrated in \cref{QuasiTreePartition}, for $k\in\NN_0$ and a rooted tree $T$, a \defn{$k$-quasi-$T$-partition} of a graph $G$ is a pair
$\TT=((B_x)_{x\in V(T)},(E_v)_{v\in V(G)})$ where:
\begin{itemize}
\item $(B_x)_{x\in V(T)}$ is a $T$-partition 
of $G-\bigcup_{v\in V(G)}E_v$, and
\item for each $v\in V(G)$, $E_v$ is a set of at most $k$ edges in $G$ incident with $v$, and for each edge $vw\in E_v$, if $v\in B_x$ and $w\in B_y$ then $y\in T\uparrow x$. 
\end{itemize}
The \defn{width} of $\TT$ is $\max\{|B_x| \colon x\in V(T)\}$. 
The \defn{degree} of $\TT$ is $\Delta(T)$.
A \defn{$k$-quasi-tree-partition} is a $k$-quasi-$T$-partition for some rooted tree $T$. 
Note that $0$-quasi-tree-partitions are exactly tree-partitions.

\begin{figure}
    \centering
    \includegraphics{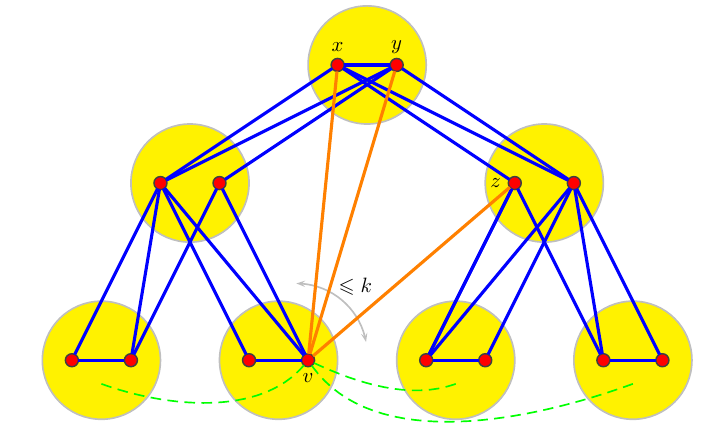}
    \caption{A $k$-quasi-tree-partition, where $E_v=\{vx,vy,vz\}$. Dashed edges are not allowed. Without edge $vz$, the quasi-tree-partition is clean.}
    \label{QuasiTreePartition}
\end{figure}

A $k$-quasi-$T$-partition $((B_x)_{x\in V(T)},(E_v)_{v\in V(G)})$ of a graph $G$ is \defn{clean} if for each edge $vw\in E_v$, if $v\in B_x$ and $w\in B_y$ then $y$ is a non-parent ancestor of $x$ in $T$. This strengthens the condition that  $y\in T\uparrow x$. 

Our main results apply to graphs satisfying a certain sparsity condition, which we now introduce. 
The \defn{1-subdivision} of a graph $H$ is the graph obtained from $H$ by replacing each edge $vw$ of $H$ by a path $vx_{vw}w$ internally disjoint from the rest of the graph. Let \defn{$\rho(G)$} be the maximum of $\delta(H)$ taken over all graphs $H$ such that the 1-subdivision of $H$ is a subgraph of $G$. 

We prove the following result for graphs of given tree-width that exclude a complete bipartite subgraph, where the `weight' term is explained in \cref{subsecWeight}. 

\begin{thm}
\label{Kst-rho}
For any $s,t,k,\rho\in\NN$, every $K_{s,t}$-subgraph-free graph $G$ with $\tw(G)\leq k$ and $\rho(G)\leq\rho$ has a clean $(s-1)$-quasi-tree-partition of width $O(t \rho^{s-1} k)$, degree $O(t \rho^{s-1})$, and weight $O(k)$.
\end{thm}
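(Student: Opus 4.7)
The plan is to construct the quasi-tree-partition from a tree-decomposition of $G$, using the $K_{s,t}$-freeness and the $\rho$-bound to control the ``upward'' edges that do not fit a pure tree-partition. I would start with a tree-decomposition $(T,(W_x)_{x \in V(T)})$ of $G$ of width $k$, rooted at an arbitrary node $r$. For each $v \in V(G)$ define $f(v)$ to be the node of $T$ closest to $r$ with $v \in W_{f(v)}$, and set $B_x := \{v : f(v) = x\}$. Standard properties of tree-decompositions show $(B_x)_{x \in V(T)}$ is a vertex partition of $V(G)$ with $|B_x| \le k+1$, and that for every edge $vw$ of $G$ the nodes $f(v), f(w)$ lie on a common root-path of $T$. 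Call an edge $vw$ \emph{short} if $f(v) = f(w)$ or $f(v)f(w) \in E(T)$, and \emph{long} otherwise. Each long edge $vw$ with $f(v)$ a non-parent ancestor of $f(w)$ is destined for $E_w$.

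The central difficulty is bounding $|E_v| \le s-1$: the naive argument gives only $|E_v| \le k$, since all long neighbors of $v$ lie in $W_{f(v)}$. I would proceed by induction on $s$. For the base case $s=1$, a $K_{1,t}$-subgraph-free graph has $\Delta(G) < t$, so \cref{TPW} yields a $0$-quasi-tree-partition of width $O(tk)$ and tree-degree $O(t)$ (the latter via the degree-bounded refinements of \citep{DW24,DW22a}). For the inductive step, the plan is to prove a sparsity extraction lemma: in a $K_{s,t}$-free graph with $\rho(G) \le \rho$, one can mark at most $s-1$ edges per vertex (consistently oriented upward relative to the rooted decomposition) whose removal leaves a graph of maximum degree $O(t \rho^{s-1})$. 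The extraction uses $K_{s,t}$-freeness to bound common neighborhoods of candidate long targets, and the $\rho$-bound to preclude dense $1$-subdivisions arising in those neighborhoods; an $s$-fold pigeonhole, each round trading $s \mapsto s-1$ at the cost of a factor of $\rho$, explains the $\rho^{s-1}$ dependence.

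With this extraction in hand, let $E_v$ collect the marked edges at $v$. The residual graph $G' := G - \bigcup_v E_v$ has tree-width $\le k$ and maximum degree $O(t \rho^{s-1})$, so \cref{TPW} gives a tree-partition of $G'$ of width $O(t \rho^{s-1} k)$ on a tree of maximum degree $O(t \rho^{s-1})$. To ensure cleanness --- every $vw \in E_v$ targeting a non-parent ancestor of $f(v)$ --- I would insert empty bags along root-paths of the final tree whenever a marked edge would otherwise land on the parent of $f(v)$; such insertions do not affect the width or degree bounds. The weight $O(k)$ bound is read off from the original tree-decomposition of width $k$.

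The main obstacle is the extraction lemma: bounding $|E_v| \le s-1$ while keeping the residual graph's maximum degree to $O(t \rho^{s-1})$. Neither hypothesis alone suffices: $K_{s,t}$-freeness yields only per-bag rather than per-vertex bounds (a $\binom{k}{s}$-style count on how many $s$-tuples of ancestors can be common neighborhoods of a full bag), while the $\rho$-bound alone cannot enforce an $s-1$ threshold. Fusing the two in a controlled inductive recursion --- each recursion level peeling off one marked edge per vertex and absorbing a factor of $\rho$ into the degree, using a $1$-subdivision contradiction to pass from $K_{s,t}$-free to $K_{s-1,t'}$-free at the cost of $t' = O(t \rho)$ --- is the technical heart of the proof.
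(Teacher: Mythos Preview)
Your plan has a structural gap. You orient the edge sets $E_v$ upward relative to the original rooted tree-decomposition $(T,(W_x))$, and then apply \cref{TPW} to the residual $G' = G - \bigcup_v E_v$ as a black box. But \cref{TPW} returns a tree-partition on a \emph{new} tree $T'$ bearing no relation to $T$: there is no reason that, for each $vw \in E_v$, the part of $T'$ containing $w$ lies above the part containing $v$. Inserting empty bags can only upgrade a parent to a non-parent ancestor once the ancestor relation already holds in $T'$; it cannot turn a sideways or downward edge into an upward one. The same misalignment undermines your weight claim, since weight is measured by loads along root-paths of the final tree $T'$, not of $T$.

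The extraction lemma is also shaky as stated, and your own example exposes the tension. Take $G$ to be a vertex $v$ joined to both endpoints of $n$ disjoint edges $a_ib_i$ (a book of $n$ triangles through $v$): this graph is $K_{2,2}$-free with $\rho(G)=1$ and $\tw(G)=2$, yet $\deg(v)=2n$. With $s=2$ you may mark the edge to $v$ at each $a_i$ and $b_i$, leaving a residual of maximum degree $1$ --- but you have thereby committed to $v$ sitting above every $a_i$ and $b_i$, which \cref{TPW} applied to the residual matching has no reason to produce. Your sketch (``each round trading $s \mapsto s-1$ at the cost of a factor of~$\rho$'') never explains how the orientation chosen for the marks is made to survive the passage from $T$ to $T'$; that is precisely the missing idea.

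The paper avoids this by not separating the two tasks. It constructs the tree and the sets $E_v$ \emph{simultaneously}, top-down, via an induction on $(|V(G)|,|V(G)\setminus S|)$ driven by the Robertson--Seymour balanced-separator lemma applied to a designated set $S$ (\cref{heart}). The ingredient replacing your extraction lemma is \cref{pi}: in a $K^*_{s,t}$-free graph with $\rho(G)\le\rho$, every set $X$ satisfies $|N^{\ge s}_G(X)| \le (c-1)|X|$ with $c \le t\rho^{s-1}+1$. When $|S|$ is small one passes to $S' := S \cup \{u\} \cup N^{\ge s}_G(S\cup\{u\})$, places $S$ in a fresh root bag above the recursive output, and notes that every vertex in a grandchild bag has at most $s-1$ neighbours in $S=B_z$ --- those edges become its $E_v$, automatically pointing to a non-parent ancestor in the tree being built. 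When $|S|$ is large a balanced separator splits the instance and the two recursive trees are glued at their roots. Because the tree grows from the root as the $E_v$'s are assigned, cleanness and the width, degree, and weight bounds all fall out of the recursion directly.
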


The main point of \cref{Kst-rho} is that the `quasiness' of the quasi-tree-partition depends only on $s$ (which we may assume is at most $t$ by symmetry, and is often much less than $k$). Moreover, we show that the ``$(s-1)$-quasi'' term in \cref{Kst-rho} is best possible (see \cref{ListColouring}).

In \cref{rho-tw} below, we show that $\rho(G)\leq\tw(G)$. Thus, 
\cref{Kst-rho} implies:

\begin{cor}
\label{KstBasic}
For any $s,t,k\in\NN$, every $K_{s,t}$-subgraph-free graph $G$ with tree-width at most $k$ has a clean $(s-1)$-quasi-tree-partition of width $O(tk^s)$, degree $O(tk^{s-1})$, and weight $O(k)$.
\end{cor}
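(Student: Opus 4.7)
The plan is to derive \cref{KstBasic} immediately from \cref{Kst-rho}. Given $G$ with $\tw(G) \le k$, the paper's \cref{rho-tw} states that $\rho(G) \le \tw(G) \le k$, so we may invoke \cref{Kst-rho} with the parameter $\rho$ set equal to $k$. The resulting bounds are a clean $(s-1)$-quasi-tree-partition of width $O(t\rho^{s-1}k) = O(tk^s)$, degree $O(t\rho^{s-1}) = O(tk^{s-1})$, and weight $O(k)$, matching the statement of \cref{KstBasic} exactly. Thus the corollary is nothing more than a substitution once \cref{Kst-rho} and \cref{rho-tw} are in hand.

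Since \cref{Kst-rho} is the main work of the paper, the only piece to plan is the supporting inequality $\rho(G) \le \tw(G)$. I would proceed as follows. Let $H$ be any graph whose $1$-subdivision $H'$ is a subgraph of $G$; it suffices to show $\delta(H) \le \tw(G)$. Contracting one edge of each length-two subdivided path in $H'$ recovers $H$, so $H$ is a minor of $H'$ and hence of $G$. By minor-monotonicity of tree-width, $\tw(H) \le \tw(G)$. Finally, the standard fact $\delta(H) \le \tw(H)$ holds because in any tree-decomposition of $H$ of width $\tw(H)$, either there is a single bag (in which case $|V(H)| \le \tw(H)+1$ and $\delta(H) \le \tw(H)$ trivially), or one may pick a leaf bag $B_x$ with parent $B_y$ such that $B_x \not\subseteq B_y$, and any vertex in $B_x \setminus B_y$ has all its neighbours inside $B_x$, hence degree at most $\tw(H)$. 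Combining these steps yields $\delta(H) \le \tw(H) \le \tw(G)$, and taking the maximum over admissible $H$ gives $\rho(G) \le \tw(G)$.

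There is no real obstacle specific to the corollary itself; the difficulty sits entirely inside \cref{Kst-rho}. The supporting inequality is a short and standard observation, and the deduction of \cref{KstBasic} is a one-line substitution of $\rho=k$ into the bounds of \cref{Kst-rho}.
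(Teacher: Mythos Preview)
Your proposal is correct and matches the paper's approach exactly: the paper derives \cref{KstBasic} from \cref{Kst-rho} by invoking \cref{rho-tw} to get $\rho(G)\le\tw(G)\le k$ and then substituting $\rho=k$ into the bounds. Your sketch of \cref{rho-tw} is also fine; the paper uses the equivalent route $\delta(H)\le\tw(H)=\tw(H')\le\tw(G)$ via the fact that subdivision preserves tree-width, whereas you use minor-monotonicity directly, but both arguments are standard and yield the same inequality.
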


A 0-quasi-tree-partition is precisely a tree-partition, and a graph is $K_{1,t}$-subgraph-free if and only if it has maximum degree less than $t$. So the $s=1$ case of \cref{KstBasic} implies \cref{TPW}, as well as matching the known degree bound from \citep{DW24,DW22a} (up to a constant factor). 


Since the dependence on $k$ in \cref{Kst-rho} is linear (for fixed $s,t,\rho$),  \cref{noKstStructure} gives interesting results even when tree-width is not bounded, such as for planar graphs and more generally for graphs $G$ of Euler genus $g$. It follows from Euler's formula that $G$ is $K_{3,2g+3}$-subgraph-free, and $\rho(G)\in O(\sqrt{g+1})$. Results about balanced separators by \citet{GHT-JAlg84} imply that $\tw(G)\in O(\sqrt{(g+1)|V(G)|})$. The next result thus follows directly from  \cref{Kst-rho}.

\begin{cor} 
\label{SurfaceCorllary}
For any $g\in\NN_0$ every graph $G$ with Euler genus $g$ has a clean $2$-quasi-tree-partition of width $O((g+1)^2 \tw(G)) = O((g+1)^{5/2}|V(G)|^{1/2})$, degree $O((g+1)^2)$, and weight $O(\tw(G)) = O((g+1)^{1/2}|V(G)|^{1/2})$. 
\end{cor}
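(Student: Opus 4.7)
The plan is to obtain \cref{SurfaceCorllary} as a direct consequence of \cref{Kst-rho} once the three ingredients listed just before the statement are verified and the parameters are matched up.

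First I would record the three facts about a graph $G$ of Euler genus $g$. The standard Euler-formula argument for graphs embedded in a surface of Euler genus $g$ shows that $G$ contains no $K_{3,2g+3}$ subgraph (since $K_{3,2g+3}$ has Euler genus exceeding $g$). For $\rho(G)$, the 1-subdivision of any $H$ with $\delta(H)\ge d$ is a subgraph of $G$, and this 1-subdivision still has Euler genus at most $g$; counting edges and vertices via Euler's formula gives $d\in O(\sqrt{g+1})$, so $\rho(G)\in O(\sqrt{g+1})$. Finally, the balanced-separator result of \citet{GHT-JAlg84} gives $\tw(G)\in O(\sqrt{(g+1)|V(G)|})$.

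Next I would apply \cref{Kst-rho} with $s=3$, $t=2g+3$, $\rho=O(\sqrt{g+1})$, and $k=\tw(G)$. Since $G$ is $K_{3,2g+3}$-subgraph-free with $\rho(G)\le\rho$ and $\tw(G)\le k$, \cref{Kst-rho} yields a clean $(s-1)=2$-quasi-tree-partition of $G$ whose parameters evaluate to
\begin{align*}
\text{width} &= O(t\rho^{s-1}k) = O\bigl((2g+3)\cdot(g+1)\cdot\tw(G)\bigr) = O\bigl((g+1)^2\tw(G)\bigr),\\
\text{degree} &= O(t\rho^{s-1}) = O\bigl((2g+3)(g+1)\bigr) = O((g+1)^2),\\
\text{weight} &= O(k) = O(\tw(G)).
\end{align*}
Substituting the Gilbert--Hutchinson--Tarjan bound $\tw(G)\in O(\sqrt{(g+1)|V(G)|})$ into the width and weight expressions yields the second form of the stated bounds, namely width $O((g+1)^{5/2}|V(G)|^{1/2})$ and weight $O((g+1)^{1/2}|V(G)|^{1/2})$.

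There is no real obstacle here: the only things to be careful about are that $s=3$ is the correct choice (so that $K_{s,t}=K_{3,2g+3}$ and $s-1=2$ gives the claimed ``$2$-quasi''), and that the constants in the $O$-notation absorb the genus-dependent factors coming from Euler's formula and from \citet{GHT-JAlg84}. The substance of the proof is really all contained in \cref{Kst-rho}; the corollary just packages it in the surface setting.
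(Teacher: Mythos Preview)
Your proposal is correct and follows exactly the same approach as the paper: the paragraph preceding \cref{SurfaceCorllary} records the same three facts ($K_{3,2g+3}$-subgraph-freeness via Euler's formula, $\rho(G)\in O(\sqrt{g+1})$, and the Gilbert--Hutchinson--Tarjan tree-width bound) and then states that the corollary follows directly from \cref{Kst-rho}. Your parameter computations with $s=3$, $t=2g+3$ are exactly what the paper intends.
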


We prove \cref{Kst-rho} in \cref{StructureI}, where we also give more motivating examples for minor-closed and non-minor-closed graph classes. 


\cref{KstBasic} has applications to clustered colouring and fractional clustered colouring, which we present in \cref{ColouringI}.

\cref{StructureII} presents a number of extensions and generalisations of \cref{Kst-rho,KstBasic} for (non-clean) quasi-tree-partitions, where we relax the $K_{s,t}$-subgraph-free assumption as follows. As illustrated in \cref{ExtensionSkewered}, for $s,t\in\NN$, a graph $G$ is a \defn{1-extension of $K_{s,t}$} if $G$ contains a connected subgraph $H$ such that contracting $H$ into a vertex creates a graph isomorphic to $K_{1,s,t}$. Note that $K_{s+1,t}$ is a minor of any 1-extension of $K_{s,t}$. A graph $G$ is a \defn{skewered $K_{s,t}$} if it can be obtained from $K_{s,t}$ by adding a path disjoint from the $s$-vertex side and passing through all vertices in the $t$-vertex side.  For example, the fan on $t+1$ vertices is a skewered $K_{1,t}$; it is a typical example of a $K_{2,t}$-subgraph-free graph whose tree-partition-width grows with $t$.


\begin{figure}[ht]
\centering
(a) \includegraphics{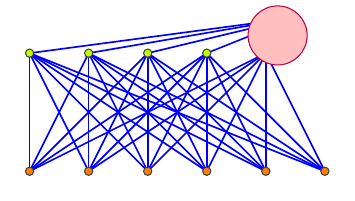}
\qquad
(b) \includegraphics{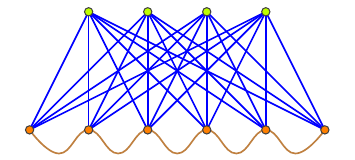}
\caption{(a) 1-extension of $K_{4,6}$, and (b) skewered $K_{4,6}$.}
\label{ExtensionSkewered}
\end{figure}

We prove the following extension of \cref{Kst-rho} (see \cref{NoExtensionSkewered-AlmostTreePartition} for a more precise statement).

\begin{thm}\label{NoExtensionSkeweredRhoCleanIntro}
For any $s,a,b,k,\rho \in\NN$ with $a,b\geq 2$, every graph $G$ with $\tw(G) \leq k$ and $\rho(G) \leq \rho$ that contains no 1-extension of $K_{s,a}$ and contains no skewered $K_{s,b}$ has a clean $(s-1)$-quasi-tree-partition of width $O(((s+ab)ab+k)\rho^{s-1}k)$. 
\end{thm}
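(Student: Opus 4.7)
The plan is to adapt the proof of \cref{Kst-rho} rather than to invoke it as a black box. A direct reduction is not available: the hypothesis of \cref{NoExtensionSkeweredRhoCleanIntro} is too weak to force $K_{s,t}$-subgraph-freeness for any $t$. For instance, $K_{s,t}$ itself has no $1$-extension of $K_{s,a}$ and no skewered $K_{s,b}$ when $a,b\geq 2$, because removing the $s$-side leaves an independent set so no connected subgraph or path within $G-A$ can witness the forbidden configurations. Thus the proof must directly build the quasi-tree-partition, and at the point where the argument from \cref{Kst-rho} uses $K_{s,t}$-subgraph-freeness to bound a bag, we must instead locally extract a 1-extension of $K_{s,a}$ or a skewered $K_{s,b}$.

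Following the recursive construction of \cref{Kst-rho}, the crucial step is to bound the size $N$ of a bag formed over $s$ vertices $a_1,\dots,a_s$ with common neighbour set $B$. A first dichotomy inspects the components of $G-\{a_1,\dots,a_s\}$ meeting $B$. If some such component $C$ contains at least $a+1$ vertices of $B$, choose $b_0,b_1,\dots,b_a\in B\cap C$, take a path $P_j$ in $C$ from $b_0$ to $b_j$ for each $j\geq1$, and set $H:=\bigcup_j P_j - \{b_1,\dots,b_a\}$. Then $H$ is a connected subgraph of $G-\{a_1,\dots,a_s\}$ containing $b_0\in B$ (hence adjacent to every $a_i$) and adjacent to each $b_j$; this exhibits a 1-extension of $K_{s,a}$, contradicting the hypothesis. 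Hence every component of $G-\{a_1,\dots,a_s\}$ meeting $B$ contains at most $a$ vertices of $B$, and there are at least $N/a$ such components.

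The second step invokes the tree-decomposition of $G$ of width $\le k$. Let $T_A:=\bigcap_i T_{a_i}$ be the common subtree. Every $b\in B$ satisfies $T_b\cap T_A\neq\emptyset$, and each bag of $T_A$ contains at most $k+1-s$ vertices of $B$, forcing $|V(T_A)|\geq N/(k+1-s)$. Once $N$ exceeds a constant times $k\cdot ab$, this subtree contains a long path $P$; and since each component of $G-\{a_1,\dots,a_s\}$ contributes at most $a$ vertices of $B$ to the bags along $P$, at least $|V(P)|/a$ distinct components are traversed by $P$. Lifting $P$ to $G$ using the adhesions between consecutive bags (which, once bags exceed size $s$, always contain a vertex outside $\{a_1,\dots,a_s\}$ suitable as a connector) produces a simple path in $G-\{a_1,\dots,a_s\}$ through $b$ vertices of $B$, i.e.\ a skewered $K_{s,b}$; again contradiction. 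The main obstacle is this lifting step: one must ensure the resulting path is simple and visits $b$ \emph{distinct} vertices of $B$, which requires keeping track of the linear order of the $B$-subtrees along $P$ and the non-$A$ connectors in each adhesion. The additive $k$ term in the bound $(s+ab)ab+k$ measures precisely the cost of this tree-decomposition detour (one adhesion-crossing can consume up to $k$ bag-slots before delivering a fresh $B$-vertex). Combining both failures forces $N=O((s+ab)ab+k)$, and feeding this bag-size bound into the construction of \cref{Kst-rho} yields the claimed clean $(s-1)$-quasi-tree-partition of width $O(((s+ab)ab+k)\rho^{s-1}k)$.
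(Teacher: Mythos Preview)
Your plan cannot work as stated, and the failure is visible already on the example you yourself raise. You propose to adapt the recursion behind \cref{Kst-rho} by, at the key step, bounding the number $N$ of common neighbours of a fixed $s$-set $\{a_1,\dots,a_s\}$. But $K_{s,t}$ (which, as you note, satisfies all hypotheses and has tree-width $s\leq k$) has $N=t$ arbitrarily large, so no such bound exists. Your ``second step'' does not rescue this: in the width-$s$ tree-decomposition of $K_{s,t}$ every adhesion is exactly $\{a_1,\dots,a_s\}$, so there is no connector outside $A$ with which to lift the tree-path to a path in $G-A$; the skewered $K_{s,b}$ never materialises. (Your parenthetical about bags of size $>s$ is about bags, not adhesions.) There is also a smaller issue in the first dichotomy: defining $H:=\bigcup_j P_j-\{b_1,\dots,b_a\}$ need not keep the $b_j$ outside $H$, since some $b_j$ can lie internally on another $P_{j'}$; the paper handles this via a spanning-tree/Dilworth argument (\cref{extension-or-skewer}).

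More fundamentally, the proof of \cref{Kst-rho} never ``forms a bag over $s$ vertices''; the only place $K_{s,t}^*$-freeness enters is \cref{pi}, which bounds $|N_G^{\geq s}(X)|$ linearly in $|X|$ for \emph{arbitrary} $X$. That is exactly what fails here. The paper's route (\cref{NoExtensionSkewered-AlmostTreePartition}) therefore does not try to bound common neighbourhoods at all. Instead it inducts on $|V(G)|$: if $G$ is $K_{s,t}$-free for $t=(s+(a-1)(b-1))(a-1)(b-1)+k+1$, apply \cref{noKstStructure} directly; otherwise some $s$-set $X$ has $\geq t$ common neighbours, and \cref{extension-or-skewer} forces every component of $G-X$ to contain at most $(a-1)(b-1)$ of them, so one component $C$ avoids the prescribed set $S$. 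You then recurse on $G-V(C)$ and on $G[V(C)\cup X]$ and glue. The gluing uses the ``vertical path'' property (2) of \cref{noKstStructure}: since $X$ retains $\geq k+1$ common neighbours in $G-V(C)$, the bags meeting $X$ lie on a vertical path, and you attach the $C$-tree beneath the deepest such bag to keep the partition clean. This vertical-path property is the missing idea; without it you cannot reattach $C$ while preserving cleanness.
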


The width in \cref{NoExtensionSkeweredRhoCleanIntro} can be improved by dropping the cleanness of the quasi-tree-partition.

\begin{thm}\label{NoExtensionSkeweredRhoIntro}
For any $s,a,b,k,\rho \in\NN$ with $a,b\geq 2$, every graph $G$ with $\tw(G) \leq k$ and $\rho(G) \leq \rho$ that contains no 1-extension of $K_{s,a}$ and contains no skewered $K_{s,b}$ has an $(s-1)$-quasi-tree-partition of width $O((s+ab)ab\rho^{s-1}k)$. 
\end{thm}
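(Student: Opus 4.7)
The plan is to follow the same overall strategy as in the proof of \cref{NoExtensionSkeweredRhoCleanIntro} but to skip the final ``cleaning'' step. Tracing through the clean version, the extra $+k$ inside the width $O(((s+ab)ab+k)\rho^{s-1}k)$ is precisely the price of forbidding $E_v$-edges to the parent bag: the standard device for enforcing cleanness is to absorb, into each bag, an $O(k)$-sized subset of the parent bag, which inflates the width additively. Forgoing this merge yields the claimed non-clean $(s-1)$-quasi-tree-partition of width $O((s+ab)ab\rho^{s-1}k)$.

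Concretely, I would start from a tree-decomposition $(T,(W_x)_{x\in V(T)})$ of $G$ of width at most $k$, root $T$ at an arbitrary node, and for each $v\in V(G)$ let $f(v)$ be the topmost node whose bag contains $v$, setting $B_x:=\{v:f(v)=x\}$. Then $(B_x)_{x\in V(T)}$ is a vertex-partition of $V(G)$ and, by the subtree property of tree-decompositions, every edge $vw$ either lies inside one bag, goes between adjacent bags (call these \emph{short}), or joins a bag to a strict ancestor bag (call these \emph{long}). Each long edge $vw$ with $v$ in the descendant bag is to be placed in $E_v$, while all remaining edges form the underlying $T$-partition of $G-\bigcup_v E_v$. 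Note that the ancestor endpoint of a long edge lies in $T\uparrow x$ as required, but it may well be the parent of $x$, which is why the resulting quasi-tree-partition is not in general clean.

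The core technical work, and where I expect essentially all of the difficulty to lie, is a structural lemma bounding the number of upward long edges at any single vertex $v$ by $s-1$. Given $s$ or more such long edges at $v$, the subtree structure of the tree-decomposition supplies $s$ connected subgraphs of $G$ each containing $v$ and a distinct ancestor endpoint; one then splits into cases according to the configuration of these ancestors in $T$. If they all lie on a single root-to-leaf chain, then combined with the sparsity bound $\rho(G)\le\rho$ (applied $s-1$ times to control the size of a would-be $s$-side of a complete bipartite pattern), one assembles a skewered $K_{s,b}$; if instead they branch across two subtrees, then contracting an appropriate local subgraph produces a $1$-extension of $K_{s,a}$. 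The $\rho^{s-1}$ factor in the width comes from these iterated sparsity applications; the $(s+ab)ab$ factor is the extremal threshold after which the forbidden patterns appear; and the linear $k$-factor tracks the width of the initial tree-decomposition. Once the long-edge bound is established, assembly of the $(s-1)$-quasi-tree-partition is immediate, and because no parent-absorption step is performed the width avoids the additive $k$ suffered in the clean version.
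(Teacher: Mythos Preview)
Your approach has a genuine gap, and it is not the paper's argument.

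The construction you describe --- take a tree-decomposition of width $k$, set $B_x:=\{v:f(v)=x\}$, and put long edges into $E_v$ --- produces bags of size at most $k+1$. If your ``structural lemma'' bounding the number of long edges at each vertex by $s-1$ were true, you would obtain an $(s-1)$-quasi-tree-partition of width $O(k)$, which is strictly stronger than the stated theorem. For $s=1$ this would give tree-partition-width $O(k)$ for every graph with no $1$-extension of $K_{1,a}$ and no skewered $K_{1,b}$; but bounded-degree graphs satisfy these hypotheses (with $a=\Delta$, $b=\Delta+1$), and it is known that their tree-partition-width can be $\Omega(k\Delta)$. So the long-edge bound is false as stated. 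Concretely, take $K_{1,3}$ (a tree, hence no $1$-extension of $K_{1,2}$ and no skewered $K_{1,2}$) with the width-$1$ path tree-decomposition $\{c,l_1\}$--$\{c,l_2\}$--$\{c,l_3\}$: then $l_3$ has a long edge to $c$, already violating the $s=1$ claim. The sketch you give for the lemma confuses a star $K_{1,s}$ (which is all that $s$ long edges at a single vertex $v$ give you) with a $K_{s,\cdot}$ pattern; the forbidden configurations require a \emph{set} of $s$ vertices with many common neighbours, and your long edges do not supply that.

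Your reading of the clean version is also off: the extra $+k$ in \cref{NoExtensionSkeweredRhoCleanIntro} is not the cost of a final ``absorb parent into child'' merge. The paper's proofs of both theorems are inductive on $|V(G)|$. If $G$ is $K_{s,t}$-subgraph-free (for the right $t$), the base case invokes the $K^*_{s,t}$-free structure theorem (\cref{noKstStructure}), and this is where the width $O(t\rho^{s-1}k)$ arises, via \cref{pi}. Otherwise one finds $X$ with $|X|=s$ and $|N_G^{\ge s}(X)|\ge t$, uses \cref{extension-or-skewer} to bound the number of common neighbours of $X$ in any single component of $G-X$ by $(a-1)(b-1)$, peels off one such component, and glues the two inductively obtained quasi-tree-partitions by identifying a node carrying a vertex of $X$. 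The clean version needs $t$ large enough that $X$ retains $\ge k+1$ common neighbours after removing the peeled component, so that the vertical-path property of \cref{noKstStructure} locates \emph{all} of $X$ along one root-to-leaf path; the non-clean version drops this requirement and hence drops the $+k$ from $t$.
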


Recall that $\rho(G) \leq \tw(G)$ by \cref{rho-tw}. Thus \cref{NoExtensionSkeweredRhoIntro} implies the following extension of \cref{KstBasic} (see \cref{PseudoTW} for a more precise statement). 

\begin{cor} 
\label{NoExtensionSkewered}
For any  $s,a,b,k \in\NN$ with $a,b\geq 2$, every graph $G$ with $\tw(G)\leq k$ that contains no 1-extension of $K_{s,a}$ and contains no skewered $K_{s,b}$ has an $(s-1)$-quasi-tree-partition of width $O((s+ab)abk^{s})$. 
\end{cor}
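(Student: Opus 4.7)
The plan is to deduce this corollary as a direct specialisation of \cref{NoExtensionSkeweredRhoIntro}, using the inequality $\rho(G)\le\tw(G)$ recorded as \cref{rho-tw} (forecast in the paragraph following \cref{Kst-rho}). Under the hypothesis $\tw(G)\le k$, that inequality gives $\rho(G)\le k$. Consequently every hypothesis of \cref{NoExtensionSkeweredRhoIntro} holds for $G$ with the parameter choice $\rho:=k$: the tree-width is at most $k$, the sparsity bound $\rho(G)\le k$ is in force, and by assumption $G$ contains no $1$-extension of $K_{s,a}$ and no skewered $K_{s,b}$.

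Applying \cref{NoExtensionSkeweredRhoIntro} with these parameters produces an $(s-1)$-quasi-tree-partition of $G$ of width
$$O((s+ab)ab\,\rho^{s-1}k) \;=\; O((s+ab)ab\,k^{s-1}\cdot k) \;=\; O((s+ab)ab\,k^{s}),$$
which matches the target bound, and with the same ``quasiness'' $s-1$. There is essentially no obstacle at this step: all of the structural work is already contained in \cref{NoExtensionSkeweredRhoIntro}, and the only auxiliary fact invoked is the elementary bound $\rho(G)\le\tw(G)$ from \cref{rho-tw}. I would therefore present the argument as a one-line substitution, and defer any sharper formulation (recovering a finer dependence, as promised in the remark pointing to \cref{PseudoTW}) to the subsequent, more precise version rather than to the proof of the qualitative corollary itself.
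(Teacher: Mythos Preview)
Your proposal is correct and matches the paper's approach exactly: the paper deduces \cref{NoExtensionSkewered} directly from \cref{NoExtensionSkeweredRhoIntro} together with $\rho(G)\le\tw(G)$ (\cref{rho-tw}), precisely as you do. There is nothing to add.
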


This result with $s=1$ implies the following qualitative strengthening of the original result for tree-partitions (\cref{TPW}), since every graph with maximum degree $\Delta$ contains no 1-extension of $K_{1,\Delta+1}$ and contains no skewered $K_{1,\Delta+1}$

\begin{cor} 
\label{NoExtensionSkewered-TPW}
For any $a,b,k\in\NN$ every graph $G$ with $\tw(G)\leq k$ that contains no 1-extension of $K_{1,a}$ and no skewered $K_{1,b}$ has tree-partition-width $O(a^2b^2k)$.
\end{cor}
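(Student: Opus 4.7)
The plan is to deduce this as the $s=1$ specialisation of \cref{NoExtensionSkewered}. Since $0$-quasi-tree-partitions are exactly tree-partitions (as the paper notes right after the definition), the $s=1$ conclusion of \cref{NoExtensionSkewered} is precisely a tree-partition, which is what is required.

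Before invoking \cref{NoExtensionSkewered}, which assumes $a,b \geq 2$, I would dispose of the edge cases $a=1$ or $b=1$ by a monotonicity argument in the parameters. Suppose $G$ contains a 1-extension of $K_{1,a+1}$ witnessed by a connected subgraph $H \subseteq G$ with $G/H \cong K_{1,1,a+1}$. For any vertex $v \in V(G)\setminus V(H)$ corresponding to a leaf of the $(a{+}1)$-side, the subgraph $H$ is untouched by deleting $v$, and $(G-v)/H \cong K_{1,1,a}$, so $G-v$ is a 1-extension of $K_{1,a}$. A parallel argument, applied to one endpoint of the added path in a skewered $K_{1,b+1}$ (together with any pendant new-path vertices beyond the last traversed leaf), shows that a skewered $K_{1,b+1}$ contains a skewered $K_{1,b}$ as a subgraph. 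Hence ``no 1-extension of $K_{1,a}$'' implies ``no 1-extension of $K_{1,a'}$'' for every $a' \geq a$, and similarly for the skewered condition. So one may replace $a,b$ by $\max\{a,2\}$ and $\max\{b,2\}$ without losing the hypothesis; this at most quadruples $a^2$ and $b^2$, which is absorbed into the $O(\cdot)$.

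With $a,b \geq 2$ secured, applying \cref{NoExtensionSkewered} with $s=1$ produces a tree-partition of width $O((1+ab)\,ab\,k)$. Under $a,b \geq 2$ we have $ab \geq 4$, so $1+ab \leq 2ab$, and the width simplifies to $O(a^2 b^2 k)$, as claimed. There is no genuine obstacle in this deduction: the substantive content resides in \cref{NoExtensionSkewered}, and this corollary is a matter of unfolding the $s=1$ specialisation and handling the small-parameter edge cases by monotonicity.
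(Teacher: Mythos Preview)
Your proposal is correct and follows essentially the same route as the paper: the corollary is stated immediately after \cref{NoExtensionSkewered} with the remark ``This result with $s=1$ implies the following\ldots'', using that a $0$-quasi-tree-partition is a tree-partition. You are in fact more careful than the paper, which silently drops the constraint $a,b\ge 2$; your monotonicity argument (that a 1-extension of $K_{1,a+1}$ contains a 1-extension of $K_{1,a}$, and likewise for skewered $K_{1,b}$) correctly disposes of the edge cases, and the arithmetic $(1+ab)ab\,k \le 2a^2b^2k$ for $a,b\ge 2$ is fine.
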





\subsection{Weight} \label{subsecWeight}

We now explain the `weight' term in the above results. Intuitively, the weight of a quasi-tree-partition is the `cost' of converting it into a tree-decomposition without extensively changing the ancestor-descendant relation of the bags. More formally, say $\TT=(T,(B_x)_{x\in V(T)},(E_v)_{v\in V(G)})$ is a clean quasi-tree-partition of a graph $G$. For each node $x\in V(T)$, define the \defn{load $C_x$} to be the set of vertices $w\in V(G)$, such that there exists an edge $vw\in E_v$ with $v\in B_y$ and $w\in B_a$, where $a$ is a non-parent ancestor of $x$ in $T$, and $y=x$ or $y$ is a descendant of $x$ in $T$. The \defn{weight} of $\TT$ is $\max\{|C_x|:x\in V(T)\}$. This property is interesting for the following reason. 
If $r$ is the root of $T$, then define $\widehat{B}_r:=B_r$. For each non-root node $x\in V(T)$ with parent $y\in V(T)$, define $\widehat{B}_x:=B_x\cup B_y\cup C_x$. Then $\widehat{T}:=(T,(\widehat{B}_x)_{x\in V(T)})$ is a tree-decomposition of $G$, since for each edge $vw\in E(G)$ with $v\in B_x$ and $w\in B_y$, where $y$ is an ancestor of $x$ or $y=x$, the vertex $w$ is in $\widehat{B}_q$ for each node $q$ on the $xy$-path in $T$. If $\TT$ has width $k$ and weight $w$, then $\widehat{T}$ has width at most $2k+w-1$, and $\tw(G)\leq 2k+w-1$. Thus, if $\TT$ comes from \cref{Kst-rho}, then $\widehat{T}$ has width $O(t \rho^{s-1} k)$. Without a bound on the weight of $\TT$, this construction of a tree-decomposition might not have bounded width. Indeed, as illustrated in \cref{Grid}, for all $n\in\NN$ there is a graph with tree-width $n$ that has a 1-quasi-tree-partition of width 1 and degree 2. 
A feature of \cref{Kst-rho} is that it produces a quasi-tree-partition that is not far from a tree-decomposition with small width.

\begin{figure}[!ht]
    \centering
    \includegraphics{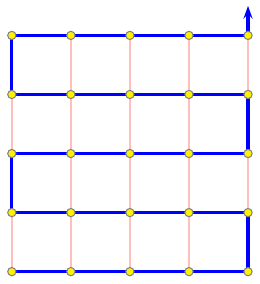}
    \caption{Order the vertices of the $n\times n$ grid (which has tree-width $n$) by following the blue path $P$. This defines a clean quasi-$P$-partition of width 1. Each vertex $v$ has at most one neighbour that is not a neighbour in $P$ and appears before $v$ in $P$. Thus this is a clean 1-quasi-$P$-partition.}
    \label{Grid}
\end{figure}

\section{Preliminaries}
\label{Prelims}

In this paper, $\NN:=\{1,2,\dots\}$ and $\NN_0:=\{0,1,2,\dots\}$.

We consider simple finite undirected graphs $G$ with vertex-set $V(G)$ and edge-set $E(G)$.

A \defn{rooted tree} is a tree $T$ with a nominated vertex called the \defn{root}. Consider a tree $T$ rooted at a vertex $r$. A path $P$ in $T$ is \defn{vertical} if the vertex in $P$ closest to $r$ is an endpoint of $P$. Consider a non-root vertex $x$ in $T$, and let $P$ be the $xr$-path in $T$. Every vertex $y$ in $P-x$ is an \defn{ancestor} of $x$, and $x$ is a \defn{descendent} of $y$. The neighbour $y$ of $x$ on $P$ is the \defn{parent} of $x$, and $x$ is a \defn{child} of $y$. The \defn{vertex-height} of a rooted tree $T$ is the maximum number of vertices on a root--leaf path in $T$. 

The \defn{closure} of a rooted tree $T$ is the graph with vertex-set $V(T)$, where two distinct vertices are adjacent if and only if one is an ancestors of the other in $T$. The \defn{tree-depth} of a graph $G$ is the minimum vertex-height of a tree $T$ such that $G$ is a subgraph of the closure of $T$. It is well-known and easily seen that $\tw(G)\leq\td(G)-1$.

A \defn{graph class} is a collection of graphs closed under isomorphism. A graph class $\GG$ is \defn{proper} if some graph is not in $\GG$. A graph class $\GG$ is \defn{hereditary} if for every graph $G$, every induced subgraph of $G$ is in $\GG$. A graph class $\GG$ is \defn{monotone} if for every graph $G\in\GG$, every subgraph of $G$ is in $\GG$.

A graph $H$ is a \defn{minor} of a graph $G$ if a graph isomorphic to $H$ can be obtained from $G$ by deleting vertices and edges, and contracting edges. A graph class $\GG$ is \defn{minor-closed} if for every graph $G\in \GG$, every minor of $G$ is in $\GG$. For example, for each $k\in\NN_0$, the class of graphs with tree-width at most $k$ is minor-closed.

A \defn{surface} is a compact 2-dimensional manifold. For any fixed surface $\Sigma$, the class of graphs embeddable on $\Sigma$ (without crossings) is minor-closed. A surface with $h$ handles and $c$ cross-caps has \defn{Euler genus} $2h+c$. The \defn{Euler genus} of a graph $G$ is the minimum Euler genus of a surface in which $G$ embeds.

Consider a graph $G$. For each vertex $v\in V(G)$, let \defn{$N_G(v)$} be the set of neighbours of $v$ in $G$. 
Now consider a set $X\subseteq V(G)$. 
For  $s\in\NN$, let \defn{$N^{\geq s}_G(X)$} be the set of vertices in $V(G)-X$ with at least $s$ neighbours in $X$. Let $\mathdefn{N_G(X)}:=N^{\geq 1}_G(X)$. 
A \defn{common neighbour} of $X$ is a vertex $v$ with $X\subseteq N_G(v)$; that is, $v\in N_G^{\geq |X|}(X)$.

Let $G$ be a graph. By considering a leaf bag, it is well-known and easily seen that, 
\begin{equation}
\label{delta-tw}
    \delta(G) \leq \tw(G).
\end{equation}
For $d\in\NN_0$, $G$ is \defn{$d$-degenerate} if $\delta(H)\leq d$ for every subgraph $H$ of $G$. The \defn{degeneracy $\degen(G)$} of $G$ is the maximum of $\delta(H)$, taken over all subgraphs $H$ of $G$. It is well-known and easily seen that 
\begin{equation}
\label{chi-degen-tw}
    \chi(G)-1 \leq \degen(G) \leq \tw(G).
\end{equation}
Note that degeneracy can be characterised via quasi-tree-partitions. 

\begin{prop}
\label{DegenQuasiTreePartition}
For $k\in\NN$, a graph $G$ is $k$-degenerate if and only if $G$ has a $(k-1)$-quasi-tree-partition of width 1. 
\end{prop}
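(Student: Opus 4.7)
The plan is to prove both directions separately.

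For the direction from the quasi-tree-partition to $k$-degeneracy, suppose $G$ has a $(k-1)$-quasi-tree-partition $(T,(B_x)_{x\in V(T)},(E_v)_{v\in V(G)})$ of width $1$. I would show that every non-empty subgraph $H$ of $G$ contains a vertex of degree at most $k$. Pick $v\in V(H)$ whose (singleton) bag has maximum depth in $T$ among bags meeting $V(H)$. For each neighbour $w$ of $v$ in $H$, the edge $vw$ is either (i)~an edge of the $T$-partition of $G-\bigcup_u E_u$, in which case the bags of $v$ and $w$ are adjacent in $T$ and by the maximality of depth $w$'s bag must be the parent of $v$'s bag, giving at most one such $w$; or (ii)~contained in some $E_u$ with $u\in\{v,w\}$, and here $u=w$ is impossible as it would require $v\in T\uparrow w$, contradicting maximality. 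So $u=v$ and this contributes at most $|E_v|\le k-1$ neighbours. Altogether $\deg_H(v)\le k$, so $G$ is $k$-degenerate.

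For the converse, suppose $G$ is $k$-degenerate and fix a degeneracy ordering $v_1,\dots,v_n$ in which every $v_i$ has at most $k$ \emph{back-neighbours} (neighbours in $\{v_1,\dots,v_{i-1}\}$). I would construct a rooted tree $T$ on $V(G)$ with root $v_1$ as follows: for each $i\ge 2$, let the parent $p(v_i)$ of $v_i$ be a back-neighbour of $v_i$ of maximum depth in the portion of $T$ already built, breaking ties arbitrarily, or $v_1$ if $v_i$ has no back-neighbour at all. Set $B_{v_i}:=\{v_i\}$ and let $E_{v_i}$ consist of the edges from $v_i$ to its back-neighbours other than $p(v_i)$. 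The bound $|E_{v_i}|\le k-1$ is immediate. The $T$-partition condition on $G-\bigcup_v E_v$ holds because the only surviving $G$-edges are the edges $v_ip(v_i)$ (when $v_i$ has a back-neighbour), and these are tree edges of $T$.

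The main step that needs checking, and where the construction can easily go wrong, is that each edge in $E_{v_i}$ points strictly upward in $T$: if $v_j\in E_{v_i}$ then $v_j$ is a back-neighbour of $v_i$ distinct from $p(v_i)$, so by the choice of $p(v_i)$ as a deepest back-neighbour one has $\dist_T(v_1,v_j)\le\dist_T(v_1,p(v_i))<\dist_T(v_1,v_i)$, whence $v_j\in T\uparrow v_i$. This ``deepest back-neighbour'' choice is essential; the naive alternative of taking $T$ to be a path in the degeneracy order fails when $v_{i-1}$ happens not to be a back-neighbour of $v_i$, since then all of $v_i$'s (up to $k$) back-neighbours would have to be placed in $E_{v_i}$, exceeding the $k-1$ budget.
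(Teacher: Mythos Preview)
Your proof is correct and takes essentially the same approach as the paper. The only cosmetic difference is that the paper phrases the forward construction inductively (remove a vertex of degree at most $k$, apply induction to $G-v$, then attach $v$ as a new leaf below a deepest node whose bag contains a neighbour of $v$), whereas you unroll this into an iterative build along a fixed degeneracy ordering; your ``deepest back-neighbour'' rule is exactly the paper's attachment choice.
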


\begin{proof}
$(\Longleftarrow)$ Assume $G$ has a $(k-1)$-quasi-tree-partition $(T,(B_x)_{x\in V(T)})$ of width 1. 
Given any subgraph $H$ of $G$, let $x$ be a deepest bag of $T$ such that $B_x\cap V(H)\neq\emptyset$, and let $v\in B_x\cap V(H)$. Let $y$ be the parent of $x$ in $T$. By the choice of $x$, $\deg_G(v)\leq |E_v|+|B_y|\leq k$. Hence $G$ is $k$-degenerate.

$(\Longrightarrow)$ We proceed by induction on $|V(G)|$. The base case is trivial. Let $G$ be a $k$-degenerate graph. So $G$ has a vertex $v$ of degree at most $k$. By induction, $G-v$ has a $(k-1)$-quasi-tree-partition of width 1. 
Let $y$ be a deepest node in $T$ such that $B_y\cap N_G(v)\neq\emptyset$; if $N_G(v)=\emptyset$. then let $y$ be the root of $T$. 
Add a child node $x$ of $y$ to $T$. Let $B_x:=\{v\}$. 
We obtain a $(k-1)$-quasi-tree-partition of $G$ of width 1, where $E_v:=\{vw\in E(G):w\in N_G(v)\setminus B_y\}$.
\end{proof}

Note that \cref{DegenQuasiTreePartition} generalises the construction of a quasi-tree-partition of the grid graph illustrated in \cref{Grid}. 

Note the following upper bound on $\rho(G)$. Say the 1-subdivision $H'$ of a graph $H$ is a subgraph of a graph $G$. Then $\delta(H)\leq\tw(H)$ by \cref{delta-tw}, and $\tw(H')\leq\tw(G)$ since tree-width is monotone under taking subgraphs. It is well-known and easily seen that $\tw(H)=\tw(H')$ (for any subdivision $H'$ of $H$). Combining these facts, 
$\delta(H)\leq\tw(H)=\tw(H')\leq\tw(G)$. Hence 
\begin{equation}
    \label{rho-tw}
    \rho(G)\leq\tw(G).
\end{equation}


We need the following variation on a result of \citet{OOW19}, where $K^*_{s,t}$ is the graph obtained from $K_{s,t}$ by adding $\binom{s}{2}$ new vertices, each adjacent to a distinct pair of vertices in the colour class of $s$ vertices in $K_{s,t}$, as illustrated in \cref{Kst}.

\begin{figure}[ht]
    \centering
    \includegraphics{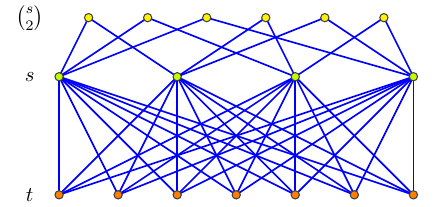}
    \caption{The graph $K^*_{4,7}$.}
    \label{Kst}
\end{figure}

\begin{lem}
\label{pi}
For any $s,t,\rho\in\mathbb{N}$, let
$$c:=c(s,t,\rho):=
    \begin{cases}
 t & \text{ when }s=1\\
  1+\rho + (t-1) \tbinom{\rho}{s-1} &  \text{ when }s\geq 2.
    \end{cases}$$
Then for every $K^*_{s,t}$-subgraph-free graph $G$ with $\rho(G)\leq\rho$, and for any set $X\subseteq V(G)$, 
$$| N^{\geq s}_G(X) |  \leq (c-1) \lvert X \rvert.$$
\end{lem}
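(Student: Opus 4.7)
The case $s=1$ is immediate: $K^*_{1,t}=K_{1,t}$ forces $\Delta(G)\le t-1$, giving $|N_G(X)|\le(t-1)|X|=(c-1)|X|$.

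For $s\ge 2$, I will split $Y:=N^{\ge s}_G(X)$ into matched and unmatched parts via a maximum-matching construction. For each $v\in Y$, fix an $s$-subset $S_v\subseteq N_G(v)\cap X$, and form the bipartite graph $B$ with parts $Y$ and $\binom{X}{2}$, where $v\in Y$ is joined to each pair $e\subseteq S_v$. Let $M$ be a maximum matching of $B$; for each matched pair $e$, denote by $v_e\in Y$ its matched partner, so that $e\subseteq S_{v_e}\subseteq N_G(v_e)$. Let $F$ be the simple graph on $X$ whose edges are the matched pairs, so $|E(F)|=|M|$ and the $1$-subdivision of $F$ embeds in $G$ using the distinct vertices $v_e$ as subdivision vertices. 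Hence $\rho(G)\le\rho$ forces $F$ to be $\rho$-degenerate, giving $|M|\le\rho|X|$.

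For each unmatched $v\in Y$, maximality of $M$ implies every pair $e\subseteq S_v$ is matched (else $(v,e)$ could be added to $M$), so $S_v$ is a clique of size $s$ in $F$. The standard degeneracy-ordering count yields at most $\binom{\rho}{s-1}|X|$ such $s$-cliques in $F$. I claim that for each $s$-clique $S$ of $F$, at most $t-1$ unmatched vertices $v$ satisfy $S_v=S$. Indeed, suppose $t$ such vertices $v_1,\dots,v_t$ exist; then $S$ as the $s$-side, $\{v_1,\dots,v_t\}$ as the $t$-side, and the matched witnesses $v_e$ for $e\in\binom{S}{2}$ as the pair-subdivision vertices form a copy of $K^*_{s,t}$ in $G$, contradicting the hypothesis. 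Summing $t-1$ over all $s$-cliques of $F$ gives at most $(t-1)\binom{\rho}{s-1}|X|$ unmatched vertices, and hence $|Y|\le|M|+(t-1)\binom{\rho}{s-1}|X|\le(c-1)|X|$.

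The main point to verify is the $K^*_{s,t}$ extraction above: the $s$ vertices of $S$, the $t$ unmatched vertices $v_1,\dots,v_t$, and the $\binom{s}{2}$ matched witnesses $v_e$ must be pairwise distinct, and each $v_e$ must be adjacent in $G$ to both endpoints of $e$. The disjointness follows because matched and unmatched $Y$-vertices are disjoint, the $v_e$'s are pairwise distinct by the matching, and $Y\cap X=\emptyset$. The adjacencies hold automatically from $e\subseteq S_{v_e}\subseteq N_G(v_e)$. Beyond this bookkeeping, the proof combines the $1$-subdivision characterisation of $\rho(G)\le\rho$ with the standard enumeration of $s$-cliques in a $\rho$-degenerate graph; I do not anticipate further technical obstacles.
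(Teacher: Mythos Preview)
Your argument is correct and follows essentially the same approach as the paper's proof: build a bipartite graph between $N^{\ge s}_G(X)$ and $\binom{X}{2}$, take a maximal/maximum matching, observe the matched pairs form a $\rho$-degenerate graph on $X$, and bound unmatched vertices via the $s$-clique count together with the $K^*_{s,t}$-freeness. The only cosmetic differences are that you fix an $s$-subset $S_v$ up front and restrict the bipartite edges to pairs inside $S_v$ (the paper uses all pairs in $N_G(v)\cap X$), and you take a maximum matching rather than merely a maximal one; neither change affects the argument.
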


\begin{proof}
If $s=1$ then every vertex in $X$ has degree at most $t-1$, implying $| N^{\geq s}_G(X) |  \leq (t-1) \lvert X \rvert = (c-1)\lvert X\rvert$. 
Now assume that $s\geq 2$. Let $H$ be the bipartite graph with bipartition $\{N^{\geq s}_G(X),\binom{X}{2}\}$, where $v\in N^{\geq s}_G(X)$ is adjacent in $H$ to $\{x,y\}\in\binom{X}{2}$ whenever $\{x,y\} \subseteq N_G(v)\cap X$. 
Let $M$ be a maximal matching in $H$. 
Let $Q$ be the graph with vertex-set $X$, where $xy\in E(Q)$ whenever $\{v,\{x,y\}\}\in M$ for some vertex $v\in N^{\geq s}_G(X)$. 
Thus, the 1-subdivision of every subgraph of $Q$ is a subgraph of $G$. Hence, $Q$ is $\rho$-degenerate, implying $|M| =  |E(Q)| \leq \rho |V(Q)| = \rho |X|$. Moreover,  $Q$ contains at most $\binom{\rho}{s-1}|X|$ cliques of size exactly $s$ \citep{NSTW06,Wood07}. Exactly $|M|$ vertices in $N^{\geq s}_G(X)$ are incident with an edge in $M$. 
For each vertex $v\in N^{\geq s}_G(X)$ not incident with an edge in $M$, by maximality, $N_G(v)\cap X$ is a clique in $Q$ of size at least $s$.  
Define a mapping from each vertex $v \in N^{\geq s}_G(X)$ to a clique of size exactly $s$ in $Q[N_G(v)\cap X]$.  At most $t-1$ vertices $v\in N^{\geq s}_G(X)$ are mapped to each fixed $s$-clique in $Q$, as otherwise $G$ contains $K^*_{s,t}$ (including the vertices matched to $\binom{X}{2}$). Hence $|N^{\geq s}_G(X)| \leq \rho|X| + (t-1) \binom{\rho}{s-1}|X| = (c-1)|X|$.
\end{proof}

Note that $c(s,t,\rho) \leq t\rho^{s-1}+1$.

Since $K^*_{s,t}$ contains $K_{s,t}$, any result for $K^*_{s,t}$-subgraph-free graphs is also applicable for $K_{s,t}$-subgraph-free graphs. On the other hand, there is only a small difference between $K^*_{s,t}$ and $K_{s,t}$ since $K_{s,t+\binom{s}{2}}$ contains $K^*_{s,t}$. The advantage in considering $K^*_{s,t}$ over $K_{s,t}$ is improved dependence on $t$. 

\section{Lower Bound}
\label{LowerBound}

The following result with $p=q=\ceil{\sqrt{k}}$ implies the lower bound, \cref{ForceBadH}, introduced in \cref{Intro}. The construction in fact has bounded tree-depth.

%
%
%
%
%

\begin{thm}
\label{ForceBadHpq}
For all $p,q,k,c,d\in\NN$ with $p,q\geq 2$ and $k \geq (p-1)(q-1)+1$, there is a $K_{2,k}$-subgraph-free graph $G$ with $\tw(G)+1 \leq \td(G) \leq 2k$ such that for every graph $H$, if $G$ has an $H$-partition with width at most $c$, then $H$ contains $K_p$ or $K_{q,d}$. 
\end{thm}

\begin{proof}
The following construction is illustrated in \cref{ConstructG}. 
Let $n\gg k,q,c,d$ as detailed below. 
Let $Q$ be the graph with vertex-set $[n]\times[k]$ where $(x,y)$ is adjacent to $(x',y')$ if and only if $x=x'$ and $y\neq y'$. Thus $Q$ consists of $n$ disjoint copies of $K_k$. For $x\in[n]$, the set $\{(x,y):y\in[k]\}$ is called a \defn{column} of $Q$. For $y\in[k]$, the set $\{(x,y):x\in[n]\}$ is called a \defn{row} of $Q$.
Let $T$ be the complete $n$-ary tree of vertex-height $k$. 
Initialise $G:=T$. 
For each root--leaf path $v_1,\dots,v_k$ in $T$,  
add to $G$ a copy of $Q$ disjoint from $G$, where $v_i$ is complete to the $i$-th row of $Q$, for each $i\in\{1,\dots,k\}$. 
Each vertex in $Q$ is called a \defn{descendent} of each of $v_1,\dots,v_k$. Delete $E(T)$ from $E(G)$. 

\begin{figure}[h]
    \centering
    \includegraphics{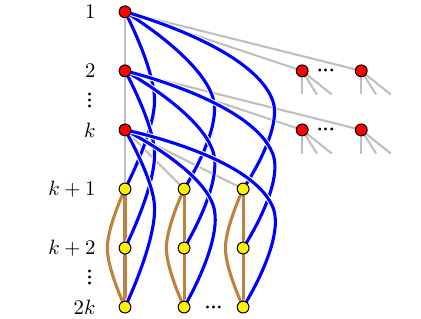}
    \caption{Construction of $G$. }
    \label{ConstructG}
\end{figure}

To see that $\td(G)\leq 2k$, let $T'$ be the tree obtained from the complete $n$-ary tree of vertex-height $k+1$ by subdividing each edge incident to a leaf $k-1$ times. So $T'$ has vertex-height $2k$, and $G$ is a subgraph of the closure of $T'$. Thus $\tw(G)+1\leq\td(G)\leq 2k$.

We now show that $G$ is $K_{2,k}$-subgraph-free. Label each vertex $v$ by $\dist_{T'}(r,v)+1$, where $r$ is the root of $T'$. Each label is in $[2k]$. 
There are two types of edges in $G$. 
A first type of edge joins vertices labelled $i$ and $k+i$ for some $i\in\{1,\dots,k\}$. 
A second type of edge joins vertices labelled $k+i$ and $k+j$ for some distinct $i,j\in\{1,\dots,k\}$. Suppose for the sake of contradiction that there are distinct vertices $v,w$ in $G$ with at least $k$ common neighbours. 
Say $x$ is a common neighbour of $v$ and $w$. 
First suppose that $v$ is labelled $i$ and $w$ is labelled $j$ for some $i,j\in\{1,\dots,k\}$. Thus, the edges $vx$ and $wx$ are of the first type, implying $x$ is labelled $k+i$ and $k+j$, implying $i=j$ and $v=w$ (since $v,w$ have a common neighbour $x$ not in $T$), which is a contradiction. 
Now suppose that $v$ is labelled $i$ and $w$ is labelled $k+j$ for some $i,j\in\{1,\dots,k\}$. Let $y$ be the unique ancestor of $w$ labelled $j$. Since $\deg_G(w)=k$, the $k$ common neighbours of $v$ and $w$ include $y$. Hence $vy$ is an edge joining vertices labelled $i$ and $j$, which is a contradiction. Finally, suppose that $v$ is labelled $k+i$ and $w$ is labelled $k+j$ for some $i,j\in\{1,\dots,k\}$. 
Let $y$ be the unique ancestor of $v$ labelled $i$.
Let $z$ be the unique ancestor of $w$ labelled $j$.
Since $\deg_G(v)=\deg_G(w)=k$, the $k$ common neighbours of $v$ and $w$ include both $y$ and $z$. Both $yv$ and $zv$ are edges of the first type, implying $i=j$ and $y=z$. The other neighbours of $v$ are precisely the vertices in the column containing $v$. Similarly, the other neighbours of $w$ are precisely the vertices in the column containing $w$. 
Since $i=j$ and $v\neq w$, these columns are distinct, implying that $v$ and $w$ have no common neighbours in addition to $y$. Since $k\geq 2$, this is a contradiction. Hence $G$ is $K_{2,k}$-subgraph-free. 


Let $H$ be any graph such that $G$ has an $H$-partition $(V_x)_{x\in V(H)}$ with width at most $c$. 
Let $v_1$ be the root of $T$. 
Say $v_1\in V_{x_1}$ where $x_1\in V(H)$. 
Since $|V_{x_1}|\leq c$ and $n\geq c$, there is a child $v_2$ of $v_1$ such that $v_2$ and every descendent of $v_2$ is not in $V_{x_1}$. 
Say $v_2\in V_{x_2}$. 
Since $|V_{x_2}|\leq c$ and $n\geq c$, there is a child $v_3$ of $v_2$ such that $v_3$ and every descendent of $v_3$ is not in $V_{x_2}$. 
Repeating this argument, we find a root--leaf path $v_1,\dots,v_k$ in $T$ such that $v_1,\dots,v_k$ are in distinct parts $V_{x_1},\dots,V_{x_k}$ and no descendent of $v_k$ is in $V_{x_1}\cup\dots\cup V_{x_k}$. 

Consider the copy of $Q$, where $v_i$ is complete to the $i$-th row of $Q$, for each $i\in[k]$. 
For each part $V_x$ (where $x\in V(H)$) that intersects $Q$, say the \defn{signature} of $V_x$ is the set of rows of $Q$ that $V_x$ intersects. Say $V_x$ is \defn{big} if the signature of $V_x$ has size at least $q$, otherwise $V_x$ is \defn{small}. Consider any set $S$ of $q$ rows in $Q$. If $d$ big parts have signature $S$, then $H$ contains $K_{q,d}$ (including the subset of $\{x_1,\dots,x_k\}$ that corresponds to $S$). Now assume that for each set $S$ of $q$ rows, there are less than $d$ big parts with signature $S$. Thus, the number of big parts that intersect $Q$ is less than $\binom{k}{q}d$. Each part intersects at most $c$ columns of $H$. Taking $n> \binom{k}{q}dc$, there is a column $X$ of $Q$ that intersects no big part. Hence, every part that intersects $X$ is small. Since the vertices in $X$ are in distinct rows, every small part intersects at most $q-1$ vertices in $X$. Thus, at least $\ceil{\frac{k}{q-1}}\geq p$ distinct parts intersect $X$, implying that $H$ contains $K_p$, as desired. 
\end{proof}

\section{Structural Results I}
\label{StructureI}

This section proves \cref{Kst-rho} from \cref{Intro}. \cref{Kst-rho} follows from the case $S=\emptyset$ in the following stronger result (\cref{noKstStructure}) which has some additional properties. In particular, Property (2) of \cref{noKstStructure} is used in \cref{StructureII}. The proof of \cref{noKstStructure} relies on a technical lemma (\cref{heart}), which is stated and proved below.

\begin{thm} 
\label{noKstStructure}
 Fix $s,t,\rho\in\mathbb{N}$ and define $c:=c(s,t,\rho)$ as in \cref{pi}. 
 Then for any $k\in\NN$ and any $K_{s,t}^*$-subgraph-free graph $G$ with $\tw(G)\leq k-1$ and $\rho(G)\leq\rho$, for any set $S\subseteq V(G)$ with $|S| \leq 12 ck$, there exists a clean $(s-1)$-quasi-tree-partition $(T,(B_x)_{x\in V(T)},(E_v)_{v\in V(G)})$ of $G$ of width at most $18ck$, degree at most $6c$, and weight at most $12k-1$, such that:
\begin{enumerate}[(1)]
	\item if $z$ is the root of $T$ then $S\subseteq B_z$, and
		\item for every $X \subseteq V(G)$, if $X$ has at least $k+1$ common neighbours in $G$, then there exists a vertical path in $T$ passing through each node $x \in V(T)$ with $X \cap B_x \neq \emptyset$.
\end{enumerate}
\end{thm}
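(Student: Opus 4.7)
I will prove the theorem by induction on $|V(G)|$. The base case, where $V(G)\subseteq S$, is handled trivially by the quasi-tree-partition consisting of a single root bag equal to $V(G)$. For the inductive step, the decisive ingredient will be the technical separator lemma \cref{heart}, applied to $G$ together with a tree-decomposition of width at most $k-1$ and the prescribed set $S$. I expect \cref{heart} to produce a set $W\supseteq S$ of size at most $18ck$ such that: (i) each connected component $C$ of $G-W$ is strictly smaller than $G$, so induction applies; (ii) for each $C$ the interface $W_C:=N_G(V(C))\cap W$ has size $O(k)$, small enough that $(c-1)|W_C|\leq 12ck$; and (iii) the components of $G-W$ can be organised into at most $6c$ groups attached at a single root, controlling the degree.

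Given $W$, define the root $r$ of the new tree $T$ with $B_r:=W$. For each connected component $C$ of $G-W$, set $S_C:=N^{\geq s}_G(W_C)\cap V(C)$. Since $N_G(v)\cap W=N_G(v)\cap W_C$ for every $v\in V(C)$, \cref{pi} applied with $X:=W_C$ yields $|S_C|\leq(c-1)|W_C|\leq 12ck$. Because $\tw$, $\rho$, and $K^*_{s,t}$-subgraph-freeness are all monotone under induced subgraphs, the induction hypothesis applies to $(G[V(C)],S_C)$, producing a clean $(s-1)$-quasi-tree-partition $(T_C,B^C,E^C)$ rooted at some node $r_C$ with $S_C\subseteq B^C_{r_C}$. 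Glue by making $r_C$ a child of $r$. For each edge $vw$ with $v\in V(C)$ and $w\in W$: if $v\in S_C$ then $v\in B^C_{r_C}$ sits in a bag adjacent to $B_r$, so $vw$ is an ordinary tree-partition edge; otherwise $v$ has at most $s-1$ neighbours in $W$, and these edges are placed into $E_v$. Cleanness holds because $v\notin S_C$ forces $v$ strictly below $r_C$ in $T_C$, making $r$ a non-parent ancestor of $v$'s bag.

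The bounds are then checked as follows. The width is $|B_r|\leq 18ck$, with bags inside each $T_C$ bounded by induction. The root has degree at most $6c$ by (iii), with deeper degrees inherited. For the weight, at any node $x$ inside some $T_C$ the load $C_x$ is its inductive load inside $T_C$ (at most $12k-1$) plus at most $|W_C|$ new endpoints in $B_r$ reachable by $E$-edges from the subtree of $x$; careful tuning inside \cref{heart} so that $|W_C|$ is already absorbed into the $12k-1$ budget preserves the overall bound. Property (1) is immediate since $S\subseteq W=B_r$. Property (2) will follow by arranging in \cref{heart} that any set $X$ with at least $k+1$ common neighbours either lies entirely in $W$ (covered by $B_r$ alone) or has all its common neighbours contained in a single component (so the vertical-path condition descends from induction), using the sub-tree property of the tree-decomposition of width at most $k-1$.

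The main obstacle, as the excerpt signals, is proving \cref{heart} itself: one must simultaneously control $|W|$, the per-component interface $|W_C|$, the number of components, and preserve the common-neighbour property. I anticipate a centroid-style argument on the tree-decomposition of $G$, iterated with saturation by $N^{\geq s}_G(\cdot)$ (bounded via \cref{pi} so the blow-up remains linear in $k$), combined with a careful accounting that charges interface vertices back to bags of size at most $k$ in the original tree-decomposition.
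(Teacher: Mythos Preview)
Your proposal has the architecture inverted. In the paper, \cref{noKstStructure} is a five-line wrapper: if $|V(G)|<4k$ use a one-node tree; otherwise enlarge $S$ (if necessary) to a set $S'$ of size exactly $4k$ and invoke \cref{heart} directly. All the induction lives inside \cref{heart}, which carries the extra invariants $|S|\geq 4k$, $|B_z|\leq\tfrac32|S|-2k$, and $\deg_T(z)\leq\tfrac{|S|}{2k}-1$, and proceeds in three cases: a trivial base case; a ``grow $S$'' step (add one vertex, saturate by $N^{\geq s}_G$, recurse on the \emph{same} graph with the larger $S'$, then peel $S$ off the root into a new root bag); and a Robertson--Seymour balanced separation of $S$ (recurse on each side with the separator included, then merge the two roots). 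So \cref{heart} is not a one-shot separator lemma producing a set $W$; it is the recursive engine itself.

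More seriously, your gluing scheme can violate the $(s-1)$-quasi bound. You recurse on $G[V(C)]$, so the inductive $E^C_v$ never sees edges from $v$ to $W$; for a vertex $v$ sitting at depth at least $2$ in $T_C$ you then \emph{add} its (up to $s-1$) edges to $W$ on top of up to $s-1$ edges already present in $E^C_v$, giving $|E_v|$ as large as $2(s-1)$. (Your claim that ``$v\notin S_C$ forces $v$ strictly below $r_C$'' is also false: induction guarantees only $S_C\subseteq B^C_{r_C}$, not equality.) The paper sidesteps this by never deleting vertices: in the ``grow $S$'' case the recursion is on $G$ itself with $S'\supseteq S$, so every edge from a deep vertex to $S$ is already covered by the inductive $E'_v$ and one simply sets $E_v:=E'_v$; in the separation case the separator lies in both recursive pieces and ends up in the merged root bag, so no cross-edges need $E_v$ at all. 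The same additivity problem infects your weight argument, where the inductive load plus $|W_C|$ genuinely exceeds $12k-1$ and cannot be ``absorbed'' without a different inductive setup.
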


\begin{proof}
First assume that $|V(G)|<4k$. Let $T$ be the tree with $V(T)=\{x\}$ and $E(T)=\emptyset$. Let $B_x:=V(G)$. Let $E_v:=\emptyset$ for each $v\in V(G)$. Then $((B_x)_{x\in V(T)},(E_v)_{v\in V(G)})$ is a clean $0$-quasi-$T$-partition. The width is $|B_x|=|V(G)|<4 k \leq 18ck$, the degree is $\Delta(T)=0\leq 6 c$, and the weight is 0. Property (2) is trivial. 

Now assume $|V(G)| \geq 4k$. If $|S|\geq 4k$ then we are done by \cref{heart} below. Otherwise, let $S' \subseteq V(G)$ with $S' \supseteq S$ and $|S'|=4k$. The result now follows from \cref{heart} replacing $S$ by $S'$.
\end{proof}

The next lemma is the heart of the paper. 

\begin{lem}
\label{heart}
Fix $s,t,\rho\in\mathbb{N}$ and define $c:=c(s,t,\rho)$ as in \cref{pi}. 
Then for any $k\in\NN$, for any $K^*_{s,t}$-subgraph-free graph $G$ with $\tw(G)\leq k-1$ and $\rho(G)\leq\rho$, for any set $S\subseteq V(G)$ with $4 k\leq|S| \leq 12 ck$, there exists a clean $(s-1)$-quasi-tree-partition $(T,(B_x)_{x\in V(T)},(E_v)_{v\in V(G)})$ of $G$ of width at most $18ck$, degree at most $6c$, and weight at most $12k-1$, 
such that: 
\begin{enumerate}[(1)]
\item if $z$ is the root of $T$, then $S\subseteq B_z$,  $|B_z|\leq \frac32|S|-2k$, and $\deg_T(z)\leq \frac{|S|}{2k} - 1$, and
\item for every $X \subseteq V(G)$, if $X$ has at least $k+1$ common neighbours in $G$, then there is a vertical path  in $T$ passing through each node $x \in V(T)$ with $X \cap B_x \neq \emptyset$,
\end{enumerate}
\end{lem}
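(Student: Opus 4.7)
The plan is to use strong induction on $|V(G)|$. The base case is $V(G)=S$: take $T$ to be a single node $z$ with $B_z=V(G)$ and each $E_v=\emptyset$, and verify that all bounds follow from $|S|\geq 4k$, with property~(2) holding vacuously. For the inductive step, I would exploit $\tw(G)\leq k-1$ to obtain, via a standard centroid argument on a width-$(k-1)$ tree-decomposition of $G$, an \emph{$S$-balanced} separator $W\subseteq V(G)$ with $|W|\leq k$ in which every component of $G-W$ meets $S$ in at most $|S|/2$ vertices.

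Next, I would enumerate the components of $G-W$ and greedily bundle them into groups $\mathcal{G}_1,\dots,\mathcal{G}_p$ so that each $F_j:=\bigcup_{C\in\mathcal{G}_j}(C\setminus S)$ has $|F_j\cap S|$ between roughly $2k$ and $\tfrac{|S|}{2}+2k$; this forces $p\leq \tfrac{|S|}{2k}-1$, matching the required degree bound at $z$. Let $W^\dagger$ consist of those $w\in W$ adjacent to two or more $F_j$'s, and set $B_z := S\cup W^\dagger$. With the grouping chosen carefully, $|W^\dagger|\leq \tfrac{|S|}{2}-2k$, so $|B_z|\leq \tfrac{3}{2}|S|-2k$.

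For each group I would define the recursive input $S_j$ to consist of the $W$-vertices adjacent only to $F_j$, the ``bad'' set $N^{\geq s}_G(S\cup W)\cap F_j$ (sized by \cref{pi}, which gives $|N^{\geq s}_G(S\cup W)|\leq (c-1)(|S|+k)$), plus padding, adjusted so that $4k\leq |S_j|\leq 12ck$. Apply the inductive hypothesis to $G[F_j\cup S_j]$ with input $S_j$, producing a clean $(s-1)$-quasi-tree-partition rooted at some $z_j$ with $B_{z_j}=S_j$, and attach $z_j$ as a child $x_j$ of $z$. Every edge from a descendant of $z$ to $B_z$ that is not already a tree-partition edge is placed as a quasi-edge in $E_v$; \cref{pi} applied to $S\cup W$ forces $|E_v|\leq s-1$ for each such $v$, cleanness is automatic because $B_z$ is the root, and propagating the argument through all levels keeps the weight of each bag below $12k$. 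For property~(2), any $X$ with more than $k$ common neighbours cannot have its common neighbourhood split across multiple components of $G-W$ (else $X\subseteq W$ and hence $X\subseteq B_z$, a single bag); therefore $X$ lies in $B_z$ together with one child's subtree, where induction delivers the required vertical path.

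The main obstacle is the regime $|S|\approx 4k$, in which $|B_z|\leq \tfrac{3}{2}|S|-2k$ collapses to $B_z=S$ and $\deg_T(z)\leq 1$: then $W^\dagger$ must be empty, which forces a degenerate ``single-group'' bundling with $W$ absorbed entirely into $B_{x_1}$ while the recursive $|S_{x_1}|$ still fits within $[4k,12ck]$. Getting this corner right while simultaneously honouring the width ($18ck$), degree ($6c$), and weight ($12k-1$) bounds across all levels of recursion is the delicate core of the argument, and it requires repeated invocations of \cref{pi} with sets such as $S\cup W$ and $W$ to keep every count on target.
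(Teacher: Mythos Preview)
Your high-level plan---use a balanced separator and recurse---matches the paper's intuition, but the paper executes it quite differently, and several steps in your proposal do not go through as stated.

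The paper inducts on the lexicographic order of $(|V(G)|,|V(G)\setminus S|)$ and splits into three cases. When $|V(G)\setminus S|\leq 18ck$, a two-node tree suffices. When $4k\leq|S|\leq 12k-1$, the paper does \emph{not} use a separator at all: it picks any $u\notin S$, replaces $S$ by $S':=S\cup\{u\}\cup N^{\geq s}_G(S\cup\{u\})$ (of size at most $c(|S|+1)\leq 12ck$ by \cref{pi}), recurses on the \emph{same} graph with this larger set, and then splits the resulting root bag into $B_z:=S$ and a new child holding the remainder. This is exactly the step that handles the regime you flag as the ``main obstacle'', and it is why the secondary inductive parameter $|V(G)\setminus S|$ is needed. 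Only when $|S|\geq 12k$ does the paper invoke a separator, and then it is the Robertson--Seymour $(\tfrac23,\tfrac13)$ separation into exactly \emph{two} pieces $G_1,G_2$ with $|V(G_1\cap G_2)|\leq k$. The two root bags merge, and the identities $(\tfrac32|S_1|-2k)+(\tfrac32|S_2|-2k)$ together with $|S_1|+|S_2|\leq|S|+2k$ telescope to give $|B_z|\leq\tfrac32|S|-2k$; similarly for $\deg_T(z)$. A $p$-way split would lose a factor of roughly $p$ in this telescoping and does not yield the stated constants.

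Beyond the difference in strategy, your argument has concrete gaps. First, your induction is on $|V(G)|$ alone; the ``degenerate single-group'' manoeuvre you propose for small $|S|$ does not decrease $|V(G)|$, so without a second inductive parameter the recursion does not terminate. Second, your verification of property~(2) is incorrect: you deduce $X\subseteq W$ and then claim $X\subseteq B_z$, but $B_z=S\cup W^{\dagger}$, not $S\cup W$. The vertices of $W\setminus W^{\dagger}$ are distributed into \emph{different} children's root bags, so $X\subseteq W$ need not lie on a single vertical path. In the paper's two-way split this issue vanishes because the entire separator $V(G_1\cap G_2)$ is placed in $B_z$. Third, the weight bound of $12k-1$ is asserted but not derived; in the paper it falls out in Case~2 because the load at any child of the new root is contained in $S$ with $|S|\leq 12k-1$, and is inherited unchanged in Case~3. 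Your multi-way scheme would need a separate argument bounding the load at each $x_j$.
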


\begin{proof}
We proceed by induction on the lexicographic order of $(|V(G)|,|V(G)\setminus S|)$.
	
\textbf{Case 1.} $|V(G) \setminus S|\leq 18ck$: Let $T$ be the 2-vertex tree with $V(T)=\{y,z\}$ and $E(T)=\{yz\}$. Consider $T$ to be rooted at $z$.  Note that $\Delta(T)=1\leq 6c$ and $\deg_T(z)=1\leq \frac{|S|}{2k}-1$. 
Let $B_z:=S$ and $B_y:=V(G) \setminus S$. 
Thus $|B_z|=|S|\leq \frac32 |S|-2k\leq 18ck$ and $|B_y|\leq |V(G) \setminus S|\leq 18ck$. 
Moreover, there exists a vertical path in $T$ passing through each node of $T$. Let $E_v:=\emptyset$ for each $v\in V(G)$. So the loads satisfy $C_z=C_y=\emptyset$, and the weight is 0.
Thus $((B_x)_{x\in V(T)},(E_v)_{v\in V(G)})$ is the desired clean ($s-1)$-quasi-$T$-partition of $G$.

Now assume that $|V(G) \setminus S|\geq 18ck$.
We first deal with the case when $|S|$ is small.

\textbf{Case 2.} $|V(G) \setminus S|\geq 18ck$ and $4k \leq |S|\leq 12 k-1$: Let $u$ be any vertex in $V(G) \setminus S$, and let $S':=S \cup\{u\} \cup N^{\geq s}_G(S \cup\{u\})$.	
By \cref{pi}, $|S'|\leq |S|+1+(c-1)( |S|+1)=c(|S|+1) \leq 12ck$. 
Since $|S'|>|S|\geq 4k$, by the induction hyposthesis, $G$ has a clean $(s-1)$-quasi-tree-partition $(T',(B'_x)_{x\in V(T')},(E'_v)_{v\in V(G)})$ of width at most $18ck$, degree at most $6c$, and weight at most $12k-1$ such that:
\begin{itemize}
\item  if $z'$ is the root of $T$, then $S'\subseteq B'_{z'}$, $|B'_{z'}|\leq \frac32|S'|-2k \leq 18ck$, $\deg_{T'}(z')\leq \frac{|S'|}{2k} - 1 \leq 6c-1$, and
\item for every $X \subseteq V(G)$, if $X$ has at least $k+1$ common neighbours in $G$, then there exists a vertical path in $T'$ passing through each node $x \in V(T')$ with $X \cap B'_x \neq \emptyset$.
\end{itemize}
Let $C'_x$ be the load of each $x\in V(T')$, so $|C'_x|\leq 12k-1$. 

Let $T$ be the tree obtained from $T'$ by adding one new node $z$ adjacent to $z'$. Consider $z$ to be the root of $T$. Let $B_z:=S$ and $B_{z'}:=B'_{z'}\setminus S$ and $B_x:=B'_x$ for each $x\in V(T')\setminus\{z'\}$. For $x\in V(T)$, we have $|B_x|\leq \max\{18ck,|S|\}\leq\max\{18ck,12k\}=18ck$. Hence the width bound is satisfied. Also, $S=B_z$ and $|B_z|=|S| \leq \frac32 |S|-2k$.

Let $E_v:=\emptyset$ for each $v\in B_z\cup B_{z'}$. 
For each child $x$ of $z'$ and for each $v\in B_x$, let $E_v:= \{vw\in E(G): w \in N_G(v) \cap B_z\}$, which has size at most $s-1$ (since $v\not\in B'_{z'} \supseteq S' \supseteq N^{\geq s}_G(S) = N^{\geq s}_G(B_z)$), and for each edge $vw\in E_v$, $w$ is in $B_y$ for some non-parent ancestor $y$ of $x$ in $T'$. 
For every other node $x$ of $T$ and for each $v\in B_x$, let $E_v:= E'_v$, which has size at most $s-1$, and for each edge $vw\in E_v$, $w$ is in $B_y$ for some non-parent ancestor $y$ of $x$ in $T'$.	
Since $B_z\subseteq B'_{z'}$ and each of $z$ and $z'$ has no non-parent ancestor, $\TT:=((B_x)_{x\in V(T)},(E_v)_{v\in V(G)})$ is a clean $(s-1)$-quasi-$T$-partition of $G$. 

Now consider the degree of $\TT$. By construction, $\deg_T(z)=1 \leq \frac{|S|}{2k} - 1$ and $\deg_{T}(z') = \deg_{T'}(z')+1\leq (6c-1) + 1 = 6c$. Every other vertex in $T$ has the same degree as in $T'$. Hence $\Delta(T)\leq 6c$, as desired. 

Now consider the weight of $\TT$. Observe that $C_z=C_{z'}=\emptyset$. For each child $x$ of $z'$ in $T$, we have $C_x\subseteq B_z=S$ and thus $|C_x|\leq|S|\leq 12k-1$. 
For every other node $x$ of $T$, we have $C_x=C'_x$ and thus $|C_x|=|C'_x|\leq 12k-1$. Hence $\TT$ has weight at most $12k-1$.


By construction, for every $X \subseteq V(G)$, if there exists a vertical path in $T'$ passing through each node $x \in V(T')$ with $X \cap B'_x \neq \emptyset$, then we can extend this path to be a vertical path in $T$ from $z$ such that it passes through all nodes $x \in V(T)$ with $X \cap B_x \neq \emptyset$. Hence (2) is satisfied. 

Now we deal with the last case in the proof.

\textbf{Case 3.} $|V(G) \setminus S|\geq 18ck$ and $12 k \leq |S|\leq 12ck$: By the separator lemma of \citet[(2.6)]{RS-II}, there are induced subgraphs $G_1$ and $G_2$ of $G$ with $G_1\cup G_2=G$ and $|V(G_1\cap G_2)|\leq k$, where $|S\cap V(G_i) \setminus V(G_{3-i})|\leq \frac23 |S|$ for each $i\in\{1,2\}$. Let $S_i := (S\cap V(G_i))\cup V(G_1\cap G_2)$ for each $i\in\{1,2\}$.
	
We now bound $|S_i|$. For a lower bound, since $|S\cap V(G_1) \setminus V(G_2)|\leq \frac23 |S|$, we have $|S_2|\geq |S \cap V(G_2)|\geq \frac13 |S| \geq 4k $. By symmetry, $|S_1|\geq  4k $. For an upper bound, $|S_i|\leq\frac23 |S| + k \leq 8ck + k \leq 12ck$. Also note that $|S_1|+|S_2|\leq |S|+2k$. 
 
We have shown that $4k \leq |S_i|\leq 12ck$ for each $i\in\{1,2\}$. Thus, we may apply induction to $G_i$ with $S_i$ the specified set. 
Hence $G_i$ has a clean $(s-1)$-quasi-tree-partition $\TT^i:=(T_i,(B^i_x)_{x\in V(T_i)},(E^i_v)_{v\in V(G_i)})$ of width at most $18ck$, degree at most $6c$, and weight at most $12k-1$ such that:
 \begin{itemize}
		\item  if $z_i$ is the root of $T_i$ then $S_i\subseteq B^i_{z_i}$,  $|B^i_{z_i}|\leq \frac32|S_i|-2k$, and $\deg_{T_i}(z_i)\leq \frac{|S_i|}{2k} - 1$, and
		\item for every $X \subseteq V(G_i)$, if $X$ has at least $k+1$ common neighbours in $G_i$, then there is a vertical path in $T_i$ passing through each node $x \in V(T_i)$ with $X \cap B^i_x \neq \emptyset$.
	\end{itemize}
 Let $C^i_x$ be the load in $\TT^i$ of each node $x\in V(T_i)$, so $|C^i_x|\leq 12k-1$. 

Let $T$ be the tree obtained from the disjoint union of $T_1$ and $T_2$ by merging $z_1$ and $z_2$ into a node $z$. Consider $T$ to be rooted at $z$. Let $B_z:= B^1_{z_1}\cup B^2_{z_2}$. Let $B_x:= B^i_x$ for each $x\in V(T_i)\setminus\{z_i\}$. 

 Let $E_v:=E^i_v$ for each $v\in V(G_i)$. This is well-defined since $V(G_1 \cap G_2) \subseteq B^1_{z_1} \cap B^2_{z_2}$, implying $E^1_v=E^2_v=\emptyset$ for each $v\in V(G_1\cap G_2)$. 
 By construction, for each $v\in V(G)$, if $v\in B_x$ for some $x \in V(T)$, then $E_v$ is a set of at most $s-1$ edges of $G$ incident with $v$, and for each edge $vw\in E_v$, $w \in B_y$ for some non-parent ancestor $y$ of $x$ in $T$.
 Since $G=G_1\cup G_2$ and $V(G_1\cap G_2)\subseteq B^1_{z_1}\cap B^2_{z_2} \subseteq B_z$, we have that $\TT:=((B_x)_{x\in V(T)},(E_v)_{v\in V(G)})$ is a clean $(s-1)$-quasi-$T$-partition of $G$.  
 
 Consider the width of $\TT$. By construction, $S\subseteq B_z$ and since $V(G_1\cap G_2)\subseteq B^i_{z_i}$ for each $i \in [2]$, 
	\begin{align*}
		|B_z| 
		& \leq |B^1_{z_1}|+|B^2_{z_2}| - |V(G_1\cap G_2)|\\
		& \leq (\tfrac32|S_1|-2k) +  (\tfrac32|S_2|-2k) - |V(G_1\cap G_2)|\\
		& = \tfrac32( |S_1|+ |S_2|) -4k - |V(G_1\cap G_2)|\\
		& \leq \tfrac32( |S| + 2|V(G_1\cap G_2)| ) -4k - |V(G_1\cap G_2)|\\
		& \leq \tfrac32 |S| + 2|V(G_1\cap G_2)| -4k\\
		& \leq \tfrac32 |S| - 2 k\\
		& < 18 ck.
	\end{align*}
Every other part has the same size as in $\TT^1$ or $\TT^2$. 
So $|B_x|\leq 18ck$ for each $x\in V(T)$. 

 Now consider the degree of $\TT$.  Note that
	\begin{align*}
		\deg_T(z)   = \deg_{T_1}(z_1) + \deg_{T_2}(z_2)
		& \leq  (\tfrac{|S_1|}{2k}-1) + (\tfrac{|S_2|}{2k}-1)\\
		& =  \tfrac{|S_1|+|S_2|}{2k} -2\\
		& \leq  \tfrac{|S|+2k}{2k} -2\\
		& =  \tfrac{|S|}{2k} -1\\
		& < 6 c. 
	\end{align*}
	Every other node of $T$ has the same degree as in $T_1$ or $T_2$. 
	Thus $\Delta(T) \leq 6c$. 

Finally, consider the weight of $\TT$. Let $C_x$ be the load of each $x\in V(T)$. Since $z$ is the root, $C_z=\emptyset$. 
For each node $x\in V(T_i)\setminus\{z_i\}$, we have $C_x=C^i_x$, so $|C_x|\leq 12k-1$. Hence $\TT$ has weight at most $12k-1$.

Let $X$ be an arbitrary subset of $V(G)$ such that nodes in $X$ have at least $k+1$ common neighbours in $G$.
If $X \setminus V(G_1) \neq \emptyset \neq X \setminus V(G_2)$, then all of the at least $k+1$ common neighbours of $X$ are contained in $V(G_1 \cap G_2)$, which is a set with size at most $k$, a contradiction.
Hence there exists $i \in [2]$ such that $X \subseteq V(G_{i})$.
So every node $x \in V(T)$ with $X \cap B_x \neq \emptyset$ is a node of $T_{i}$ with $X \cap B^i_x \neq \emptyset$.
If $X$ has at least $k+1$ common neighbours in $G_{i}$, then there exists a vertical path in $T_{i}$ (and hence in $T$) passing through each node $x \in V(T)$ with $X \cap B_x \neq \emptyset$.
If $X$ does not have at least $k+1$ common neighbours in $G_{i}$, then at least one common neighbour of $X$ is contained in $V(G_{3-i}) \setminus V(G_{i})$, so $X \subseteq V(G_1 \cap G_2) \subseteq B_z$, so the path consisting of $z$ is a vertical path in $T$ passing through each node $x \in V(T)$ with $X \cap B_x \neq \emptyset$.
This completes the proof.
\end{proof}

The next result is a more precise version of  \cref{KstBasic}. It follows from \cref{noKstStructure} and \cref{rho-tw} since
 $\rho(G)\leq \tw(G)\leq k-1$, implying $c \leq tk^{s-1}$.

\begin{cor}
\label{noKstStructureTreewidthk}
For any $k,s,t\in\NN$, any $K^*_{s,t}$-subgraph-free graph $G$ with $\tw(G)\leq k-1$ has a clean $(s-1)$-quasi-tree-partition of width at most $18 t k^s$, degree at most $6 tk^{s-1}$, and weight at most $12k-1$. 
\end{cor}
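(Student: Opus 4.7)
The plan is to derive this corollary as a direct specialization of \cref{noKstStructure} combined with the structural bound $\rho(G) \le \tw(G)$ from \cref{rho-tw}. The quasi-tree-partition itself comes straight from \cref{noKstStructure}; all that remains is to simplify the parameter $c = c(s,t,\rho)$ into the cleaner quantity $tk^{s-1}$.

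First I would apply \cref{noKstStructure} to $G$ with the choice $S = \emptyset$. The hypothesis $|S| \le 12ck$ is satisfied vacuously, and the theorem produces a clean $(s-1)$-quasi-tree-partition of width at most $18ck$, degree at most $6c$, and weight at most $12k-1$, where $c = c(s,t,\rho(G))$ in the notation of \cref{pi}. (Property~(1) of the theorem is vacuous when $S = \emptyset$, and Property~(2) is not needed here.)

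The second step is to bound $c$ by $tk^{s-1}$ using $\rho(G) \le \tw(G) \le k-1$, which is exactly \cref{rho-tw}. For $s = 1$ the definition in \cref{pi} gives $c = t = tk^{s-1}$ on the nose. For $s \ge 2$ the definition gives
\[
c \;=\; 1 + \rho(G) + (t-1)\tbinom{\rho(G)}{s-1} \;\le\; k + (t-1)\tbinom{k-1}{s-1},
\]
and the standard estimate $\binom{k-1}{s-1} \le k^{s-1}$, combined with the inequality $k \le tk^{s-1}$ (which holds for $s\ge 2$ and $k,t\ge 1$ since then $k^{s-1}\ge 1$ and $t\ge 1$), yields $c \le tk^{s-1}$. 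Substituting into the conclusion of \cref{noKstStructure} gives width at most $18tk^s$, degree at most $6tk^{s-1}$, and weight at most $12k-1$, exactly as required.

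There is essentially no obstacle beyond the routine binomial arithmetic, since all the structural content lies in \cref{noKstStructure}. The only thing to verify carefully is the $s\ge 2$ case of the bound $c \le tk^{s-1}$, where one must check that the additive $1 + \rho(G)$ term does not spoil the inequality; this is handled by absorbing $k$ into $tk^{s-1}$ using $s\ge 2$.
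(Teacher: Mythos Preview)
Your approach is identical to the paper's: apply \cref{noKstStructure} with $S=\emptyset$, invoke \cref{rho-tw} to get $\rho(G)\le k-1$, and then simplify $c\le tk^{s-1}$. One small slip in the arithmetic: to pass from $k+(t-1)k^{s-1}$ to $tk^{s-1}$ you need the sharper inequality $k\le k^{s-1}$ (valid for $s\ge 2$, $k\ge 1$), not merely $k\le tk^{s-1}$, but this is equally trivial.
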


\cref{noKstStructureTreewidthk} with $s=1$  recovers \cref{TPW} for tree-partitions of graphs with given tree-width and maximum degree, and also recovers the degree bound in \citep{DW24,DW22a}. 

We now give more examples of \cref{Kst-rho}. First consider $K_t$-minor-free graphs $G$. Here $G$ is $K_{t-1,t-1}$-subgraph-free, since contracting a matching in $K_{t-1,t-1}$ with size $t-2$ gives $K_t$. 
It follows from a result of \citet{Kostochka82,Kostochka84} and \citet{Thomason84,Thomason00} that $\rho(G)\in O(t\sqrt{\log t})$. And  \citet{AST-SJDM94} showed that $\tw(G)\leq t^{3/2} |V(G)|^{1/2}$. 
The next result thus follows directly from the $S=\emptyset$ case of \cref{noKstStructure}.

\begin{cor}
For any $t\in\NN$ there exists $c_1,\dots,c_5\in\NN$ such that every $K_t$-minor-free graph $G$ has a clean $(t-2)$-quasi-tree-partition of width at most $c_1\tw(G) \leq c_2 |V(G)|^{1/2}$, degree at most $c_3$, and weight at most $c_4 \tw(G) \leq c_5 |V(G)|^{1/2} $.
\end{cor}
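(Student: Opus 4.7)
The plan is to verify the three ingredients needed to apply the $S=\emptyset$ case of \cref{noKstStructure} with $s := t-1$, and then to substitute the Alon--Seymour--Thomas tree-width bound to convert $\tw(G)$ into an $|V(G)|^{1/2}$ bound.

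First, I would show that $G$ is $K^*_{t-1,t-1}$-subgraph-free. Consider $K_{t-1,t-1}$ with sides $\{a_1,\dots,a_{t-1}\}$ and $\{b_1,\dots,b_{t-1}\}$, and contract the matching $\{a_ib_i : 1\leq i\leq t-2\}$. The resulting graph has $t$ vertices (the $t-2$ contracted pairs together with $a_{t-1}$ and $b_{t-1}$), and every two of them are adjacent, so $K_t$ is a minor of $K_{t-1,t-1}$. Hence every $K_t$-minor-free graph is $K_{t-1,t-1}$-subgraph-free, and since $K_{t-1,t-1}$ is a subgraph of $K^*_{t-1,t-1}$, $G$ is also $K^*_{t-1,t-1}$-subgraph-free.

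Second, I would bound $\rho(G)$ by a function of $t$ alone. If $H$ is any graph whose $1$-subdivision is a subgraph of $G$, then $H$ is itself a minor of $G$ (contract one internal subdivision vertex into its neighbour on each subdivided edge), and thus $H$ is $K_t$-minor-free. By the theorem of Kostochka and Thomason, $\delta(H) \leq \degen(H) \leq c_0 \, t \sqrt{\log t}$ for an absolute constant $c_0$. Taking the maximum over all such $H$ gives $\rho(G) \leq \rho_0$ for some $\rho_0 = \rho_0(t) = O(t\sqrt{\log t})$.

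Now apply \cref{noKstStructure} with $s := t-1$, the parameter~$t$ in \cref{noKstStructure} replaced by $t-1$, $\rho := \rho_0$, $k := \tw(G)+1$, and $S := \emptyset$. This yields a clean $(t-2)$-quasi-tree-partition of $G$ of width at most $18 c (\tw(G)+1)$, degree at most $6c$, and weight at most $12(\tw(G)+1)-1$, where $c := c(t-1,t-1,\rho_0)$ depends only on $t$. Setting $c_1,c_3,c_4$ accordingly gives the linear-in-$\tw(G)$ bounds. Finally, the Alon--Seymour--Thomas separator theorem gives $\tw(G) \leq t^{3/2}|V(G)|^{1/2}$, which delivers the $|V(G)|^{1/2}$ bounds defining $c_2$ and $c_5$. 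There is no genuine obstacle in the proof; the only subtle point is the minor calculation showing $K_{t-1,t-1}$ contains $K_t$ as a minor, and the rest is a direct assembly of the Kostochka--Thomason degeneracy bound, the Alon--Seymour--Thomas separator bound, and \cref{noKstStructure}.
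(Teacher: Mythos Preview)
Your proposal is correct and follows essentially the same route as the paper: the paper also shows $K_{t-1,t-1}$-subgraph-freeness via the matching contraction, invokes Kostochka--Thomason for $\rho(G)\in O(t\sqrt{\log t})$, applies the $S=\emptyset$ case of \cref{noKstStructure} with $s=t-1$, and then uses the Alon--Seymour--Thomas bound $\tw(G)\leq t^{3/2}|V(G)|^{1/2}$.
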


\cref{Kst-rho} is also applicable and interesting for non-minor-closed classes. 
The following definitions by \citet{DMW17} are useful for this purpose. 
A \defn{layering} of a graph $G$ is an ordered partition $(V_1,V_2,\dots)$ of $V(G)$ such that for each edge $vw\in E(G)$, if $v\in V_i$ and $w\in V_j$ then $|i-j|\leq 1$. The \defn{layered tree-width $\ltw(G)$} of a graph $G$ is the minimum $k\in\NN$ such that $G$ has a layering $(V_1,V_2,\dots)$ and a tree-decomposition $(T,(B_x)_{x\in V(T)})$ such that $|V_i\cap B_x|\leq k$ for each $i\in\NN$ and $x\in V(T)$. For example, \citet{DMW17} proved that every planar graph has layered tree-width at most 3; more generally, every graph with Euler genus $g$ has layered tree-width at most $2g+3$; and most generally, a minor-closed class $\GG$ has bounded layered tree-width if and only if some apex graph is not in $\GG$. \citet[Lemma~8]{DMW17} noted in their proof that for every graph $G$, 
\begin{align}
    \label{delta-ltw}
    \delta(G)\leq 3\ltw(G)-1.
\end{align}
The next lemma is proved using an idea from \citep[Lemma~9]{DMW17}.

\begin{lem}
\label{ltw-rho}
For every graph $G$, $$\rho(G)\leq 6\ltw(G)-1.$$
\end{lem}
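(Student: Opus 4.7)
The plan is to show that $\ltw(H) \le 2\ltw(G)$ whenever $H$ is a graph whose $1$-subdivision $H'$ is a subgraph of $G$. Applying \eqref{delta-ltw} to $H$ then yields $\delta(H) \le 3\ltw(H)-1 \le 6\ltw(G)-1$, and since $\rho(G)$ is the maximum of $\delta(H)$ over all such $H$, the lemma follows.

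Fix a tree-decomposition $(T,(B_y)_{y\in V(T)})$ and a layering $(V_i)_{i\ge 1}$ of $G$ witnessing $\ltw(G) \le \ell := \ltw(G)$, so $|V_i \cap B_y| \le \ell$ for all $i$ and $y$. Writing $p_u$ for the index of the layer of $u \in V(G)$, every edge $vw \in E(H)$ corresponds to a length-$2$ path $v$-$x_{vw}$-$w$ in $H' \subseteq G$ whose three vertices span three consecutive $G$-layers, so $|p_v - p_w| \le 2$. Define a layering of $H$ by pairing consecutive layers of $G$: $W_j := V(H) \cap (V_{2j-1} \cup V_{2j})$ for $j \ge 1$. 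The edges of $H$ then span at most one $W$-layer, so $(W_j)$ is a valid layering of $H$.

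For the tree-decomposition of $H$, exploit that $H$ is a minor of $G$: for each $e = vw \in E(H)$, contract the edge of $H'$ from $x_e$ to a chosen endpoint $v(e) \in \{v,w\}$ (made consistently with the layering, e.g., choose $v(e)$ to be the endpoint whose $G$-layer index matches $p_{x_e}$ whenever possible, breaking ties deterministically). The minor-derived tree-decomposition uses the same tree $T$ with bags $B^*_y := (V(H) \cap B_y) \cup \{v(e) : x_e \in B_y\}$; the set $\{y : v \in B^*_y\} = T_v \cup \bigcup_{e:\,v(e)=v} T_{x_e}$ is connected because each $T_{x_e}$ intersects $T_v$, and each edge $vw \in E(H)$ is covered at any node in the nonempty set $T_{x_e} \cap T_w$, so this is a valid tree-decomposition of $H$.

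The main step (and the main obstacle) is to show $|B^*_y \cap W_j| \le 2\ell$ for every $y$ and $j$. The branch-vertex portion $(V(H) \cap B_y) \cap W_j$ trivially contributes at most $|B_y \cap V_{2j-1}| + |B_y \cap V_{2j}| \le 2\ell$. A naive choice of contraction direction adds as much as $4\ell$ further from subdivision images, giving only $\ltw(H) \le 6\ell$; the crux is to choose $v(e)$ so that whenever $x_e \in B_y$ with $v(e) \in W_j$, the image $v(e)$ already lies in $(V(H) \cap B_y) \cap W_j$ (i.e., in the portion already counted), rather than producing a new element of $B^*_y \cap W_j$. This is where the idea of \citep[Lemma~9]{DMW17} enters: the coordination of the contraction choice with the layer structure, together with the observation that $p_{v(e)}$ and $p_{x_e}$ differ by at most one, allows the subdivision contribution to be absorbed into the branch-vertex count, yielding the tight bound $|B^*_y \cap W_j| \le 2\ell$ and hence $\ltw(H) \le 2\ltw(G)$.
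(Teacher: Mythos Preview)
Your plan is exactly the paper's: show $\ltw(H)\le 2\ltw(G)$ by replacing each subdivision vertex by one of its endpoints in the tree-decomposition and pairing consecutive layers, then apply \eqref{delta-ltw}. One simplification the paper makes that you miss is to pass immediately to a layering and tree-decomposition of $H'$ rather than of $G$ (valid since $H'\subseteq G$ and $\ltw$ is monotone under subgraphs); then every vertex of every bag is already a branch vertex or a subdivision vertex, so there is no need to separate out a ``branch-vertex portion'' from unrelated vertices of $G$.

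The real problem is your final paragraph, which is a hope rather than a proof. You assert that $v(e)$ can be chosen so that whenever $x_e\in B_y$ and $v(e)\in W_j$, the vertex $v(e)$ already lies in $(V(H)\cap B_y)\cap W_j$; in particular this demands $v(e)\in B_y$. But $v(e)$ is a single global choice of endpoint that must work for \emph{every} bag containing $x_e$, i.e.\ you are asking that $T_{x_e}\subseteq T_{v(e)}$. This fails already for the two-bag decomposition $\{v,x_e\},\{x_e,w\}$ of the path $v\,x_e\,w$: whichever endpoint you pick, the other bag violates the condition. So the ``absorption into the branch-vertex count'' you describe is impossible in the form stated, and invoking \citep[Lemma~9]{DMW17} without saying what the idea actually is does not close the gap. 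The paper never makes this per-bag demand: it only uses that the replacement $x\mapsto v$ is a surjection $B_y\to B'_y$ moving each vertex by at most one $V$-layer, and then pairs layers. You should carry out that counting explicitly rather than trying to force $v(e)\in B_y$.
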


\begin{proof}
Let $k:=\ltw(G)$. Say $H$ is a graph such that the 1-subdivision $H'$ of $H$ is a subgraph of $G$. So $\ltw(H')\leq k$. Consider a layering and tree-decomposition of $H'$ such that each layer has at most $k$ vertices in each bag. 
For each vertex $x$ of $H'$ obtained by subdividing an edge $vw$ of $H$, replace each instance of $x$ in the tree-decomposition of $H'$ by $v$. We obtain a tree-decomposition of $H$ without increasing the bag size. In the layering of $H'$, group pairs of consecutive layers to produce a layering of $H$. 
Hence, $\ltw(H)\leq 2k$, and $\delta(H)\leq 3 \cdot 2k -1=6k-1$. 
This says that $\rho(G)\leq 6k-1$. 
\end{proof}

Several non-minor-closed graph  classes are known to have bounded layered tree-width \citep{DMW17,DEW17,HKW}. Here is one example. For $g,k\in\NN_0$, a graph is \defn{$(g,k)$-planar} if it has a drawing in a surface of Euler genus at most $g$ such that each edge contains at most $k$ crossings. 
Every $(g,k)$-planar graph $G$ has layered tree-width at most $2(2g+3)(k+1)$ \citep{DEW17}, and thus $\rho(G)\leq 12(2g+3)(k+1)-1$ by \cref{ltw-rho}. 
Also, $G$ is $K_{3,(24k+1)(2g+2)+13}$-subgraph-free~\citep{HW22} and  
$\tw(G)\in O(\sqrt{(g+1)(k+1)|V(G)|})$~\citep{DEW17}. 
Thus the $S=\emptyset$ case of \cref{noKstStructure} with $s=3$ implies:

\begin{cor}
\label{gkPlanarStructure}
There exists $c_1,\dots,c_5\in\NN$ such that for any $g,k\in\NN_0$ every $(g,k)$-planar graph $G$ has a clean $2$-quasi-tree-partition with:
\begin{itemize}
    \item width at most $c_1 (g+1)^3(k+1)^3 \tw(G) \leq c_2 (g+1)^{7/2}(k+1)^{7/2} |V(G)|^{1/2}$, 
    \item degree at most $c_3 (g+1)^3(k+1)^3$, and \item weight at most $c_4 \tw(G) \leq c_5 (g+1)^{1/2}(k+1)^{1/2}|V(G)|^{1/2}$.
\end{itemize}
\end{cor}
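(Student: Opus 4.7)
The plan is to apply \cref{noKstStructure} with $s=3$ and $S=\emptyset$ using three ingredients already collected for $(g,k)$-planar graphs in the paragraph preceding the corollary, and then to convert the resulting bounds into the two forms stated.

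First I would record the three facts: every $(g,k)$-planar graph $G$ satisfies (i) $\ltw(G) \leq 2(2g+3)(k+1)$ and (ii) $\tw(G) \in O(\sqrt{(g+1)(k+1)|V(G)|})$, both from \citep{DEW17}, and (iii) $G$ is $K_{3,t_0}$-subgraph-free with $t_0 := (24k+1)(2g+2)+13 = O((g+1)(k+1))$ from \citep{HW22}. Combining (i) with \cref{ltw-rho} gives $\rho(G) \leq 12(2g+3)(k+1)-1 = O((g+1)(k+1))$. Since $K_{3,t_0}^*$ contains $K_{3,t_0}$, the graph $G$ is also $K_{3,t_0}^*$-subgraph-free, so the hypothesis of \cref{noKstStructure} is satisfied.

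Next I would evaluate the constant $c=c(3,t_0,\rho(G))$ from \cref{pi}: for $s=3$ the formula gives $c = 1 + \rho(G) + (t_0-1)\binom{\rho(G)}{2} = O(t_0\,\rho(G)^2) = O((g+1)^3(k+1)^3)$. Applying \cref{noKstStructure} with $s=3$, $t=t_0$, $\rho=\rho(G)$, $S=\emptyset$, and the theorem's tree-width parameter equal to $\tw(G)+1$ directly produces a clean $2$-quasi-tree-partition of $G$ with width at most $18c(\tw(G)+1) = O((g+1)^3(k+1)^3\,\tw(G))$, degree at most $6c = O((g+1)^3(k+1)^3)$, and weight at most $12(\tw(G)+1)-1 = O(\tw(G))$. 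Substituting the bound (ii) into the width and weight estimates yields the advertised $|V(G)|^{1/2}$ forms $O((g+1)^{7/2}(k+1)^{7/2}|V(G)|^{1/2})$ and $O((g+1)^{1/2}(k+1)^{1/2}|V(G)|^{1/2})$ respectively.

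There is no genuine obstacle, as the corollary is essentially a substitution into an already-proved master theorem. The only care required is to keep the direction of the $K_{s,t}$ versus $K_{s,t}^*$ inclusion straight, namely that being $K_{s,t}$-subgraph-free is the stronger condition and implies being $K_{s,t}^*$-subgraph-free, which is what the hypothesis of \cref{noKstStructure} demands. One should also avoid conflating the two uses of the letter $k$ in the statement: the crossing parameter of the $(g,k)$-planar assumption and the tree-width parameter in \cref{noKstStructure}, which here plays the role of $\tw(G)+1$.
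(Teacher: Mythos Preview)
Your proposal is correct and follows essentially the same approach as the paper: the corollary is obtained by plugging the three cited facts about $(g,k)$-planar graphs (the $K_{3,t_0}$-subgraph-freeness, the $\rho$ bound via layered tree-width and \cref{ltw-rho}, and the tree-width bound) into the $S=\emptyset$ case of \cref{noKstStructure} with $s=3$. Your care in distinguishing the crossing parameter $k$ from the tree-width parameter of \cref{noKstStructure}, and in checking the $K_{s,t}$ versus $K^*_{s,t}$ direction, is well placed.
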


Note that \cref{SurfaceCorllary} is the special case $k=0$ of \cref{gkPlanarStructure} (with lightly worse dependence on $g$)

\section{Defective and Clustered Colouring}
\label{ColouringI}

This section presents applications of our structural results from the previous section for graph colouring. 

A \defn{colouring} of a graph $G$ is simply a function $f:V(G)\to\mathcal{C}$ for some set $\mathcal{C}$ whose elements are called \defn{colours}. If $|\mathcal{C}| \leq k$ then $f$ is a \defn{$k$-colouring}. An edge $vw$ of $G$ is \defn{$f$-monochromatic} if $f(v)=f(w)$. A colouring $f$ is \defn{proper} if no edge is $f$-monochromatic. An \defn{$f$-monochromatic component}, sometimes called a \defn{monochromatic component}, is a connected component of the subgraph of $G$ induced by $\{v\in V(G):f(v)=\alpha\}$ for some  colour $\alpha\in \mathcal{C}$. We say $f$ has \defn{clustering} $c$ if every $f$-monochromatic component has at most $c$ vertices. The \defn{$f$-monochromatic degree} of a vertex $v$ is the degree of $v$ in the monochromatic component containing $v$. Then $f$ has \defn{defect} $d$ if every $f$-monochromatic component has maximum degree at most $d$ (that is, each vertex has monochromatic degree at most $d$). 

The \defn{clustered chromatic number $\cchi(\GG)$} of a graph class $\GG$ is the infimum of the set of nonnegative integers $k$ such that for some $c\in\NN$ every graph in $\GG$ has a $k$-colouring with clustering $c$. The \defn{defective chromatic number $\dchi(\GG)$} of a graph class $\GG$ is the infimum of the set of nonnegative integers $k$ such that for some $d\in\NN_0$ every graph in $\GG$ has a $k$-colouring with defect $d$. Every colouring of a graph with clustering $c$ has defect $c-1$. Thus $\dchi(\GG)\leq \cchi(\GG) \leq\bigchi(\GG)$ for every class $\GG$, where \defn{$\bigchi(\GG)$} is the infimum of the set of nonnegative integers $k$ such that every graph in $\GG$ has a proper $k$-colouring.

Clustered and defective colouring have recently been widely studied~\citep{DEMW23,EKKOS15,Wood10,KM07,NSSW19,vdHW18,CL25,KO19,CE19,CLO18,EJ14,EO16,DN17,LO18,HW19,MRW17,LW1,LW2,LW3,LW4,Liu24,NSW22,DS20,EW23,Liu23}; see \citep{WoodSurvey} for a survey. 

\subsection{List Colouring}
\label{ListColouring}

A \defn{list-assignment} for a graph $G$ is a function $L$ that assigns a set $L(v)$ of colours to each vertex $v\in V(G)$. A graph $G$ is \defn{$L$-colourable} if there is a proper colouring of $G$ such that  each vertex $v\in V(G)$ is assigned a colour in $L(v)$.  A list-assignment $L$ is a \defn{$k$-list assignment} if $|L(v)|\geq k$ for each vertex $v\in V(G)$. The \defn{list-chromatic-number $\lchi(G)$} of a graph $G$ is the minimum $k\in\NN_0$ such that $G$ is $L$-colourable for every $k$-list-assignment $L$ of $G$.

For a list-assignment $L$ of a graph $G$ and $d\in\NN_0$, define $G$ to be \defn{$L$-colourable with defect $d$} if there is a  colouring of $G$ with defect $d$ such that  each vertex $v\in V(G)$ is assigned a colour in $L(v)$.  Define $G$ to be \defn{$k$-list-colourable with defect $d$} if $G$ is $L$-colourable with defect $d$ for every $k$-list assignment $L$ of $G$. Similarly, for $c\in\NN$, $G$ is \defn{$L$-colourable with clustering $c$} if there is a  colouring of $G$ with clustering $c$ such that each vertex $v\in V(G)$ is assigned a colour in $L(v)$.  Define $G$ to be \defn{$k$-list-colourable with clustering $c$} if $G$ is $L$-colourable with clustering $c$ for every $k$-list assignment $L$ of $G$. 

The \defn{defective list-chromatic-number} of a graph class $\GG$, denoted by $\ldchi(\GG)$, is the infimum $k\in\NN$ such that for some $d\in\NN$ every graph in $\GG$ is $k$-list-colourable with defect $d$. 
The \defn{clustered list-chromatic-number $\lcchi(\GG)$} of a graph class $\GG$ is the infimum $k\in\NN$ such that for some $c\in\NN$ every graph in $\GG$ is $k$-list-colourable with clustering $c$. 

\citet{OOW19} proved the following result about defective colouring and sparsity. For a graph $G$, let \defn{$\nabla(G)$} be the maximum of $\frac{|E(H)|}{|V(H)|}$, taken over all graphs $H$ such that some $(\leq 1)$-subdivision of $H$ is a subgraph of $G$. (Note that 
$\nabla(G)\leq \rho(G)\leq 2\nabla(G)$ since $\frac{|E(H)|}{|V(H)|} \leq \delta(H) \leq
\frac{2|E(H)|}{|V(H)|}$.)

\begin{thm}[\citep{OOW19}]
\label{OOW}
For any $s,t\in\NN$ and $\nabla\in\mathbb{R}_{>0}$ there exists $d\in\NN$ such that every $K^*_{s,t}$-subgraph-free graph $G$ with $\nabla(G)\leq\nabla$ is $s$-list-colourable with defect $d$.
\end{thm}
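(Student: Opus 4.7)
The plan is a minimisation argument combined with the structural bound of \cref{pi}. Fix $s,t\in\NN$ and $\nabla\in\RR_{>0}$, and let $d:=d(s,t,\nabla)$ be a constant to be chosen. Let $G$ be $K^*_{s,t}$-subgraph-free with $\nabla(G)\leq\nabla$, and let $L$ be an $s$-list-assignment for $G$. I would consider an $L$-colouring $f$ of $G$ that minimises the number $M(f)$ of monochromatic edges; such a colouring exists since the space of $L$-colourings is nonempty and finite. The goal is to show that $f$ has defect at most $d$.

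The core observation is a standard swap argument: if some vertex $v$ has monochromatic degree greater than $d$ under $f$, then for every $\beta\in L(v)$, recolouring $v$ to $\beta$ cannot decrease $M(f)$, which forces $v$ to have at least $d+1$ neighbours of colour $\beta$. Hence every \emph{bad} vertex (monochromatic degree $>d$) has at least $s(d+1)$ coloured neighbours, with at least $d+1$ in each of its $s$ list colours. In particular, each bad $v$ is a common neighbour of many \emph{witness} $s$-sets $X\subseteq N_G(v)$ hitting each colour of $L(v)$ exactly once.

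I would then derive a contradiction by double counting. Let $B$ be the set of bad vertices. Choosing one witness $s$-set $X_v$ per bad vertex $v$, each $s$-set $X$ is the witness of at most $|N^{\geq s}_G(X)|\leq(c-1)s$ bad vertices by \cref{pi}, where $c=c(s,t,\lceil 2\nabla\rceil)$ and we use $\rho(G)\leq 2\nabla(G)$. To leverage the density bound $\nabla(G)\leq\nabla$, I would construct an auxiliary simple graph $H$ whose vertex set is contained in $V(G)\setminus B$ and whose edges encode the witness structure, arranged so that a $({\leq}1)$-subdivision of $H$ embeds into $G$ using bad vertices as subdivision vertices. Then the inequality $|E(H)|\leq\nabla|V(H)|$, combined with a lower bound of the form $|E(H)|\geq |B|/\mathrm{poly}(s,t,\nabla)$, forces $|B|=0$ once $d$ is large enough.

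The main obstacle is producing such an auxiliary graph with a uniform lower bound on $|E(H)|$ in terms of $|B|$ alone. Naïve witness counting inflates the bound by a factor polynomial in $|V(G)|$, because too many witness $s$-sets are available per bad vertex. To eliminate this dependence, I expect one must restrict the witness selection carefully---for instance by constructing a fraternal-augmentation-style auxiliary graph in which pairs of vertices with many common coloured neighbours become edges of a bounded-density graph, iterated a bounded number of times---so that the $X_v$'s for different bad vertices overlap in a controlled way. This quantitative step is the delicate heart of the proof, and is where one genuinely exploits $\nabla(G)\leq\nabla$ beyond the elementary $2\nabla$-degeneracy that it implies; the final $d$ should then come out as a polynomial in $s$, $t$, and $\nabla$ whose degree reflects the number of augmentation steps.
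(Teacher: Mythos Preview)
The paper does not prove \cref{OOW}; it is quoted from \citep{OOW19} as a known result and used only as a black box to derive \cref{OOWcor} and \cref{SSS-Kst}. There is therefore no proof in this paper to compare your attempt against.

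Regarding your sketch on its own merits: the minimisation-and-swap opening is standard and correct, and your instinct that the contradiction should come from an auxiliary graph $H$ with a $(\le 1)$-subdivision in $G$ and $|E(H)|/|V(H)|>\nabla$ is the right shape (and is indeed how the argument in \citep{OOW19} proceeds). But you explicitly leave the key step undone: you note that na\"ive witness counting loses a factor of $|V(G)|$ and then only \emph{expect} that a fraternal-augmentation-style construction repairs this. That is exactly the substantive content of the theorem, and until it is carried out what you have is a plan rather than a proof. Your use of \cref{pi} together with $\rho(G)\le 2\nabla(G)$ is fine, but those ingredients by themselves do not close the gap you yourself identify.
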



A \defn{balanced separator} in a graph $G$ is a set $X\subseteq V(G)$ such that every component of $G-X$ has at most $\frac{|V(G)|}{2}$ vertices. 
For a function $f:\NN\to\mathbb{R}_{\geq 0}$, a graph $G$ \defn{admits $f$-separators} if every induced subgraph $H$ of $G$ has a balanced separator in $H$ of size at most $f(|V(H)|)$. 
A graph class $\GG$ \defn{admits strongly sublinear separators} if there exists a function $f$ with $f(n)\in O(n^{\beta})$ for some fixed $\beta\in[0,1)$ such that every graph in $\GG$ admits $f$-separators. 
If $\GG$ admits strongly sublinear separators, then $\GG$ has bounded $\nabla$  \citep{DN16,ER18}.
\cref{OOW} thus implies:

\begin{cor}
\label{OOWcor}
For every monotone graph class $\GG$ admitting strongly sublinear separators and with  $K^*_{s,t}\not\in\GG$, 
$$\dchi(\GG) \leq \ldchi(\GG) \leq s.$$
\end{cor}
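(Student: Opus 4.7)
The plan is to assemble the statement by combining three ingredients: the hypothesis of monotonicity (so that the forbidden subgraph condition propagates to all members of $\GG$), the fact that strongly sublinear separators force bounded $\nabla$, and \cref{OOW} which then delivers defective list-colourings with a bounded number of colours and bounded defect.

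First I would establish $\dchi(\GG) \leq \ldchi(\GG)$. This inequality is free: if every graph in $\GG$ is $s$-list-colourable with defect $d$ for some $d$, then, taking the constant list assignment $L(v) := \{1,2,\dots,s\}$ for every vertex $v$, we obtain a (non-list) $s$-colouring with defect $d$, so $\dchi(\GG) \leq s \leq \ldchi(\GG)$.

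Next I would prove the main inequality $\ldchi(\GG) \leq s$. Since $\GG$ is monotone and $K^*_{s,t} \notin \GG$, every subgraph of every graph in $\GG$ avoids $K^*_{s,t}$; in particular every $G \in \GG$ is $K^*_{s,t}$-subgraph-free. Because $\GG$ admits strongly sublinear separators, the results of \citet{DN16} and \citet{ER18} cited in the paragraph just above \cref{OOWcor} yield a constant $\nabla$ (depending on the class only) such that $\nabla(G) \leq \nabla$ for every $G \in \GG$. Now apply \cref{OOW} with parameters $s$, $t$, $\nabla$ to obtain a constant $d = d(s,t,\nabla) \in \NN$ such that every $K^*_{s,t}$-subgraph-free graph $H$ with $\nabla(H) \leq \nabla$ is $s$-list-colourable with defect $d$. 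Every $G \in \GG$ satisfies both hypotheses, so every $G \in \GG$ is $s$-list-colourable with defect $d$, giving $\ldchi(\GG) \leq s$ as required.

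There is no genuine obstacle here; the only subtlety is bookkeeping the hypotheses correctly. Monotonicity is used solely to pass from ``$K^*_{s,t} \notin \GG$'' to ``every $G \in \GG$ is $K^*_{s,t}$-subgraph-free'' (without monotonicity, $\GG$ could contain a graph properly containing $K^*_{s,t}$ as a subgraph). Strongly sublinear separators are used solely to produce a uniform bound on $\nabla$ via the cited results. Both constants $\nabla$ and $d$ depend only on $\GG$, $s$, $t$, which is precisely what is needed for the definition of $\ldchi(\GG)$.
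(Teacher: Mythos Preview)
Your approach is correct and matches the paper's, which simply notes that the corollary follows from \cref{OOW} together with the cited fact that strongly sublinear separators force bounded $\nabla$. The only slip is in your first paragraph: the chain ``$\dchi(\GG) \leq s \leq \ldchi(\GG)$'' has the second inequality the wrong way round (and reusing the letter $s$ clashes with the fixed parameter of $K^*_{s,t}$); the intended conclusion is just $\dchi(\GG) \leq k$ for $k=\ldchi(\GG)$.
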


The authors proved the following result in a previous paper. 

\begin{thm}[\citep{LW2}]
\label{TreewidthKst}
For $k,s,t\in\NN$ with $t\geq s$, there exists $c\in\NN$ such that every $K_{s,t}$-subgraph-free graph with tree-width at most $k$ is $(s+1)$-list-colourable with clustering $c$.
\end{thm}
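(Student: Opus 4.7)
The plan is to apply \cref{KstBasic} from this paper to obtain a structural decomposition of $G$, and then build a clustered list-colouring by processing the decomposition top-down. Specifically, starting from a $K_{s,t}$-subgraph-free graph $G$ of tree-width at most $k$, \cref{KstBasic} provides a clean $(s-1)$-quasi-tree-partition $\TT = (T, (B_x)_{x \in V(T)}, (E_v)_{v \in V(G)})$ of width $W = O(tk^s)$ and degree $D = O(tk^{s-1})$. Root $T$ at an arbitrary node $r$.

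Given an $(s+1)$-list-assignment $L$ of $G$, I would construct a colouring $c$ by processing the bags of $T$ in BFS order from $r$. When colouring a vertex $v \in B_x$, all ancestors of $x$ in $T$ have already been processed. Let $F(v) := \{c(w) : vw \in E_v\}$ be the set of colours used by $v$'s already-coloured ancestral neighbours; the $(s-1)$-quasi property of $\TT$ implies $|F(v)| \leq s - 1$, so the reduced list $L'(v) := L(v) \setminus F(v)$ has at least $2$ colours. Any choice $c(v) \in L'(v)$ automatically avoids making any edge in $E_v$ monochromatic. Among the colours in $L'(v)$, I would pick the one that minimises the size of the monochromatic component $v$ would join (considering already-coloured vertices in $B_x$ and the parent bag of $x$). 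Since edges in $\bigcup_v E_v$ are handled by the list restriction, it remains to control monochromatic components in the tree-partition graph $G' := G - \bigcup_v E_v$, which has tree-partition-width at most $W$.

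The main obstacle is analysing this greedy rule to bound the clustering. Any monochromatic component $M$ of the final colouring lies in the vertices of bags forming a connected subtree $T_M \subseteq T$ (since $G'$ is a tree-partition), so $|M| \leq W \cdot |V(T_M)|$. The challenge is bounding $|V(T_M)|$ by a function of $W$ and $D$. I would argue via a potential function: for a monochromatic chain to extend from a bag $B_x$ to a descendant bag $B_y$, every vertex along the way must choose the chain's colour despite having at least $2$ options. Once the chain exceeds a threshold depending on $W$, the greedy rule at the next vertex will prefer the alternative colour in $L'(v)$ (since joining the large chain is worse than any locally created component), which forces a break. Combined with $|B_x| \leq W$ and $\Delta(T) \leq D$, this should yield a clustering bound $c = c(k,s,t)$. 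The technical content lies in making the potential argument rigorous, in particular handling the interplay between different colour classes and the possibility that several branches of a monochromatic component meet within a single bag; the structural constraints provided by $\TT$ (bounded bag size, bounded tree-degree, and the $(s-1)$-quasi property) are what make this tractable.
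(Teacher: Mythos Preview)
Your overall architecture matches the paper's: apply the structural theorem to obtain a clean $(s-1)$-quasi-tree-partition of bounded width and degree, process the bags in BFS order, spend $s-1$ of the $s+1$ list colours to avoid the far endpoints of edges in $E_v$, and use the remaining freedom to control clustering in the tree-partition $G'=G-\bigcup_v E_v$. That much is correct and is exactly what the paper does.

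The gap is in your greedy rule and its analysis. You propose that $v$ choose the colour in $L'(v)$ minimising the size of the monochromatic component it would join, and you argue that ``once the chain exceeds a threshold depending on $W$, the greedy rule at the next vertex will prefer the alternative colour\ldots since joining the large chain is worse than any locally created component''. This reasoning fails because the alternative colour need not lead to a small, locally created component: it may be another chain of comparable size. Take $s=1$, a path-shaped $T$ with every bag of size $3$, every list equal to $\{1,2\}$, and adversarial tie-breaking. After $B_1$ produces components of sizes $2$ and $1$, the two colour classes keep alternately catching up to each other; in each subsequent bag both available colours lead to components whose sizes differ by at most one, and both components grow without bound. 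No threshold is ever triggered, so the clustering is not bounded.

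The paper sidesteps this by using \emph{age} rather than size. After first adding edges so that $B_x\cup B_y$ is a clique for every tree edge $xy$ (which guarantees that colouring a new vertex never merges two existing components), each vertex avoids the colour of the currently \emph{oldest} adjacent monochromatic component. The potential is then the age-rank of a component among those meeting a bag: it starts at most $\ell W$, and it strictly decreases when passing from a bag to a child bag, because the oldest component is blocked from extending. This bounds the depth of $T_M$ by $\ell W$, and hence the clustering by roughly $W\cdot D^{\ell W}$. Your size-based rule has no stable analogue of this rank, since sizes change as you colour; to repair your argument you would need to replace ``minimum size'' by some quantity, like birth time, that is fixed once a component is created.
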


The proof of \cref{TreewidthKst} also works for $K^*_{s,t}$-subgraph-free graphs. Our structure theorem (\cref{noKstStructure}) can be used to prove \cref{TreewidthKst}. In fact, we use \cref{noKstStructure} to prove a more general ``fractional'' version below (\cref{TreewidthKstFractional}). Also note that $s+1$ colours in \cref{TreewidthKst} is best possible~\citep{LW2}. Since \cref{noKstStructure} implies \cref{TreewidthKst}, this says that the ``$(s-1)$-quasi'' term in \cref{Kst-rho,KstBasic,noKstStructure} is also best possible.



Note that a graph is $k$-colourable with clustering $c$ if and only if $G$ has an $H$-partition with width at most $c$ for some graph $H$ with $\chi(H)\leq k$. Thus \cref{TreewidthKst} implies:

\begin{cor}
\label{TreewidthKstPartition}
For $k,s,t\in\NN$ with $t\geq s$, there exists $c\in\NN$ such that every $K_{s,t}$-subgraph-free graph with treewidth at most $k$ there is an $H$-partition of $G$ with width at most $c$, for some graph $H$ with $\chi(H)\leq s+1$.
\end{cor}

Recent works have studied $H$-partitions of bounded width where $\tw(H)$ is small~\citep{ISW24,UTW,DDEHJMMSW24,DvoWoo,DDJMMW}, which is a much stronger property than having $\chi(H)$ small (since $\chi(H)\leq\tw(H)+1$ and there are bipartite graphs with unbounded treewidth (such as grid graphs). So it is natural to ask whether \cref{TreewidthKstPartition} can be strengthened to show that $\tw(H)\leq s$ or just $\tw(H)\leq f(s)$ for some function $f$. \cref{TPW} says this is true for $s=1$. However, \cref{ForceBadH} says it is false for $s\geq 2$: large complete graphs or large complete bipartite graphs are unavoidable in $H$. So in \cref{TreewidthKstPartition} (with $s\geq 2$),  ``$\chi(H)\leq s+1$'' cannot be replaced by ``$\tw(H)\leq f(s)$'' or  ``$\delta(H)\leq f(s)$'' or ``$\lchi(H)\leq f(s)$'' for any function $f$ (by the lower bound of \citet{Alon00}).

\subsection{Fractional Colouring}

Let $G$ be a graph.  For $p,q\in\NN$ with $p\geq q$, a \defn{$p$:$q$-colouring} of $G$ is a function $\phi:V(G)\to \binom{C}{q}$ for some set $C$ with $|C|=p$. That is, each vertex is assigned a set of $q$ colours out of a palette of $p$ colours. For $t\in\RR$, a \defn{fractional $t$-colouring} is a $p$:$q$-colouring for some $p,q\in\NN$ with $\frac{p}{q}\leq t$. A \defn{$p$:$q$-colouring} $\phi$ of $G$ is \defn{proper} if $\phi(v)\cap \phi(w)=\emptyset$ for each edge $vw\in E(G)$. 

The \defn{fractional chromatic number} of $G$ is 
$$\chi_f(G) := \inf\left\{ t \in\RR \,: \, \text{$G$ has a proper fractional $t$-colouring} \right\}.$$
The fractional chromatic number is widely studied; see the textbook \citep{SU97}, which includes a proof of the fundamental property that $\fchi(G)\in\QQ$. 

Fractional 1-defective colourings were first studied by \citet{FS15}; see \citep{GX16,MOS11,Klostermeyer02} for related results. Fractional defective and clustered colouring (with general bounds on the defect and clustering) were introduced by \citet{DS20} and subsequently studied by \citet{NSW22} and \citet{EW23}. For a $p$:$q$-colouring $f:V(G)\to \binom{C}{q}$ of $G$ and for each colour $\alpha\in C$, the subgraph $G[ \{ v \in V(G): \alpha \in f(v) \} ]$ is called an \defn{$f$-monochromatic subgraph} or \defn{monochromatic subgraph} when $f$ is clear from the context. A connected component of an $f$-monochromatic subgraph is called an \defn{$f$-monochromatic component} or \defn{monochromatic component}. Note that $f$ is proper if and only if each $f$-monochromatic component has exactly one vertex. 

A $p$:$q$-colouring has \defn{defect} $d$ if every monochromatic subgraph has maximum degree at most $d$. A $p$:$q$-colouring has \defn{clustering} $c$ if every monochromatic component has at most $c$ vertices. 

The \defn{fractional defective chromatic number $\dfchi(\GG)$} of a graph class $\GG$ is the infimum of all $t>0$ such that, for some $d\in\NN$ every graph in $\GG$ is fractionally $t$-colourable with defect $d$. 
The \defn{fractional clustered chromatic number $\cfchi(\GG)$} of a graph class $\GG$ is the infimum of all $t>0$ such that, for some $c\in\NN$, every graph in $\GG$ is fractionally $t$-colourable with clustering $c$. 

For $k,n\in\NN$, let \defn{$T_{k,n}$} be the rooted tree in which every leaf is at distance $k-1$ from the root, and every non-leaf has $n$ children.  Let \defn{$C_{k,n}$} be the closure of $T_{k,n}$. Colouring each vertex by its distance from the root gives a $k$-colouring of $C_{k,n}$, and any root-leaf path in $C_{k,n}$ induces a $k$-clique. So $\chi(C_{k,n})=k$.
The class $\CC_k:= \{ C_{k,n} : n \in \NN\}$ is important for defective and clustered colouring, and is often called the `standard' example. It is well-known and easily proved (see \citep{WoodSurvey}) that 
\begin{equation}
\label{StandardExample}
\dchi(\CC_k)=\cchi(\CC_k)=\bigchi(\CC_k)=k.
\end{equation}
\citet{NSW22} extended this result (using a result of \citet{DS20}) to the setting of defective and clustered fractional chromatic number by showing that
\begin{equation}
\label{StandardExampleFractional}
\dfchi(\CC_k)= \cfchi(\CC_k) = \bigchi^f(\CC_k) = \dchi(\CC_k)=\cchi(\CC_k)=\bigchi(\CC_k)=k.
\end{equation}
Let $\omega_\Delta(\GG) := \sup\{k\in\NN: |\GG \cap \C_k|=\infty\}$.
Hence \cref{StandardExampleFractional} implies that for every graph class $\GG$, 
$$\cchi(\GG) \geq \max\{\dchi(\GG),\cfchi(\GG)\} \geq \min\{\dchi(\GG),\cfchi(\GG)\} \geq \dfchi(\GG) \geq \omega_\Delta(\GG).$$ 

For every proper minor-closed class $\GG$, \citet{NSW22} showed that
$$\dfchi(\GG)=\cfchi(\GG)=\omega_\Delta(\GG),$$
and \citet{Liu24} strengthened it by showing that $$\dchi(\GG)=\omega_\Delta(\GG).$$


As another example, the result of \citet{NSW22} implies that the class of graphs embeddable in any fixed surface has fractional clustered chromatic number and fractional defective chromatic number 3.

Assuming bounded maximum degree, \citet{Dvorak16} and \citet{DS20} proved the following stronger results\footnote{\cref{DS} is not explicitly stated in \citep{DS20,Dvorak16}, but it can be concluded from 
Lemma~19 in \citep{Dvorak16} (restated as 
Theorem~15 in \citep{DS20}) and Lemma~2 in \citep{DS20} where $f(G)$ is the maximum order of a component of $G$.}:


\begin{thm}[\citep{Dvorak16,DS20}]	 
\label{DS}
Let $f:\NN\to\mathbb{R}_{\geq 0}$ be a function such that $f(n) \in O(n^{\beta})$ for some fixed $\beta\in[0,1)$. 
Then for any $\Delta\in\NN$ and $\epsilon\in\mathbb{R}_{>0}$, there exist $p,q,c\in\NN$ with $p\leq(1+\epsilon)q$ such that every graph of maximum degree at most $\Delta$ admitting $f$-separators is $p$:$q$-colourable with clustering $c$.
\end{thm}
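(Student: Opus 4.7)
The plan is to follow the two-step strategy suggested by the footnote, combining a sparse-cover decomposition of \citep{Dvorak16} with a fractional-colouring lifting lemma of \citep{DS20}.

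The first step is a sparse-cover lemma: for every $\eta>0$ there is a constant $c=c(\eta,\Delta,f)$ such that every graph $G$ of maximum degree at most $\Delta$ admitting $f$-separators contains a set $X\subseteq V(G)$ with $|V(G)\setminus X|\leq \eta|V(G)|$ and such that every component of $G[X]$ has at most $c$ vertices. To prove this, apply the $f$-separator assumption recursively: take a balanced separator $S_0$ of size at most $f(n)\leq Cn^\beta$, so that each component of $G-S_0$ has size at most $n/2$; then recurse inside each component, stopping when a component has at most a threshold number $N=N(\eta,C,\beta)$ of vertices. Let $S$ be the union of all separators produced and set $X:=V(G)\setminus S$. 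Components of $G[X]$ sit inside individual leaf pieces of the recursion and hence have at most $N$ vertices, giving $c:=N$. A standard telescoping estimate, using concavity of $x^\beta$ for $\beta<1$, yields $|S|=O(n/N^{1-\beta})$, which is at most $\eta n$ once $N$ is large enough in terms of $\eta$, $C$, and $\beta$; graphs with $n<N$ are handled trivially.

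The second step lifts the sparse-cover lemma to a fractional clustered colouring. For a parameter $p$ to be chosen, produce $p$ sparse covers $X_1,\dots,X_p\subseteq V(G)$, arranged so that every vertex $v$ lies in at least $q:=p-\lceil\eta p\rceil$ of them. Define $\phi(v)$ to be any $q$-subset of $\{i:v\in X_i\}$. The resulting $p$:$q$-colouring $\phi$ has each monochromatic subgraph for colour $i$ contained in $G[X_i]$, hence with components of size at most $c$. Choosing $\eta:=\epsilon/(2(1+\epsilon))$ and $p$ sufficiently large makes $p/q=p/(p-\lceil\eta p\rceil)\leq 1+\epsilon$, completing the proof.

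The main obstacle is building the $p$ sparse covers with a \emph{uniform} coverage guarantee rather than the naive average bound (each vertex lies in at most $\eta p$ of the complements $V(G)\setminus X_i$ on average, but we need this to hold for \emph{every} vertex). The route carried out in Lemma~2 of \citep{DS20} is to use that the sparse-cover conclusion holds for every induced subgraph of $G$, together with a probabilistic/LP-duality argument that redistributes the uncovered vertices evenly; the bounded-degree assumption plays its part in ensuring that the rotated decompositions still produce components of bounded size, so that the lifted clustering constant is uniform across the class.
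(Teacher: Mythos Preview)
The paper does not prove this theorem itself: it is cited from \citep{Dvorak16,DS20}, with the footnote indicating only that it follows from Lemma~19 in \citep{Dvorak16} (restated as Theorem~15 in \citep{DS20}) combined with Lemma~2 in \citep{DS20}. Your outline follows exactly this two-step route, and your Step~1 (recursive separators down to pieces of size $N$, giving total separator size $O(n/N^{1-\beta})$) is a correct sketch of the first ingredient. So at the level of strategy your proposal matches the paper's treatment.

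That said, the step you yourself flag as the ``main obstacle'' is not actually resolved by what you write, and the division of labour between the two cited lemmas is slightly off. Your Step~1 gives only an \emph{average} guarantee, $|V(G)\setminus X|\le\eta|V(G)|$, whereas Step~2 needs the \emph{per-vertex} guarantee of fractional fragility: a distribution over good sets $X$ with $\Pr[v\notin X]\le\eta$ for every $v$. Invoking ``the sparse-cover conclusion holds for every induced subgraph'' together with LP duality does not bridge this gap: the minimax argument would require sparse covers against arbitrary vertex \emph{weightings}, which is strictly stronger than against induced subgraphs. The star $K_{1,n}$ admits a one-vertex sparse cover for every induced subgraph yet is not fractionally component-fragile (the centre must always be deleted), so bounded degree is genuinely needed and not merely to keep ``rotated decompositions'' producing bounded components as you suggest. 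In the cited sources the per-vertex fragility is already the content of Lemma~19/Theorem~15, while Lemma~2 is the comparatively routine conversion of fractional fragility into an explicit $p$:$q$-colouring; your sketch has these roles somewhat swapped and leaves the hard step as a black box.
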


\begin{cor}[\citep{Dvorak16,DS20}]	 
\label{DS-cfchi}
Every hereditary graph class admitting strongly sublinear separators and with bounded maximum degree has fractional clustered chromatic number 1.
\end{cor}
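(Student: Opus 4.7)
The plan is to derive this corollary as an immediate consequence of \cref{DS}, once the definition of $\cfchi$ is unfolded. Let $\GG$ be a hereditary class of bounded maximum degree that admits strongly sublinear separators. Fix $\Delta\in\NN$ and a function $f$ with $f(n)\in O(n^\beta)$ for some $\beta\in[0,1)$ witnessing these two properties uniformly for every graph in $\GG$.

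For the trivial lower bound $\cfchi(\GG)\geq 1$ I would simply note that every $p$:$q$-colouring satisfies $p\geq q$, so any fractional $t$-colouring has $t\geq 1$. Hence $\cfchi(\GG)\geq 1$ whenever $\GG$ contains a non-empty graph.

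For the matching upper bound, I would apply \cref{DS} once for each $\epsilon=1/n$ with $n\in\NN$, using the fixed parameters $\Delta$ and $f$. For each such $n$ this produces integers $p_n,q_n,c_n$ with $p_n/q_n\leq 1+1/n$ such that every graph in $\GG$ is $p_n$:$q_n$-colourable with clustering $c_n$; equivalently, every graph in $\GG$ is fractionally $(1+1/n)$-colourable with clustering $c_n$. By the definition of $\cfchi$ this gives $\cfchi(\GG)\leq 1+1/n$ for every $n$, and letting $n\to\infty$ yields $\cfchi(\GG)\leq 1$. Combining the two bounds gives $\cfchi(\GG)=1$.

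There is essentially no technical obstacle here: the corollary is just a repackaging of \cref{DS}. The one subtlety worth flagging is that $\cfchi$ is defined as an infimum rather than a minimum, so one should not expect a single pair $(p,q)$ with $p/q=1$; instead one needs a sequence of clustered colourings whose ratios tend to $1$, and this is exactly what the $\epsilon\to 0$ version of \cref{DS} provides. The hereditary hypothesis plays no role beyond ensuring that the separator and maximum-degree assumptions pass to subgraphs that may arise implicitly when invoking \cref{DS}.
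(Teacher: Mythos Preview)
Your proposal is correct and matches the paper's approach: the paper states \cref{DS-cfchi} as an immediate corollary of \cref{DS} without giving an explicit proof, and your argument is exactly the routine unpacking (take $\epsilon\to 0$ in \cref{DS}, noting that the infimum defining $\cfchi$ need not be attained) that the reader is expected to supply.
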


These results lead to the following.

\begin{cor}
\label{SSS-dfchi-cfchi}
For every hereditary graph class $\GG$ admitting strongly sublinear separators, $$\dfchi(\GG)=\cfchi(\GG).$$
\end{cor}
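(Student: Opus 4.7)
The inequality $\dfchi(\GG) \le \cfchi(\GG)$ is immediate from the definitions, since any fractional $t$-colouring with clustering $c$ is also a fractional $t$-colouring with defect $c-1$. So the plan is to prove the reverse inequality $\cfchi(\GG) \le \dfchi(\GG)$ by a colour-refinement argument: start with a near-optimal fractional colouring with bounded defect, then recolour each monochromatic subgraph using \cref{DS-cfchi} (or really \cref{DS}), exploiting the fact that each such subgraph is in $\GG$ (by hereditariness), admits $f$-separators (as an induced subgraph of a member of $\GG$), and has bounded maximum degree (this is what ``defect'' guarantees).

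In detail, fix $\epsilon > 0$ and pick $\epsilon_1, \epsilon_2 > 0$ with $(\dfchi(\GG)+\epsilon_1)(1+\epsilon_2) \le \dfchi(\GG) + \epsilon$. By definition of $\dfchi$, there exists $d \in \NN$ such that every $G \in \GG$ admits a $p$:$q$-colouring $f$ with $p/q \le \dfchi(\GG) + \epsilon_1$ and defect $d$ (here $p,q$ may depend on $G$). Apply \cref{DS} with $\Delta := d$ and $\epsilon := \epsilon_2$ to obtain fixed integers $p', q', c$ (depending only on $d, \epsilon_2$, and the separator function of $\GG$) with $p' \le (1+\epsilon_2) q'$ such that every graph of maximum degree at most $d$ admitting $f$-separators is $p'$:$q'$-colourable with clustering $c$.

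Now apply this to each $f$-monochromatic subgraph $H_\alpha := G[\{v : \alpha \in f(v)\}]$, for $\alpha$ in the palette $C$ of $f$. Since $\GG$ is hereditary, $H_\alpha \in \GG$; hence $H_\alpha$ admits $f$-separators; and $H_\alpha$ has maximum degree at most $d$ by the defect assumption. So $H_\alpha$ has a $p'$:$q'$-colouring $g_\alpha$ with clustering $c$, using a palette $D_\alpha$ of size $p'$; we may take the palettes $D_\alpha$ pairwise disjoint. Define $h : V(G) \to \binom{\bigcup_\alpha D_\alpha}{qq'}$ by
\[
h(v) := \bigcup_{\alpha \in f(v)} g_\alpha(v).
\]
This is a $pp'$:$qq'$-colouring of $G$, with ratio $\tfrac{pp'}{qq'} \le (\dfchi(\GG)+\epsilon_1)(1+\epsilon_2) \le \dfchi(\GG) + \epsilon$. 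For any colour $\beta \in D_\alpha$, the set $\{v : \beta \in h(v)\}$ equals $\{v \in V(H_\alpha) : \beta \in g_\alpha(v)\}$, and since $H_\alpha$ is an induced subgraph of $G$, the $h$-monochromatic components in colour $\beta$ coincide with the $g_\alpha$-monochromatic components in colour $\beta$. Hence $h$ has clustering at most $c$.

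This shows $\cfchi(\GG) \le \dfchi(\GG) + \epsilon$ for every $\epsilon > 0$, completing the argument. The only subtlety worth being careful about is the uniformity of constants in \cref{DS}: the parameters $p', q', c$ returned by that theorem depend on $d$ and $\epsilon_2$ but not on which $H_\alpha$ (or which $G$) they are applied to, which is exactly what allows the refinement to be combined across all monochromatic subgraphs simultaneously.
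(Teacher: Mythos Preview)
Your proof is correct and follows essentially the same approach as the paper's: both argue the nontrivial inequality by taking a near-optimal fractional defective colouring, applying \cref{DS} to each bounded-degree monochromatic subgraph (which lies in $\GG$ by hereditariness), and forming the product colouring. Your write-up is slightly more explicit about the product construction and the uniformity of $p',q',c$, but the argument is the same; one cosmetic note is that you use $f$ for both the colouring and the separator function, which is a harmless clash worth renaming.
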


\begin{proof}
It follows from the definitions that $\dfchi(\GG)\leq \cfchi(\GG)$. We now prove that $\cfchi(\GG)\leq \dfchi(\GG)$. 
Fix a graph $G$.
Let $k:= \dfchi(\GG)$.
Thus, for each $\epsilon>0$, there exist $p,q,d \in \NN$ with $p\leq(k+\epsilon)q$ such that $d$ only depends on $\GG$ and $\epsilon$, and $G$ is $p$:$q$-colourable with defect $d$. 
By \cref{DS}, for each $\epsilon'>0$, there exist $p',q',c \in \NN$ with $p'\leq(1+\epsilon')q'$ such that $c$ only depends on $\GG$, $d$ and $\epsilon'$, and every monochromatic subgraph of $G$ (under the first colouring) is $p'$:$q'$-colourable with clustering $c$. 
Taking a product colouring, we find that $G$ is $pp'$:$qq'$-colourable with clustering $c$. 
Now, $pp' \leq (k+\epsilon)(1+\epsilon')qq'$. 
We may choose $\epsilon$ and $\epsilon'$ so that $(k+\epsilon)(1+\epsilon')$ is arbitrarily close to $k$. 
So $\cfchi(\GG)\leq k$.
\end{proof}

\cref{OOWcor,SSS-dfchi-cfchi} immediately imply:

\begin{cor}
\label{SSS-Kst}
If $\GG$ is a monotone graph class admitting strongly sublinear separators and with  $K^*_{s,t}\not\in\GG$, then
$$\cfchi(\GG) = \dfchi(\GG) \leq \dchi(\GG) \leq s.$$
\end{cor}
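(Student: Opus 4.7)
The plan is to simply chain together the two cited results, noting that each inequality in the conclusion comes essentially for free.

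First I would verify the straightforward bound $\dfchi(\GG) \leq \dchi(\GG)$. This holds for any graph class: if $\GG$ has defective chromatic number at most $k$, then for some $d\in\NN$ every graph in $\GG$ admits a $k$-colouring with defect $d$, which is a $k$:$1$-colouring with defect $d$, hence a fractional $k$-colouring with defect $d$. So the fractional defective chromatic number is at most $k$.

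Next I would observe that a monotone class is in particular hereditary, so \cref{SSS-dfchi-cfchi} applies to $\GG$ and yields $\cfchi(\GG)=\dfchi(\GG)$. Then \cref{OOWcor}, whose hypotheses (monotone, strongly sublinear separators, $K^*_{s,t}\not\in\GG$) match those of the present corollary exactly, gives $\dchi(\GG)\leq\ldchi(\GG)\leq s$. Concatenating these bounds produces the desired chain
\[
\cfchi(\GG) = \dfchi(\GG) \leq \dchi(\GG) \leq s.
\]

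There is no serious obstacle: the work has been done in \cref{OOWcor} and \cref{SSS-dfchi-cfchi}, and the present statement is just their composition together with the trivial inequality $\dfchi \leq \dchi$. The proof therefore reduces to a one-line citation of these two results.
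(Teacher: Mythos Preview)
Your proposal is correct and matches the paper's approach exactly: the paper states that \cref{OOWcor,SSS-dfchi-cfchi} immediately imply the result, and your argument spells out precisely this chain (including the observation that monotone implies hereditary and the trivial bound $\dfchi\leq\dchi$).
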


Note that \cref{SSS-Kst} implies the upper bound of the above-mentioned result of \citet{NSW22} which says that if $\GG$ is the class of graphs with Euler genus $g$, then $\cfchi(\GG) = \dfchi(\GG) =3$ (since $K_{3,2g+3}\not\in\GG$). 
In fact, the following more general result follows from \cref{SSS-Kst} since the class of $(g,k)$-planar graphs is monotone, admits $O(\sqrt{(g+1)(k+1)n})$ separators (see \citep{DEW17}), and does not contain $K_{3,(24k+1)(2g+2)+13}$ (see \citep{HW22}):

\begin{cor} 
\label{gkPlanar}
For any $g,k\in\NN_0$, if $\GG_{g,k}$ is the class of $(g,k)$-planar graphs, then 
$$\cfchi(\GG_{g,k}) = \dfchi(\GG_{g,k}) = \dchi(\GG_{g,k}) =3.$$
\end{cor}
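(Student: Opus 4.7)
The plan is to prove the upper bound $\dchi(\GG_{g,k}) \leq 3$ together with the matching lower bound $\omega_\Delta(\GG_{g,k}) \geq 3$; combining these with the chain $\cchi(\GG) \geq \min\{\dchi(\GG),\cfchi(\GG)\} \geq \dfchi(\GG) \geq \omega_\Delta(\GG)$ recalled in the excerpt (together with the trivial $\cfchi \leq \dchi$) will yield all three claimed equalities simultaneously.

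For the upper bound I will simply invoke \cref{SSS-Kst} with $s=3$, checking its three hypotheses for $\GG_{g,k}$. Monotonicity is immediate: restricting a $(g,k)$-planar drawing of a graph $G$ to any subgraph of $G$ produces a valid $(g,k)$-planar drawing. Strongly sublinear separators of order $O(\sqrt{(g+1)(k+1)\,n})$ are provided by \citep{DEW17}. Finally, \citep{HW22} shows that $K_{3,N} \not\in \GG_{g,k}$ for $N=(24k+1)(2g+2)+13$; since $K^*_{3,t}$ is a subgraph of $K_{3,t+\binom{3}{2}}$ (as noted in \cref{Prelims}), this forces $K^*_{3,N-3} \not\in \GG_{g,k}$. \cref{SSS-Kst} then delivers $\cfchi(\GG_{g,k}) = \dfchi(\GG_{g,k}) \leq \dchi(\GG_{g,k}) \leq 3$.

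For the lower bound I will show that $C_{3,n}$ is planar for every $n \in \NN$, which suffices because every planar graph is $(g,k)$-planar for all $g,k \in \NN_0$, giving $\CC_3 \subseteq \GG_{g,k}$ and hence $\omega_\Delta(\GG_{g,k}) \geq 3$. Planarity of $C_{3,n}$ follows by a block decomposition: writing $r$ for the root, $c_1,\ldots,c_n$ for its children, and $g_{i,1},\ldots,g_{i,n}$ for the children of $c_i$, each biconnected block of $C_{3,n}$ is induced on $\{r, c_i, g_{i,1}, \ldots, g_{i,n}\}$ and consists of $n$ triangles pairwise sharing the edge $rc_i$---a planar ``book''---while these blocks meet only at the cut vertex $r$. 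Combining with \cref{StandardExampleFractional} and the chain of inequalities above, we obtain $\dfchi(\GG_{g,k}) \geq 3$, $\cfchi(\GG_{g,k}) \geq 3$, and $\dchi(\GG_{g,k}) \geq 3$, completing the argument.

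The proof is essentially bookkeeping: the substantive work is buried in \cref{SSS-Kst} (and its antecedents \cref{OOWcor} and \cref{SSS-dfchi-cfchi}) and in the standard-example theory of \cref{StandardExampleFractional}. The only mildly technical step is the conversion of the $K_{3,N}$-exclusion of \citep{HW22} into a $K^*_{3,t}$-exclusion usable in \cref{SSS-Kst}, which is the role of the $\binom{s}{2}$ shift.
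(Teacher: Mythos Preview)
Your approach matches the paper's: invoke \cref{SSS-Kst} with $s=3$ after verifying monotonicity, strongly sublinear separators (via \citep{DEW17}), and exclusion of a suitable complete bipartite graph (via \citep{HW22}). The paper leaves the lower bound implicit, while you spell it out via the planarity of $C_{3,n}$, which is fine.

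One small slip worth correcting: your conversion from $K_{3,N}\notin\GG_{g,k}$ to $K^*_{3,N-3}\notin\GG_{g,k}$ uses the containment in the wrong direction. Knowing that $K^*_{3,N-3}$ is a subgraph of $K_{3,N}$ and that $K_{3,N}\notin\GG_{g,k}$ does \emph{not} force $K^*_{3,N-3}\notin\GG_{g,k}$; a subgraph of a non-member of a monotone class may well be a member. The correct (and simpler) argument runs the other way: $K_{3,N}$ is a subgraph of $K^*_{3,N}$, so if $K^*_{3,N}\in\GG_{g,k}$ then by monotonicity $K_{3,N}\in\GG_{g,k}$, a contradiction; hence $K^*_{3,N}\notin\GG_{g,k}$ and \cref{SSS-Kst} applies with $t=N$. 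No $\binom{s}{2}$ shift is needed.
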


\subsection{Fractional List Colouring}

We now show how to use our structural result about quasi-tree-partitions to strengthen \cref{SSS-Kst} to choosability for graphs with bounded tree-width.

For a list-assignment $L$ of a graph $G$, an \defn{$L$:$q$-colouring} of $G$ is a function $\phi$ such that $\phi(v)$ is a $q$-element subset of $L(v)$ for each vertex $v$ of $G$. 
A graph $G$ is \defn{$p$:$q$-list-colourable with clustering} $c$ if for every $p$-list-assignment $L$ of $G$, there is an $L$:$q$-colouring of $G$ with clustering $c$. 

The following is the main result of this subsection, where the non-fractional ($\ell=1$) case (\cref{TreewidthKst}) was proved by the authors~\citep{LW1}. 

\begin{restatable}{thm}{TreewidthKstFractional}
\label{TreewidthKstFractional}
For any $k,s,t,\ell\in\NN$ there exists $c\in\NN$ such that every $K^*_{s,t}$-subgraph-free graph with tree-width at most $k$ is $(\ell s+1)$:$\ell$-list-colourable with clustering $c$.
\end{restatable}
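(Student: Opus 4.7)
The plan is to apply \cref{noKstStructure} (with $S=\emptyset$) to obtain a clean $(s-1)$-quasi-tree-partition $\TT=(T,(B_x)_{x\in V(T)},(E_v)_{v\in V(G)})$ of $G$ with width at most $18tk^s$, degree at most $6tk^{s-1}$, and the additional Property~(2). Root $T$. I will build the colouring $\phi$ by processing the vertices of $G$ in BFS order with respect to $T$, colouring all of $B_x$ before any vertex in a descendant bag.

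When the algorithm reaches $v\in B_x$, every endpoint of an edge in $E_v$ is a non-parent ancestor of $x$ and hence has an already-chosen colour set $\phi(u)\subseteq L(u)$ of size $\ell$. Set $A(v):=\bigcup_{vu\in E_v}\phi(u)$; then $|A(v)|\leq(s-1)\ell$, so the restricted list $L'(v):=L(v)\setminus A(v)$ has size at least $\ell s+1-(s-1)\ell=\ell+1$. I then pick one ``excluded'' colour $e(v)\in L'(v)$ (details below) and set $\phi(v):=L'(v)\setminus\{e(v)\}$, so $|\phi(v)|=\ell$. By construction $\phi(v)\cap A(v)=\emptyset$, so no edge in $\bigcup_v E_v$ is monochromatic for any colour; hence every monochromatic subgraph of $G$ lives in the tree-partition part $G-\bigcup_v E_v$, and each monochromatic component spans a connected subtree of $T$.

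To choose $e(v)$, I adapt the greedy argument from \citep{LW1} for the $\ell=1$ case: among the $\ell+1$ colours in $L'(v)$, pick $e(v)$ to be the one that would, if included in $\phi(v)$, cause $v$ to join the largest existing monochromatic component through $v$'s neighbours in $B_x\cup B_y$ (where $y$ is the parent of $x$). Thus every colour $\alpha\in\phi(v)$ adds $v$ to an $\alpha$-component already no larger than this blocked ``worst'' one, giving an inductive control on how much each colour class grows when $v$ is added.

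The main obstacle is converting this local rule into a global clustering bound $c=c(k,s,t,\ell)$ independent of $|V(G)|$. The key ingredients will be: (i) the width $w$ and tree-degree $\Delta$ of $\TT$ bound by $w\Delta$ the number of distinct monochromatic components of $G-\bigcup_v E_v$ that can meet $v$ through edges in $B_x\cup B_y$, so the ``worst'' colour excluded at $v$ genuinely dominates the others; (ii) Property~(2) of \cref{noKstStructure} forces any set of $\geq k+1$ common neighbours to lie on a single vertical path of $T$, which prevents monochromatic components from spreading horizontally across many subtrees and lets the induction close; and (iii) once we show that each monochromatic component spans at most $m=m(k,s,t,\ell)$ bags of $T$, the bag-width bound yields $|C|\leq w\cdot m$, giving the desired clustering $c$ and completing the proof.
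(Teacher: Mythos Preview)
Your framework is right---apply \cref{noKstStructure}, process bags in BFS order, and at each $v$ strip the $\ell(s-1)$ colours used by the far ends of $E_v$, leaving $\ell+1$ candidates---but the way you spend the last excluded colour does not close the argument, and your ingredient~(ii) is a red herring. Blocking the colour of the \emph{largest} adjacent component is vertex-specific: different vertices of $B_y$ may block different colours, so the component that $v$ blocks can still be extended by $v'\in B_y$, and nothing forces the vertical depth of a monochromatic component to shrink. You have not actually given an inductive invariant, only a hope that one exists. Moreover, Property~(2) of \cref{noKstStructure} plays no role here: once the $E_v$-edges are bichromatic, every monochromatic component already lives in the tree-partition $G-\bigcup_v E_v$ and therefore projects to a subtree of $T$, so there is no horizontal spreading to prevent. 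The paper's proof of \cref{QuasiTreePartitionFractionalColouring} uses only the width and degree bounds, not Property~(2).

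The missing idea is to avoid the colour of the \emph{oldest} adjacent monochromatic component rather than the largest, after first adding edges so that $B_x\cup B_y$ is a clique for every $xy\in E(T)$. The clique trick guarantees that colouring a new vertex never merges two pre-existing components, so the ``older than'' relation is stable over time. Now the key observation is global: every vertex of $B_y$ avoids the \emph{same} oldest component in $B_x$ (where $x$ is the parent of $y$), so that component never reaches $B_y$. Hence if a component is the $j$-th oldest among those meeting $B_x$ and it extends into $B_y$, it becomes the $i$-th oldest in $B_y$ for some $i\le j-1$. Since at most $\ell w$ components meet any bag (where $w$ is the width), this rank-decrease forces every monochromatic component to span a vertical path of length at most $\ell w$, and combined with the degree bound $d$ gives clustering at most $\max\{\ell w^2,\,2w d^{\ell w-1}\}$.
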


\cref{TreewidthKstFractional} follows from \cref{noKstStructure} and the next lemma.

\begin{lem}
\label{QuasiTreePartitionFractionalColouring}
For any $s,\ell,k,d\in\NN$, if a graph $G$ has a clean $(s-1)$-quasi-tree-partition of width at most $k$ and degree at most $d$, then $G$ is $(\ell s+1)$:$\ell$-list-colourable with clustering $\max\{\ell k^2, 2k d^{\ell k-1}\}$. 
\end{lem}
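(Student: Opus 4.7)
The plan is to construct $\phi$ greedily by processing the bags of $T$ in breadth-first order from the root, colouring every vertex of $B_x$ before moving to any child bag; within each bag, vertices are processed in an arbitrary fixed order. The cleanness condition on the quasi-tree-partition is essential here: for every $v\in B_x$, each of the at most $s-1$ quasi-neighbours of $v$ (the other endpoints of edges in $E_v$) lies in some strict ancestor of $B_x$ distinct from the parent $B_p$, and is therefore already coloured when $v$ is processed.

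For each $v\in B_x$ I would choose $\phi(v)\subseteq L(v)$ with $|\phi(v)|=\ell$ subject to two rules. The first rule requires $\phi(v)\cap\phi(w)=\emptyset$ for every quasi-neighbour $w$ of $v$; this excludes at most $(s-1)\ell$ colours from $L(v)$, leaving at least $\ell s+1-(s-1)\ell=\ell+1$ colours available. The second rule uses the one remaining degree of freedom to discourage colour propagation through the tree-partition edges: a natural choice is to drop the colour used by the largest number of $v$'s already-coloured neighbours in $B_p\cup B_x$, leaving exactly $\ell$ colours that we assign as $\phi(v)$. Both rules are jointly feasible, and the first rule alone ensures that no edge in $\bigcup_{v\in V(G)}E_v$ is monochromatic. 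Hence every monochromatic component of $\phi$ is a connected subgraph of $G':=G-\bigcup_v E_v$, for which $(T,(B_x)_{x\in V(T)})$ is an ordinary tree-partition of width at most $k$. Consequently, for each monochromatic component $C$ in some colour $\alpha$, the set $T_C:=\{x\in V(T):B_x\cap C\neq\emptyset\}$ induces a connected subtree of $T$, and $|C|\leq k\cdot|T_C|$.

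The heart of the argument is to bound $|T_C|$. I would carry out a potential/amortised analysis along $T_C$: each bag contains exactly $\ell|B_x|\leq\ell k$ colour-slots, and the second rule should force that each time $\alpha$ propagates from a bag of $T_C$ to a child bag of $T_C$ some numerical invariant measuring the ``$\alpha$-budget'' in the child bag strictly decreases. Since the budget starts at most $\ell k-1$ and must stay non-negative, every monotone root-to-leaf path in $T_C$ has length at most $\ell k-1$. Combined with $\Delta(T)\leq d$, this yields $|T_C|\leq 2d^{\ell k-1}$ when $d\geq 2$, while for $d\leq 1$ the tree $T$ is a path, so $|T_C|$ is at most its path-length, which is at most $\ell k$. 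In either case $|T_C|\leq\max\{\ell k,\,2d^{\ell k-1}\}$, and multiplying by $k$ gives the claimed clustering bound $\max\{\ell k^2,\,2kd^{\ell k-1}\}$.

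The main obstacle is the potential argument itself: one must define the ``$\alpha$-budget'' so that it genuinely drops by at least one at every propagation step of rule~(R2), handles arbitrary branching patterns of $T_C$, and correctly accounts for branching bags of $T_C$ where many children all inherit $\alpha$ from a single shared pool of parent colour-slots. The contribution of up to $k$ vertices of a single bag to the same monochromatic component also needs to be tracked carefully when passing from the bound on $|T_C|$ to the bound on $|C|$.
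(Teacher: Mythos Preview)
Your overall structure is right, but the proposal has a genuine gap precisely where you flag it: your rule~(R2), ``drop the colour used by the largest number of already-coloured neighbours in $B_p\cup B_x$'', does not support the potential argument you want. Frequency is a local quantity that need not decrease along $T_C$: the most frequent colour among $v$'s neighbours may be a short-range colour, while a less frequent colour $\alpha$ extends arbitrarily far up the tree; avoiding the former does nothing to stop $\alpha$ propagating. Moreover, different vertices of $B_y$ may have different most-frequent colours, so no single colour is uniformly blocked, and you cannot even guarantee that one specific component fails to reach $B_y$.

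The paper fixes this with two ideas you are missing. First, it adds dummy edges so that $B_x\cup B_y$ is a clique for every edge $xy\in E(T)$; this guarantees that colouring a new vertex never merges two pre-existing monochromatic components, so the relative ``age'' of components (ordered by the pair (first vertex, colour) under a BFS-compatible order) is stable throughout. Second, the avoidance rule is global rather than local: each vertex avoids the colour of the currently \emph{oldest} monochromatic component adjacent to it. Because of the clique edges, this is the same component for every vertex of $B_y$ as it is being coloured, so the oldest component touching $B_x$ never reaches $B_y$. Hence if a component is the $j$-th oldest in $B_x$ and does reach $B_y$, it is at most the $(j-1)$-th oldest there. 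That rank is your potential: it starts at most $\ell|B_x|\le\ell k$ and drops by at least one along every edge of $T_C$, giving the depth bound $\ell k$ and the stated clustering. Your ``$\alpha$-budget'' should be exactly this age-rank; with the frequency rule there is no analogous monotone invariant.
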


\begin{proof}
Let $L$ be an $(\ell s+1)$-list assignment for $G$. We may assume that $L(v)\subseteq \NN$ for each $v\in V(G)$. 
By assumption, there exists $(E_v\subseteq E(G):v\in V(G))$ with $|E_v|\leq s-1$ for each $v\in V(G)$, such that $G-\bigcup_{v\in V(G)}E_v$ has a $T$-partition $(B_x)_{x\in V(T)}$ of width at most $k$ and degree at most $d$, and for each $v\in V(G)$ and for each edge $vw\in E_v$, if $v\in B_x$ and $w\in B_y$, then $y$ is a non-parent ancestor of $x$ in $T$. 
We add edges to $G$ so that $B_x\cup B_y$ is a clique for each edge $xy\in E(T)$. 
To prove this lemma, it suffices to show that $G$ has an $L$:$p$-colouring.

Let $z$ be the root of $T$. 
Let $\preceq_T$ be a total order on $V(T)$ where for all $x,y\in V(T)$, if $\dist_T(z,x)\leq \dist_T(z,y)$ then $x\preceq_T y$. 
Let $\preceq_G$ be a total order on $V(G)$ where for all $v,w\in V(G)$, if $v\in B_x$ and $w\in B_y$ and $x\preceq_T y$, then $v\preceq_G w$. Finally, let $\preceq$ be a total order on $X:=\{(v,i):v\in V(G),i\in L(v)\}$, where for all $(v,i),(w,j)\in X$, if $v\preceq_G w$ then $(v,i)\preceq (w,j)$, and if $v=w$ and $i<j$, then $(v,i)\prec (w,j)$.

We will colour the vertices of $G$ in order of $\preceq_G$. 
For any monochromatic subgraphs $A$ and $B$ of some partially coloured induced subgraph of $G$, we say that $A$ is \defn{older} than $B$ if $(v_A,i_A)\prec(v_B,i_B)$, where for every $C \in \{A,B\}$, $i_C$ is the colour of $C$, and $v_C$ is the minimum vertex in $C$ with respect to $\preceq_G$.

Colour the vertices of $G$ in order of $\preceq_G$, where each vertex $v$ is assigned a set of $\ell$ colours in $L(v)$ distinct from the $\ell$ colours assigned to the end (distinct from $v$) of each edge in $E_v$, and distinct from the colour of the currently oldest monochromatic component adjacent in $G-\bigcup_{u \in V(G)}E_u$ to $v$. 
Such a colouring exists, since $|L(v)|-\ell|E_v|-1\geq |L(v)|-\ell(s-1)-1\geq \ell$. 

Since for each $v\in V(G)$, the ends of each edge in $E_v$ are assigned disjoint sets of colours, if $vw$ is any monochromatic edge of $G$ with $v\in B_x$ and $w\in B_y$ then $x=y$ or $xy\in E(T)$. That is, monochromatic edges of $G$ map to vertices or edges of $T$. This implies that when colouring a vertex $v$, since $B_x\cup B_y$ is a clique for each edge $xy\in E(T)$, no two distinct  pre-existing monochromatic components are merged into one monochromatic component. So colouring a vertex does not change the older relationship between pre-existing monochromatic components. That is, if $C_1$ and $C_2$ are distinct monochromatic components at some point, and $C_1$ is older than $C_2$ at this point, then at any later time, the monochromatic component containing $C_1$ is older than the monochromatic component containing $C_2$. 

Consider an edge $xy\in E(T)$ with $x$ the parent of $y$. Say $C$ is the $j$-th oldest monochromatic component intersecting $B_x$ immediately after all the vertices in $B_x$ have been coloured, where $j\leq \ell |B_x|\leq \ell k$. 
Note that when colouring vertices in $B_y$, $C$ is always contained in the $j$-th oldest monochromatic component intersecting $B_x$ (although this $j$-th oldest monochromatic component intersecting $B_x$ can change from time to time). 
When colouring vertices in $B_y$, by the above colouring procedure, the oldest monochromatic component intersecting $B_x$ cannot intersect $B_y$. 
So after colouring all the vertices in $B_y$, if the monochromatic component $C'$ containing $C$ intersects $B_y$, then $C'$ is the $i$-th oldest monochromatic component intersecting $B_y$, for some $i \leq j-1$. 

Now we bound the clustering of the colouring.
Let $M$ be a monochromatic component when all vertices of $G$ are coloured.
Assume that $M$ intersects all of $B_{x_1},\dots,B_{x_p}$, where $x_i$ is the parent of $x_{i+1}$ for each $i\in\{1,\dots,p\}$.
For each $i \in \{1,\dots,p\}$, let $d_i$ be the smallest integer such that when we just finished colouring all vertices of $x_i$, the $d_i$-th oldest component intersecting $B_{x_i}$ is contained in $M$.
Then $d_1\leq \ell k$, and $d_i\leq d_{i-1}-1$ for each $i\in\{2,\dots,p\}$. 
Thus $1\leq d_p\leq d_1-p+1\leq \ell k-p+1$, implying $p\leq \ell k$. Since $\Delta(T)\leq d$ and $|B_x|\leq  k$ for each $x\in V(T)$, we have $|V(M)|\leq k(1+d+d^2+\dots+d^{\ell k-1})$.
Note that if $d \geq 2$, then $(1+d+d^2+\dots+d^{\ell k-1}) \leq 2d^{\ell k-1}$.
So $|V(M)| \leq \max\{\ell k^2, 2k d^{\ell k-1}\}$. 
\end{proof}

\section{Structural Results II}
\label{StructureII}

This section establishes extensions of the structural results in \cref{StructureI} for  graphs containing no 1-extension of $K_{s,a}$ and no skewered $K_{s,b}$.

\begin{lem} \label{extension-or-skewer}
Let $s,a,b\in\NN$ with $a \geq 2$. For any graph $G$ and set $X \subseteq V(G)$ with $|X|=s$, if some component $C$ of $G-X$ contains at least $(a-1)(b-1)+1$ vertices in $N_G^{\geq s}(X)$, then $G$ contains a 1-extension of $K_{s,a}$ or a skewered $K_{s,b}$.
\end{lem}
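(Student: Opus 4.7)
The plan is to split on whether $C$ contains a path through $b$ vertices of $Y:=C\cap N_G^{\geq s}(X)$. By hypothesis $|Y|\geq (a-1)(b-1)+1$, and since $|X|=s$, every $y\in Y$ is adjacent to all of $X$. If some path $P$ in $C$ contains $b$ vertices of $Y$, then $P$ is disjoint from $X$, and together with $X$ these $b$ vertices of $Y$ form a $K_{s,b}$ skewered by $P$, so $G$ contains a skewered $K_{s,b}$ and we are done.

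Otherwise every path in $C$ contains at most $b-1$ vertices of $Y$, and the goal is to exhibit a 1-extension of $K_{s,a}$. Fix $r\in Y$ and let $T$ be a spanning tree of $C$ rooted at $r$; each root-path $P_T(r,v)$ is a path in $C$, so it contains at most $b-1$ vertices of $Y$. Define an auxiliary rooted tree $T^Y$ on vertex set $Y$ by setting the parent of each $y\in Y\setminus\{r\}$ to be the deepest $Y$-vertex strictly above $y$ on $P_T(r,y)$. Then the depth of $y$ in $T^Y$ is one less than the number of $Y$-vertices on $P_T(r,y)$, and hence lies in $\{0,1,\dots,b-2\}$.

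The key step is a pigeonhole argument: since only $r$ lies at depth $0$ of $T^Y$, if every depth $d\in\{1,\dots,b-2\}$ contained at most $a-1$ vertices, then $|Y|\leq 1+(a-1)(b-2)<(a-1)(b-1)+1$, where the strict inequality uses $a\geq 2$. Hence some depth $d^*\geq 1$ of $T^Y$ contains at least $a$ vertices; choose any $a$-subset $Y'$ of this depth and set $H:=\bigl(\bigcup_{y\in Y'}P_T(r,y)\bigr)\setminus Y'$. Because the vertices of $Y'$ share the same $T^Y$-depth, none of them is a $T^Y$-ancestor of another, so the vertices of $Y'$ are precisely the leaves of the subtree $\bigcup_{y\in Y'}P_T(r,y)$ of $T$; removing them leaves $H$ as a subtree of $T$, which is a connected subgraph of $C$ disjoint from $X\cup Y'$ and containing $r$.

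Finally, since $r\in V(H)\cap Y$, there is an edge in $G$ from $r$ to every vertex of $X$; and for each $y\in Y'$ the $T$-parent of $y$ lies in $H$ (it is either a non-$Y$ vertex or the $T^Y$-parent of $y$ at depth $d^*-1$, and in either case not in $Y'$), so $H$ is adjacent in $G$ to $y$. Combined with $X\times Y'\subseteq E(G)$, this shows that $X$, $Y'$ and $H$ form a 1-extension of $K_{s,a}$ inside $G$, because contracting $H$ in the subgraph on $V(H)\cup X\cup Y'$ with the relevant edges produces $K_{1,s,a}$. I expect the pigeonhole step to be the main obstacle, as one must carefully account for the single vertex at depth $0$ and invoke $a\geq 2$ to obtain strict inequality; the subsequent verifications (connectivity of $H$, disjointness from $X\cup Y'$, and that the appropriate contraction yields $K_{1,s,a}$) are routine.
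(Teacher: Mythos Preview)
Your proof is correct and follows essentially the same approach as the paper: take a spanning tree of $C$ rooted in $Y$, bound the number of $Y$-vertices on any root-path by $b-1$, extract $a$ pairwise $T$-incomparable elements of $Y$, and use the subtree above them as the connected set $H$. The only difference is that the paper invokes Dilworth's theorem on the ancestor poset to obtain an antichain of size $\lceil |Y|/(b-1)\rceil\geq a$, whereas you build the auxiliary tree $T^Y$ and pigeonhole over its $b-1$ depth levels; for a tree poset these give the same bound, so your route is equivalent and slightly more elementary.
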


\begin{proof}
Let $T$ be a spanning tree of $C$ rooted at a vertex $r$ in $N_G^{\geq s}(X)$. If there exists a path $P$ in $T$ from $r$ containing $b$ vertices in $N_G^{\geq s}(X)$, then $G[X \cup V(P)]$ contains a skewered $K_{s,b}$.
So we may assume that every path in $T$ from $r$ contains at most $b-1$ vertices in $N_G^{\geq s}(X)$. 
In particular, $b \geq 2$.

Define a poset $Q=(N_G^{\geq s}(X) \cap V(C), \preceq)$ such that for any two elements $x,y$ in the ground set, $x \preceq y$ if and only if $x=y$ or $x$ is an ancestor of $y$ in $T$. Since every path in $T$ from $r$ contains at most $b-1$ vertices in $N_G^{\geq s}(X)$, every chain of $Q$ has size at most $b-1$.
By Dilworth's Theorem, $Q$ has an antichain $A$ of size $\lceil |N_G^{\geq s}(X) \cap V(C)|/(b-1) \rceil \geq a$. Since $a \geq 2$, $r \not \in A$. Let $T'$ be the subtree of $T$ consisting of all paths from $r$ to the parents of vertices in $A$. 
Contracting $T'$ into a vertex, and deleting vertices not in $X\cup V(T')\cup A$ gives a 1-extension of $K_{s,a}$.
\end{proof}





If $(B_x)_{x\in V(T)}$ is a partition of a graph $G$ indexed by a rooted tree $T$, then a node $y\in V(T)$ is \defn{$s$-heavy} (with respect to $(B_x)_{x\in V(T)}$) if $|N_G(B_y) \cap \bigcup_{q \in T \uparrow y}B_q| \geq s$. 

\begin{thm}
\label{NoExtensionSkewered-AlmostTreePartition}
Fix $k,s,a,b,\rho \in\mathbb{N}$ with $a,b \geq 2$. 
Let $t:=(s+(a-1)(b-1))(a-1)(b-1)+k+1$. 
Define $c:=c(s,t,\rho)$ as in \cref{pi}.
If $G$ is a graph with $\tw(G)\leq k-1$ and $\rho(G)\leq \rho$ that contains no 1-extension of $K_{s,a}$ and no skewered $K_{s,b}$, then for any set $S\subseteq V(G)$ with $|S| \leq s+(a-1)(b-1)$, there exists a clean $(s-1)$-quasi-tree-partition $(T,(B_x)_{x\in V(T)},(E_v)_{v\in V(G)})$ of $G$ of width at most $18ck$ such that: 
    \begin{itemize}
        \item if $z$ is the root of $T$, then $S\subseteq B_z$, 
        \item every node in $T$ has at most $6c$ $(s+1)$-heavy children, and 
		\item for every $Y \subseteq V(G)$, if $Y$ has at least $\max\{k+1,s+1\}$ common neighbours in $G$, then there exists a vertical path in $T$ passing through each node $x \in V(T)$ with $Y \cap B_x \neq \emptyset$. 
	\end{itemize}
\end{thm}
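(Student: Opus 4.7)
The plan is to mimic the inductive structure of \cref{heart}, with the $K^*_{s,t}$-subgraph-freeness hypothesis replaced by the weaker exclusion conditions via \cref{extension-or-skewer}, and with the ``at most $6c$ $(s+1)$-heavy children'' conclusion replacing the ``degree at most $6c$'' conclusion from \cref{heart}. We induct on $|V(G)|$. The base case $|V(G)| \leq 18ck$ is handled by the single-node tree $T = \{z\}$ with $B_z := V(G)$ and $E_v := \emptyset$ for every $v$; all stated conditions then hold trivially, since the root has no children and the only vertical path is the one-node path through $z$.

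For the inductive step, apply the balanced separator lemma of \citet[(2.6)]{RS-II} to find induced subgraphs $G_1, G_2$ of $G$ with $G_1 \cup G_2 = G$ and $|V(G_1 \cap G_2)| \leq k$, balancing the vertex sets. The root bag $B_z$ will consist of $S \cup V(G_1 \cap G_2)$ together with additional padding vertices, specifically vertices in $N_G^{\geq s}(X)$ for various $s$-subsets $X$ of $B_z$, chosen so that the heavy-children bound at the level below is enforced. Each subtree rooted at a child of $z$ is produced by a recursive application of the theorem to an induced subgraph of $G_i$ with an appropriate set $S' \subseteq B_z$ of size at most $s+(a-1)(b-1)$ (namely the neighbors in $B_z$ of the relevant component of $G - B_z$, truncated if necessary by pulling excess vertices into the padding). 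The width bound $18ck$ follows by careful accounting of the padding, and the vertical-path condition (3) is preserved because any $Y$ with at least $\max\{k+1, s+1\}$ common neighbors cannot have all of them inside the small separator $V(G_1 \cap G_2)$ (which has size at most $k$), so $Y$ lies entirely on one side of the separator and the inductive hypothesis applies there.

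The main obstacle, I expect, is bounding the number of $(s+1)$-heavy children at each node by $6c$. A child $y$ of a node $x$ is $(s+1)$-heavy precisely when $B_y$ has at least $s+1$ neighbors in $\bigcup_{q \in T \uparrow y} B_q$; so by pigeonhole some vertex of $B_y$ lies in $N_G^{\geq s}(X)$ for an $s$-subset $X$ of those ancestor bags. By \cref{extension-or-skewer} applied to $X$, at most $(a-1)(b-1)$ vertices of $N_G^{\geq s}(X)$ lie in any single component of $G - X$, so by padding the root bag with all of $N_G^{\geq s}(X)$ for every $s$-subset $X \subseteq B_z$ except for $O(1)$ components per $X$, the remaining heavy children are in bijection with a bounded collection of $s$-subsets $X$. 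The total count is then controlled by \cref{pi} applied to $G[B_z]$ with the specific $t = (s+(a-1)(b-1))(a-1)(b-1)+k+1$ chosen in the statement: each heavy child corresponds to a contribution from $N_G^{\geq s}(X)$ of size at most $(a-1)(b-1)$, and the $+k+1$ term in $t$ absorbs the common neighbours that are forced into the separator by tree-width considerations. The cleanness of the resulting quasi-tree-partition is maintained by the same bookkeeping as in \cref{heart}, since every $E_v$ edge we introduce targets a non-parent ancestor by construction.
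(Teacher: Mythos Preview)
Your approach has a genuine gap, and it is different in kind from the paper's argument.

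You propose to rerun the separator-based induction of \cref{heart} under the weaker hypotheses, padding the root bag $B_z$ with vertices of $N_G^{\geq s}(X)$ for $s$-subsets $X$ of $B_z$, and then invoking \cref{pi} to control the number of $(s+1)$-heavy children. The problem is that \cref{pi} requires $G$ to be $K^*_{s,t}$-subgraph-free, which is \emph{not} assumed here. The graph $G$ may well contain $K_{s,N}$ for arbitrarily large $N$ (the hypotheses only exclude 1-extensions of $K_{s,a}$ and skewered $K_{s,b}$); so for a single $s$-set $X \subseteq B_z$ the set $N_G^{\geq s}(X)$ can be unbounded, and by \cref{extension-or-skewer} it may be spread over an unbounded number of components of $G-X$, each contributing up to $(a-1)(b-1)$ vertices. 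Your proposed padding is therefore unbounded, so the width bound $18ck$ fails, and there is no valid application of \cref{pi} to rescue the heavy-children count. The Case~2 step of \cref{heart} (absorbing $N_G^{\geq s}(S \cup \{u\})$ into $S'$) has no analogue here.

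The paper's proof does not mimic \cref{heart}. Instead it inducts on $|V(G)|$ with a dichotomy: if $G$ is $K_{s,t}$-subgraph-free, then \cref{noKstStructure} applies directly as a black box (here \cref{pi} is legitimate, and the degree bound there gives the heavy-children bound). Otherwise there is an $s$-set $X$ with $|N_G^{\geq s}(X)| \geq t$; by \cref{extension-or-skewer} the components of $G-X$ each meet $N_G^{\geq s}(X)$ in at most $(a-1)(b-1)$ vertices, so there are more than $|S|$ of them and one component $C$ avoids $S$. One then recurses on $G-V(C)$ (with the same $S$) and on $G[V(C) \cup X]$ (with $S' := X \cup (N_G^{\geq s}(X) \cap V(C))$, which has size at most $s+(a-1)(b-1)$). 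The crucial gluing step uses condition~(3): since $X$ has at least $t - (a-1)(b-1) \geq \max\{k+1,s+1\}$ common neighbours in $G-V(C)$, the nodes of $T^1$ meeting $X$ lie on a vertical path, and one attaches the second tree at its deepest such node $z^*$. The heavy-children bound and condition~(3) are then verified to be inherited from the two recursive calls. You should restructure along these lines; the vertical-path condition is not an afterthought but the mechanism that makes the gluing clean.
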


\begin{proof}
We proceed by induction on $|V(G)|$. 
If $G$ is $K_{s,t}$-subgraph-free, then the result follows from \cref{noKstStructure} (since $|S|\leq s+(a-1)(b-1)\leq t\leq c \leq 12ck$).
This proves the base case. 
Now assume that $G$ contains $K_{s,t}$. 
Hence there exists $X \subseteq V(G)$ with $|X|=s$ with $|N_G^{\geq s}(X)| \geq t$.

By \cref{extension-or-skewer}, every component of $G-X$ contains at most $(a-1)(b-1)$ vertices in $N_G^{\geq s}(X)$.
So $G-X$ contains at least $\lceil \frac{t}{(a-1)(b-1)} \rceil \geq s+(a-1)(b-1)+1 \geq |S|+1$ components.
Hence there exists a component $C$ of $G-X$ disjoint from $S$.
By induction, $G-V(C)$ has a clean $(s-1)$-quasi-tree-partition $(T^1,(B^1_x)_{x\in V(T^1)},(E^1_v)_{v\in V(G-V(C))})$
of width at most $18ck$ such that 
\begin{itemize}
	\item if $z_1$ is the root of $T^1$, then $S\subseteq B^1_{z_1}$, 
		\item every node of $T^1$ has at most $6c$ $(s+1)$-heavy children, and
		\item for every $Y \subseteq V(G)-V(C)$, if $Y$ has at least $\max\{k+1,s+1\}$ common neighbours in $G-V(C)$, then there exists a vertical path in $T^1$ passing through each node $x \in V(T^1)$ with $Y \cap B^1_x \neq \emptyset$.
	\end{itemize}

Let $S' := X \cup (N_G^{\geq s}(X) \cap V(C))$.
Note that $|S'| = |X| + |N_G^{\geq s}(X) \cap V(C)| \leq s+(a-1)(b-1)$ by \cref{extension-or-skewer}. 
By induction applied to $G[V(C) \cup X]$ with $S'$ the specified set, $G[V(C) \cup X]$ has a clean $(s-1)$-quasi-tree-partition 
$(T^2,(B^2_x)_{x\in V(T^2)},(E^2_v)_{v\in V(C) \cup X})$ of width at most $18ck$ such that 
	\begin{itemize}
        \item if $z_2$ is the root of $T^2$ then
 $S'\subseteq B^2_{z_2}$,
		\item every node of $T^2$ has at most $6c$ $(s+1)$-heavy children, and
		\item for every $Y \subseteq X \cup V(C)$, if $Y$ has at least $\max\{k+1,s+1\}$ common neighbours in $G[X \cup V(C)]$, then there exists a vertical path in $T^2$ passing through each node $x \in V(T^2)$ with $Y \cap B^2_x \neq \emptyset$.
	\end{itemize}
Note that $E^2_v=\emptyset$ for each $v \in B^2_{z_2}$, since $z_2$ has no parent in $T^2$. 
Similarly, for each child $x$ of $z_2$, we may assume that $E^2_v=\emptyset$ for each $v \in B^2_x$.

Let $T^3$ be the tree obtained from $T^2$ by adding a new node $z_3$ adjacent to $z_2$.
Consider $T^3$ to be rooted at $z_3$.
Let $B^3_{z_3} := X$, and let $B^3_{z_2} := B^2_{z_2}\setminus X$.
For each $x \in V(T^2)\setminus \{z_2\}$, let $B^3_x := B^2_x$.
For each $x \in V(T^2)$ that is not a child of $z_2$ and for each $v \in B^3_x$, let $E^3_v := E^2_v$.
For each $x \in V(T^2)$ that is a child of $z_2$ and for every $v \in B^3_x$, let $E^3_v$ be the set of edges between $v$ and $X$.
By construction, $(B^3_x)_{x \in V(T^3)}$ is a tree-partition of $G[X \cup V(C)]-\bigcup_{v \in X \cup V(C)}E^3_v$.
Since $N_G^{\geq s}(X) \cap V(C) \subseteq S' \subseteq B^2_{z_2}$, 
$E^3_v$ is a set of at most $s-1$ edges of $G[V(C) \cup X]$ incident with $v$ for every $v \in V(C)$.

Note that $X$ has at least $t$ common neighbours in $G$, and $V(C)$ contains at most $(a-1)(b-1)$ common neighbours of $X$.
So $X$ has at least $t-(a-1)(b-1) = (s-1+(a-1)(b-1))(a-1)(b-1)+k+1 \geq s+k \geq \max\{k+1,s+1\}$ common neighbours in $G-V(C)$.  
Thus, there exists a vertical path $P$ in $T^1$ passing through each node $x \in V(T^1)$ with $X \cap B_x \neq \emptyset$. 
Let $z^*$ be the vertex of $P$ furtherest from $z_1$ such that $X \cap B^1_{z^*} \neq \emptyset$. 
Thus each vertex $x \in V(T^1)$ with $X \cap B^1_x \neq \emptyset$ is an ancestor of $z^*$ or equal to $z^*$.

Let $T$ be the tree obtained from the disjoint union of $T^1$ and $T^3$ by identifying $z^*$ and $z_3$. Consider $T$ to be rooted at $z_1$. So the parent of $z_3$ in $T$ is the parent of $z^*$ in $T^1$.
For each $x \in V(T^1)$, let $B_x := B^1_x$.
For each $x \in V(T^3)-\{z_3\}$, let $B_x := B^3_x$.
Then $(B_x)_{x \in V(T)}$ is a partition of $V(G)$ of width at most $18ck$, and $S \subseteq B_{z_1}$.

For each $v \in V(G)-V(C)$, let $E_v:=E^1_v$.
For each $v \in V(C)-B^2_{z_2}$, let $E_v := E^3_v$.
By construction, for every $v \in V(G)-(B^2_{z_2}-X)$, $E_v$ is a set of at most $s-1$ edges incident with $v$, and for each edge $vw\in E_v$, if $v\in B_x$ and $w\in B_y$, then by the choice of $z^*$, $y$ is a non-parent ancestor of $x$.

For each $v \in B^2_{z_2}-X$, let $E_v$ be the set of edges between $v$ and $X-B^1_{z^*}$.
Since $X \cap B^1_{z^*} \neq \emptyset$, we know $|X-B^1_{z^*}| \leq |X|-1 \leq s-1$, so $v \in N_G^{\leq s-1}(X-B^1_{z^*})$ for every $v \in V(C)$.
Hence $|E_v| \leq s-1$ for every $v \in V(G)$.
And by the choice of $z^*$, for every $v \in B^2_{z_2}-X$ and for each edge $vw\in E_v$, if $v\in B_x$ and $w\in B_y$, then $y$ is a non-parent ancestor of $x$.

By the definition of $E_v$ for $v \in B^2_{z_2}-X$, we know $(B_x)_{x \in V(T)}$ is a $T$-partition of $G-\bigcup_{v \in V(G)}E_v$. Hence $((B_x)_{x\in V(T)},(E_v)_{v\in V(G)})$ is a clean $(s-1)$-quasi-$T$-partition of $G$ of width at most $18ck$.


We now prove the claim about the number of heavy children of a node in $T$. For each $x \in V(T^1)$ and child $y$ of $x$ in $T^1$, if $X \cap B^1_y = \emptyset$, then $N_G(B_y)=N_G(B^1_y) \subseteq V(G)-V(C)$, so $|N_G(B_y) \cap \bigcup_{q \in T \uparrow y}B_q| = |N_G(B^1_y) \cap \bigcup_{q \in T^1 \uparrow y}B^1_q|$; if $X \cap B^1_y \neq \emptyset$, then by the choice of $z^*$, $T \uparrow y$ is disjoint from $V(T^2)$ and hence equal to $T^1 \uparrow y$, so $|N_G(B_y) \cap \bigcup_{q \in T \uparrow y}B_q| = |N_G(B^1_y) \cap \bigcup_{q \in T^1 \uparrow y}B^1_q|$.
If $x \in V(T^1)$ and $y$ is a child of $x$ in $T$ but not in $T^1$, then $x=z^*$ and $y=z_2$, so $N_G(B_y) \subseteq X \cup V(C)$, and hence $N_G(B_y) \cap \bigcup_{q \in T \uparrow y} \subseteq X$, which has size $s < s+1$.
And for every $x \in V(T^2)$ and child $y$ of $x$ in $T^2$, we have $N_G(B_y) \subseteq X \cup V(C)$ and $X \subseteq B^2_{z_2}$ and $z_2 \in T^2 \uparrow y$, so $N_G(B_y) \cap \bigcup_{q \in T \uparrow y}B_q \subseteq N_{G[X \cup V(C)]}(B^2_y) \cap \bigcup_{q \in T^2 \uparrow y}B^2_q$.
Therefore, every node of $T$ has at most $6c$ $(s+1)$-heavy children.

Let $Y$ be an arbitrary subset of $V(G)$ such that $Y$ has at least $\max\{k+1,s+1\}$ common neighbours in $G$.
If $Y \cap V(C) \neq \emptyset \neq Y-(X \cup V(C))$, then $X$ contains all common neighbours of $Y$, but there are at least $s+1 >|X|$ common neighbours of $Y$, a contradiction.
So either $Y \subseteq V(G)-V(C)$ or $Y \subseteq X \cup V(C)$.
If $Y \subseteq X$, then the vertical path in $T$ from $z_1$ to $z^*$ contains all nodes $x \in V(T)$ with $B_x \cap Y \neq \emptyset$ by the definition of $z^*$.
So we may assume $Y \not \subseteq X$.
Hence, if $Y \subseteq V(G)-V(C)$, then all common neighbours of $Y$ in $G$ are contained in $V(G)-V(C)$; if $Y \subseteq X \cup V(C)$, then all common neighbours of $Y$ in $G$ are contained in $V(C) \cup X$.
Hence there exists a vertical path in $T$ containing all nodes $x \in V(T)$ with $Y \cap B_x \neq \emptyset$.
This completes the proof.
\end{proof}

The value $t$ (and hence $c$) in the previous theorem depends on $k$. 
We can make it independent of $k$ if drop the `clean' requirement. The proof is almost identical to the previous theorem, but we rewrite it for completeness. 

\begin{thm} 
\label{s-2-no-k}
Fix $s,a,b,\rho \in\NN$ with $a \geq 2$ and $b \geq 2$. 
Let $t:=(s+(a-1)(b-1))(a-1)(b-1)+1$. 
Define $c:=c(s,t,\rho)$ as in \cref{pi}.
For any $k\in\NN$, if $G$ is a graph with $\tw(G)\leq k-1$ and $\rho(G)\leq\rho$ that contains no 1-extension of $K_{s,a}$ and no skewered $K_{s,b}$, then for any set $S\subseteq V(G)$ with $|S| \leq s+(a-1)(b-1)$, there exists an $(s-1)$-quasi-tree-partition $(T,(B_x)_{x\in V(T)},(E_v)_{v\in V(G)})$ of $G$ of width at most $18ck$, such that:
    \begin{itemize}
        \item if $z$ is the root of $T$ then $S\subseteq B_z$, and 
        \item every node of $T$ has at most $6c$ $(s+1)$-heavy children.
    \end{itemize}
\end{thm}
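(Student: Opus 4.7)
The plan is to prove \cref{s-2-no-k} by induction on $|V(G)|$, following the scheme of \cref{NoExtensionSkewered-AlmostTreePartition} but dropping the requirements of cleanness, the degree bound, and the vertical-path guarantee of Property~(2). Precisely these relaxations allow $t$ to be $(s+(a-1)(b-1))(a-1)(b-1)+1$ rather than $(s+(a-1)(b-1))(a-1)(b-1)+k+1$. If $G$ is $K_{s,t}$-subgraph-free, then it is also $K^*_{s,t}$-subgraph-free and $|S|\leq s+(a-1)(b-1)<t\leq c\leq 12ck$, so \cref{noKstStructure} directly produces a clean $(s-1)$-quasi-tree-partition of width at most $18ck$ and degree at most $6c$ (in particular at most $6c$ $(s+1)$-heavy children) with $S\subseteq B_z$. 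Otherwise $G$ contains $K_{s,t}$, witnessed by $X\subseteq V(G)$ with $|X|=s$ and $|N^{\geq s}_G(X)|\geq t$. By \cref{extension-or-skewer}, each component of $G-X$ contains at most $(a-1)(b-1)$ vertices of $N^{\geq s}_G(X)$, so $G-X$ has more than $|S|$ components, and some component $C$ is disjoint from $S$. Apply induction to $G-V(C)$ with $S$ and to $G[V(C)\cup X]$ with $S':=X\cup(N^{\geq s}_G(X)\cap V(C))$ (of size at most $s+(a-1)(b-1)$), obtaining quasi-tree-partitions $(T^1,(B^1_x),(E^1_v))$ and $(T^2,(B^2_x),(E^2_v))$ rooted at $z_1$ and $z_2$ with $S\subseteq B^1_{z_1}$ and $S'\subseteq B^2_{z_2}$; as usual we may assume $E^2_v=\emptyset$ whenever $v$ lies in $B^2_{z_2}$ or in a child-bag of $z_2$.

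The key new move is to choose $z^*$ to be a \emph{deepest} node of $Z^1:=\{y\in V(T^1):X\cap B^1_y\neq\emptyset\}$ (nonempty since $X\subseteq V(G)-V(C)$), form $T$ from the disjoint union $T^1\cup T^2$ by adding the edge $z^*z_2$ and rooting at $z_1$, and set $B_x:=B^1_x$ for $x\in V(T^1)$, $B_{z_2}:=B^2_{z_2}\setminus X$, and $B_x:=B^2_x$ for $x\in V(T^2)\setminus\{z_2\}$. This depth-based replacement of the vertical-path choice of $z^*$ used in \cref{NoExtensionSkewered-AlmostTreePartition} is what lets $t$ shed its $k$-dependence. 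Define $E_v:=E^1_v$ for $v\in V(G)-V(C)$; $E_v:=\{vw\in E(G):w\in X\setminus B^1_{z^*}\}$ for $v\in V(C)\cap B^2_{z_2}$ (size at most $s-1$ since $z^*\in Z^1$); $E_v:=\{vw\in E(G):w\in X\}$ for $v\in V(C)$ lying in a child-bag of $z_2$ in $T^2$ (size at most $s-1$ since $v\notin B^2_{z_2}\supseteq N^{\geq s}_G(X)\cap V(C)$); and $E_v:=E^2_v$ otherwise, which for such $v$ already contains every edge from $v$ to $X$, because $X\subseteq B^2_{z_2}$ and $v$'s $T^2$-bag is neither $z_2$ nor adjacent to $z_2$ in $T^2$.

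The remaining verification is routine casework. Every edge of $G-\bigcup_v E_v$ is a tree-partition edge in $T$: edges inside $G-V(C)$ by inheritance from $T^1$; edges inside $V(C)$ by inheritance from $T^2$ (using that $B_{z_2}$ still contains all of $V(C)\cap B^2_{z_2}$); and edges between $V(C)\cap B^2_{z_2}$ and $X\cap B^1_{z^*}$ through the new edge $z^*z_2$. Each $E_v$-edge points to a $T$-bag of strictly smaller depth: the crucial ingredient is that every vertex of $X$ sits in a $T^1$-bag of depth at most $\dist_{T^1}(z_1,z^*)$, which is strictly less than the $T$-depth of every node of $V(T^2)$. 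Thus $(T,(B_x),(E_v))$ is an $(s-1)$-quasi-tree-partition of $G$ of width at most $18ck$ with $S\subseteq B_{z_1}$.

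Finally, for the heavy-children count, the new child $z_2$ of $z^*$ is not $(s+1)$-heavy since $B_{z_2}\subseteq V(C)$ has all of its $V(G)-V(C)$-neighbours inside $X$, bounding its upward-neighbour set by $|X|=s$. For every other node $y\in V(T)$ the deepest choice of $z^*$ forces the $T$-upward-neighbour set of $B_y$ to coincide with the upward-neighbour set of $B^1_y$ in $T^1$ when $y\in V(T^1)$ (if $y\in Z^1$ then no $V(T^2)$-bag lies above $y$ in $T$; if $y\notin Z^1$ then $B^1_y$ has no $V(C)$-neighbours in $G$), and with the upward-neighbour set of $B^2_y$ in $T^2$ when $y\in V(T^2)\setminus\{z_2\}$ (all of $X$ sits above $y$ in $T$, and the $V(T^2)$-bags above $y$ in $T$ are exactly $T^2\uparrow y$). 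Hence $(s+1)$-heaviness is preserved vertex-by-vertex, and the bound of $6c$ heavy children per node carries over from induction. The main obstacle is precisely this upward-neighbour matching across the gluing: it was delivered by the vertical-path property in the clean setting and must here be supplied by the purely depth-based choice of $z^*$.
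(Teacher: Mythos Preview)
Your proposal is correct and follows essentially the same approach as the paper's own proof. The only cosmetic difference is that the paper first forms an auxiliary tree $T^3$ by adding a new root $z_3$ above $z_2$ with $B^3_{z_3}:=X$ and then identifies $z_3$ with $z^*$, whereas you directly add the edge $z^*z_2$; the resulting tree, bags, and edge sets $E_v$ coincide, and your case analysis for the $(s+1)$-heavy-children count matches the paper's verbatim.
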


\begin{proof}
We proceed by induction on $|V(G)|$.
If $G$ is $K_{s,t}$-subgraph-free, then the result follows from \cref{noKstStructure} (since $|S|\leq s+(a-1)(b-1)\leq t\leq c \leq 12ck $). 
This proves the base case. Now assume that $G$ contains $K_{s,t}$. Hence there exists $X \subseteq V(G)$ with $|X|=s$ with $|N_G^{\geq s}(X)| \geq t$.

By \cref{extension-or-skewer}, every component of $G-X$ contains at most $(a-1)(b-1)$ vertices in $N_G^{\geq s}(X)$.
So $G-X$ contains at least $\lceil \frac{t}{(a-1)(b-1)} \rceil \geq s+(a-1)(b-1)+1 \geq |S|+1$ components.
Hence there exists a component $C$ of $G-X$ disjoint from $S$.
By induction, $G-V(C)$ has an $(s-1)$-quasi-tree-partition 
$(T^1,(B^1_x)_{x\in V(T^1)},(E^1_v)_{v\in V(G)\setminus V(C)})$ of width at most $18ck$, such that if $z_1$ is the root of $T^1$ then $S\subseteq B^1_{z_1}$, and every node of $T^1$ has at most $6c$ $(s+1)$-heavy children.

Let $S' := X \cup (N_G^{\geq s}(X) \cap V(C))$.
Note that $|S'| = |X| + |N_G^{\geq s}(X) \cap V(C)| \leq s+(a-1)(b-1)$
By induction applied to $G[V(C) \cup X]$ with specified set $S'$, there exists an $(s-1)$-quasi-tree-partition $(T^2,(B^2_x)_{x\in V(T_2)},(E^2_v)_{v\in V(C)\cup X})$, such that if $z_2$ is the root of $T^2$ then $S'\subseteq B^2_{z_2}$, and every node of $T$ has at most $6c$ $(s+1)$-heavy children. 
Note that $E^2_v=\emptyset$ for every $v \in B^2_{z_2}$, since $z_2$ has no parent in $T^2$. Similarly, for every child $x$ of $z_2$, we may assume $E^2_v=\emptyset$ for every $v \in B^2_x$.

Let $T^3$ be the tree obtained from $T^2$ by adding a new node $z_3$ adjacent to $z_2$.
Consider $T^3$ to be rooted at $z_3$.
Let $B^3_{z_3} := X$ and $B^3_{z_2} := B^2_{z_2}-X$. 
For each $x \in V(T^2)-\{z_2\}$, let $B^3_x := B^2_x$.
For each $x \in V(T^2)$ that is not a child of $z_2$ and for each $v \in B^3_x$, let $E^3_v := E^2_v$.
For each $x \in V(T^2)$ that is a child of $z_2$ and for every $v \in B^3_x$, let $E^3_v$ be the edges between $v$ and $X$.
By construction, $(B^3_x)_{x \in V(T^3)}$ is a tree-partition of $G[X \cup V(C)]-\bigcup_{v \in X \cup V(C)}E^3_v$.
Since $N_G^{\geq s}(X) \cap V(C) \subseteq S' \subseteq B^2_{z_2}$, $E^3_v$ is a set of at most $s-1$ edges of $G[V(C) \cup X]$ incident with $v$ for every $v \in V(C)$.

Let $z^*$ be a node of $T^1$ with $B^1_{z^*} \cap X \neq \emptyset$, and maximising the distance in $T^1$ between $z^*$ and $z_1$. 
Let $T$ be the tree obtained from the disjoint union of $T^1$ and $T^3$ by identifying $z^*$ and $z_3$.
Note that $T$ is rooted at $z_1$, and the parent of $z_3$ in $T$ is the parent of $z^*$ in $T^1$.
For each $x \in V(T^1)$, let $B_x := B^1_x$.
For each $x \in V(T^3)-\{z_3\}$, let $B_x := B^3_x$.
Then $(B_x)_{x \in V(T)}$ is a partition of $V(G)$ with $|B_x| \leq 18ck$ for every $x \in V(T)$, and $S \subseteq B_{z_1}$.

For each $v \in V(G)-V(C)$, let $E_v := E^1_v$.
For each $v \in V(C)-B^2_{z_2}$, let $E_v := E^3_v$.
By construction, for each $v \in V(G)-(B^2_{z_2} \setminus X)$, $E_v$ is a set of at most $s-1$ edges incident with $v$, and for each edge $vw\in E_v$, if $v\in B_x$ and $w\in B_y$, then by the choice of $z^*$, $y \in T \uparrow x$. 

For each $v \in B^2_{z_2}-X$, let $E_v$ be the set of edges between $v$ and $X-B^1_{z^*}$.
Since $X \cap B^1_{z^*} \neq \emptyset$, we know $|X-B^1_{z^*}| \leq |X|-1 \leq s-1$, so $v \in N_G^{\leq s-1}(X-B^1_{z^*})$ for each $v \in V(C)$.
Hence $|E_v| \leq s-1$ for each $v \in V(G)$.
And by the choice of $z^*$, for every $v \in B^2_{z_2}-X$ and for each edge $vw\in E_v$, if $v\in B_x$ and $w\in B_y$, then $y \in T \uparrow x$. 

By the definition of $E_v$ for $v \in B^2_{z_2}-X$, we know $(B_x)_{x \in V(T)}$ is a tree-partition of $G-\bigcup_{v \in V(G)}E_v$.

For each $x \in V(T^1)$ and child $y$ of $x$ in $T^1$, if $X \cap B^1_y = \emptyset$, then $N_G(B_y)=N_G(B^1_y) \subseteq V(G)-V(C)$, so $|N_G(B_y) \cap \bigcup_{q \in T \uparrow y}B_q| = |N_G(B^1_y) \cap \bigcup_{q \in T^1 \uparrow y}B^1_q|$; if $X \cap B^1_y \neq \emptyset$, then by the choice of $z^*$, $T \uparrow y$ is disjoint from $V(T^2)$ and hence equal to $T^1 \uparrow y$, so $|N_G(B_y) \cap \bigcup_{q \in T \uparrow y}B_q| = |N_G(B^1_y) \cap \bigcup_{q \in T^1 \uparrow y}B^1_q|$.
If $x \in V(T^1)$ and $y$ is a child of $x$ in $T$ but not in $T^1$, then $x=z^*$ and $y=z_2$, so $N_G(B_y) \subseteq X \cup V(C)$, and hence $N_G(B_y) \cap \bigcup_{q \in T \uparrow y} \subseteq X$, which has size $s < s+1$.
And for every $x \in V(T^2)$ and child $y$ of $x$ in $T^2$, we have $N_G(B_y) \subseteq X \cup V(C)$ and $X \subseteq B^2_{z_2}$ and $z_2 \in T^2 \uparrow y$, so $N_G(B_y) \cap \bigcup_{q \in T \uparrow y}B_q \subseteq N_{G[X \cup V(C)]}(B^2_y) \cap \bigcup_{q \in T^2 \uparrow y}B^2_q$.
Therefore, for every $x \in V(T)$, there are at most $6c$ children $y$ of $x$ in $T$ such that $|N_G(B_y) \cap \bigcup_{q \in T \uparrow y}B_q| \geq s+1$.
%
This completes the proof.
\end{proof}

\cref{s-2-no-k} is applicable with $\rho(G)\leq\tw(G)$ by \cref{rho-tw}, in which case $c(s,t,\rho) \leq \max\{t,t \cdot \tw(G)^{s-1}+1\} \leq t (\tw(G)+1)^{s-1}$. The next result follows, which implies \cref{NoExtensionSkewered}. 

\begin{cor} 
\label{PseudoTW}
Fix $s,a,b \in\NN$ with $a,b \geq 2$. 
Let $t:=(s+(a-1)(b-1))(a-1)(b-1)+1$. For any $k\in\NN$, if $G$ is a graph with $\tw(G)\leq k-1$ that contains no 1-extension of $K_{s,a}$ and no skewered $K_{s,b}$, then $G$ has an $(s-1)$-quasi-tree-partition $(T,(B_x)_{x\in V(T)},(E_v)_{v\in V(G)})$ of width at most $18 tk^{s}$, such that every node has at most $6tk^{s-1}$ $(s+1)$-heavy children.
\end{cor}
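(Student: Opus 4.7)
The plan is to derive this corollary as an almost immediate consequence of \cref{s-2-no-k} applied with the choice $\rho := k-1$. First, I would invoke \cref{rho-tw} to obtain $\rho(G) \leq \tw(G) \leq k-1$, which verifies the hypothesis $\rho(G) \leq \rho$ of \cref{s-2-no-k} with $\rho = k-1$. The tree-width bound and the absence of a $1$-extension of $K_{s,a}$ and of a skewered $K_{s,b}$ carry over directly, and the value of $t$ in this corollary has been chosen to match the one in \cref{s-2-no-k}. I would take $S := \emptyset$, which satisfies $|S| \leq s+(a-1)(b-1)$ vacuously, so that the root $z$ of the output tree carries no constraint beyond $\emptyset \subseteq B_z$.

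Applying \cref{s-2-no-k} then produces an $(s-1)$-quasi-tree-partition of $G$ of width at most $18 c k$ in which every node has at most $6 c$ $(s+1)$-heavy children, where $c := c(s,t,k-1)$. The remaining task is to estimate $c$ in terms of $t$ and $k$. Using the definition from \cref{pi}, together with the observation (noted after \cref{pi}) that $c(s,t,\rho) \leq t\rho^{s-1}+1$, I would check the case $s=1$ directly (where $c = t = tk^{0}$) and the case $s \geq 2$ via $c \leq t(k-1)^{s-1}+1 \leq tk^{s-1}$, to conclude that $c \leq tk^{s-1}$ in all cases. Substituting this bound into the conclusion of \cref{s-2-no-k} yields width at most $18 tk^{s}$ and at most $6 tk^{s-1}$ $(s+1)$-heavy children per node, exactly as claimed.

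The only subtlety, and the main step that deserves care, is the numerical bookkeeping in the case analysis for $c$; no new structural idea is needed. Alternatively one can bypass the case split by invoking the uniform bound $c \leq t(\tw(G)+1)^{s-1}$ combined with $\tw(G)+1 \leq k$, which gives the same estimate.
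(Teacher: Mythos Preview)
Your proposal is correct and matches the paper's own derivation: the paper states that \cref{s-2-no-k} is applicable with $\rho(G)\leq\tw(G)$ by \cref{rho-tw}, giving $c(s,t,\rho) \leq \max\{t,\,t\cdot\tw(G)^{s-1}+1\} \leq t(\tw(G)+1)^{s-1}\leq tk^{s-1}$, from which the corollary follows with $S=\emptyset$. Your ``alternative'' route via the uniform bound $c\leq t(\tw(G)+1)^{s-1}$ is exactly how the paper phrases it, and your primary case-split route is an equivalent way to reach the same estimate.
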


\cref{s-2-no-k} leads to the following strengthening of \cref{NoExtensionSkewered-TPW} and the known result that graphs with bounded tree-width and maximum degree have bounded tree-partition width (\cref{TPW}).

\begin{cor} 
For any $a,b.k\in\NN$ with $a,b \geq 2$, if $G$ is a graph with $\tw(G)\leq k-1$ that contains no 1-extension of $K_{1,a}$ and no skewered $K_{1,b}$, then $G$ has a tree-partition $(T,(B_x)_{x\in V(T)})$ of width at most $18a^2b^2k$, such that for every $x \in V(T)$, there are at most $6a^2b^2$ children $y$ of $x$ such that $|N_G(B_y) \cap B_x| \geq 2$.
\end{cor}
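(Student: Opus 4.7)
The approach is to specialise \cref{PseudoTW} to the case $s=1$. The paragraph immediately after the definition of quasi-tree-partition notes that a $0$-quasi-tree-partition is precisely a tree-partition, and indeed an $(s-1)$-quasi-tree-partition with $s=1$ forces each $E_v$ to be empty. Consequently the family $(B_x)_{x\in V(T)}$ produced by \cref{PseudoTW} is an ordinary tree-partition of $G$, so every edge of $G$ with endpoints in distinct bags lies between a bag and its parent bag. In particular, for any child $y$ of $x$ in $T$, we have
$$N_G(B_y)\cap \bigcup_{q\in T\uparrow y}B_q \;=\; N_G(B_y)\cap B_x,$$
so the ``$(s+1)$-heavy'' condition from \cref{PseudoTW} specialises exactly to the condition $|N_G(B_y)\cap B_x|\geq 2$ appearing in the corollary.

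Next I would estimate the constants. Setting $s=1$ in \cref{PseudoTW} gives $t = (1+(a-1)(b-1))(a-1)(b-1)+1$. Writing $u:=(a-1)(b-1)$, we obtain $t = u^2+u+1 \leq (u+1)^2$, and $u+1 = ab-a-b+2 \leq ab$ because $a+b\geq 4\geq 2$. Hence $t \leq a^2b^2$. The width bound $18tk^s = 18tk$ of \cref{PseudoTW} therefore becomes at most $18a^2b^2k$, and the bound $6tk^{s-1}=6t$ on the number of $(s+1)$-heavy children becomes at most $6a^2b^2$, matching the statement.

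The proof is therefore immediate: apply \cref{PseudoTW} with $s=1$, reinterpret the resulting $0$-quasi-tree-partition as a tree-partition, translate the $2$-heavy condition as above, and substitute the constants. There is no real obstacle; the only point to verify is the translation of the heavy condition, which follows directly from the fact that a tree-partition has no edges between non-adjacent bags.
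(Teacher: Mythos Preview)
Your proposal is correct and follows essentially the same route as the paper's own proof. The only cosmetic difference is that the paper cites \cref{s-2-no-k} directly (with $s=1$ and $\rho=k$, noting that $c(1,t,\rho)=t$) rather than its corollary \cref{PseudoTW}; the resulting bounds, the reinterpretation of the $0$-quasi-tree-partition as a tree-partition, and the translation of the $(s+1)$-heavy condition into $|N_G(B_y)\cap B_x|\geq 2$ are identical.
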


\begin{proof}
By assumption, \cref{s-2-no-k} is applicable with $s=1$ and $\rho=k$. 
Let $(T,(B_x)_{x\in V(T)},(E_v)_{v\in V(G)})$ be the $0$-quasi-tree-partition of $G$ obtained. 
Since $s=1$, the width of $(T,(B_x)_{x\in V(T)},(E_v)_{v\in V(G)})$ is at most $18ck$, where $c \leq (1+(a-1)(b-1))(a-1)(b-1)+1 \leq ((a-1)(b-1)+1)^2 \leq a^2b^2$.
Since $(T,(B_x)_{x\in V(T)},(E_v)_{v\in V(G)})$ is a $0$-quasi-tree-partition, $E_v=\emptyset$ for every $v \in V(G)$. 
Thus $(B_x)_{x \in V(T)}$ is a tree-partition of $G$, and for any $x \in V(T)$ and child $y$ of $x$, $N_G(B_y) \cap \bigcup_{q \in T \uparrow x}B_q = N_G(B_y) \cap B_x$. Since each node of $T$ has at most $6c \leq 6a^2b^2$ 2-heavy children, the final claim follows.
\end{proof}

\section{Colouring II}
\label{ColouringII}

This section proves colouring results for graphs with no 1-extension of $K_{s,a}$ and no skewered $K_{s,b}$. The next lemma enables this. 

\begin{lem} 
\label{non-fractional-tree-colouring}
Fix $d,r \in \NN_0$ and $w \in\NN$. If $G$ is a graph that has an $r$-quasi-tree-partition $(T,(B_x)_{x\in V(T)},(E_v)_{v\in V(G)})$ of width at most $w$ such that every node of $T$ has at most $d$ $(r+2)$-heavy children, then $G$ is $(r+2)$-list-colourable with clustering $w(d+1)^{w}$. 
\end{lem}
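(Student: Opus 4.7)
The plan is to adapt the proof of \cref{QuasiTreePartitionFractionalColouring} (with $\ell=1$) using a two-case colouring rule that exploits the heavy/non-heavy child dichotomy. Given any $(r+2)$-list-assignment $L$, I would process $T$ in BFS order from the root and, within each bag, colour vertices one at a time. For a vertex $v \in B_y$, if $y$ is either the root of $T$ or a heavy (i.e.\ $(r+2)$-heavy) child of its parent, apply rule (a): assign $v$ a colour in $L(v)$ different from the (at most $r$) colours of the endpoints of the edges in $E_v$ and different from the colour of the oldest monochromatic component of $G-\bigcup_u E_u$ currently adjacent to $v$. Otherwise $y$ is a non-heavy child of its parent, and apply rule (b): assign $v$ a colour in $L(v)$ different from the colours of all vertices in $N_G(B_y)\cap \bigcup_{q\in T\uparrow y}B_q$, a set of size at most $r+1$. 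In both cases at most $r+1$ colours are forbidden, so since $|L(v)|\ge r+2$ a valid choice exists. Moreover, since every edge $vw\in E_v$ receives different colours at its endpoints, every monochromatic edge of $G$ lies inside a single bag or between two $T$-adjacent bags.

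Fix a monochromatic component $M$ and let $T_M:=\{x\in V(T):B_x\cap M\ne\emptyset\}$. The bag-locality of monochromatic edges implies, exactly as in the proof of \cref{QuasiTreePartitionFractionalColouring}, that $T_M$ is a connected subtree of $T$; let $x_0$ be its node closest to the root of $T$. The key structural claim is that every non-root node $y$ of $T_M$ is a heavy child of its $T$-parent $x$: if $y$ were non-heavy then rule (b) would forbid any monochromatic edge between $B_y$ and $\bigcup_{q\in T\uparrow y} B_q\supseteq B_x$, but the connectedness of $M$ and the bag-locality of its edges force a monochromatic edge between $B_x$ and $B_y$, a contradiction. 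Hence $T_M$ has branching at most $d$ at every node.

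For the depth of $T_M$, I would transfer the ``oldest-component drops rank by one per level'' argument from \cref{QuasiTreePartitionFractionalColouring}. Along any path $x_0=z_0, z_1, \dots, z_k$ in $T_M$, each $z_i$ with $i\ge 1$ is a heavy child of $z_{i-1}$, so rule (a) applies when colouring $B_{z_i}$; because $B_{z_{i-1}}$ is already fully coloured and BFS processing of intervening siblings does not alter it, exactly as in the earlier proof the oldest component intersecting $B_{z_{i-1}}$ cannot extend into $B_{z_i}$, and the age-rank of the sub-component of $M$ currently present drops by at least $1$ from $B_{z_{i-1}}$ to $B_{z_i}$. This rank is at most $|B_{x_0}|\le w$ initially and at least $1$ throughout, forcing $k\le w-1$.

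Combining the two estimates, $|T_M|\le \sum_{i=0}^{w-1} d^i\le (d+1)^w$ (using $\binom{w}{i}\ge 1$ for each $0\le i\le w-1$), and thus $|V(M)|\le w\,|T_M|\le w(d+1)^w$, as required. The main obstacle is verifying the structural claim that non-heavy children cannot appear as internal nodes of $T_M$; once this is done, the branching bound is immediate from the hypothesis on heavy children, and the depth bound is precisely the oldest-component argument already present in the proof of \cref{QuasiTreePartitionFractionalColouring}.
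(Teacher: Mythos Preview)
Your proposal is essentially the paper's proof: the same heavy/non-heavy colouring dichotomy (the paper's two rules coincide with your rules (a) and (b) once ``oldest component adjacent to $v$'' is replaced by ``oldest component intersecting the parent bag''), the same structural claim that every non-root node of $T_M$ must be a heavy child (the paper's Claim~2), the same branching bound on $T_M$ (Claim~3), and the same rank-drop argument for the depth of $T_M$ (Claim~5).

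There is one point that needs to be made precise. Your rule (a) and the rank-drop argument both require that every vertex of $B_{z_i}$ is adjacent to every monochromatic component intersecting $B_{z_{i-1}}$; otherwise ``oldest component adjacent to $v$'' need not be the oldest intersecting the parent bag, and the rank need not drop. In \cref{QuasiTreePartitionFractionalColouring} this is arranged by first adding edges so that $B_x\cup B_y$ is a clique for \emph{every} tree edge $xy$. But doing that here could push $|N_G(B_y)\cap\bigcup_{q\in T\uparrow y}B_q|$ above $r+1$ for a non-heavy child $y$, which would make rule (b) infeasible. The paper therefore adds clique edges only when $y$ is an $(r+2)$-heavy child of $x$ (and also makes each $B_x$ a clique); this does not change which children are heavy, and it suffices for the rank-drop because, by your structural claim, only heavy edges lie on $T_M$. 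With that adjustment your argument is complete and matches the paper.
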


\begin{proof}
We may add edges to $G$ so that $B_x$ is a clique for each $x \in V(T)$, and for each edge $xy$ of $E(T)$, 
if $y$ is an $(r+2)$-heavy child of $x$, then every vertex in $B_x$ is adjacent in $G$ to every vertex in $B_y$.

Let $\leq_T$ be a BFS-ordering of $V(T)$.
Let $\leq_G$ be a linear ordering of $V(G)$ such that for any distinct $x,y \in V(T)$ with $x \leq_T y$, if $u \in B_x$ and $v \in B_y$, then $u \leq_G v$.
For any $S \subseteq V(G)$, let \defn{$m(S)$} be the smallest vertex in $S$ according to $\leq_G$.
For each subgraph $H$ of $G$, let $\defn{m(H)}:=m(V(H))$.
For any $S_1,S_2$, where each $S_i$ is a subset of $V(G)$ or a subgraph of $G$, we define $S_1 \leq_G S_2$ if and only if $m(S_1) \leq_G m(S_2)$.
Denote $V(T)$ by $\{t_1,t_2,\dots,t_{|V(T)|}\}$ such that $t_i \leq_T t_j$ for every $i \leq j$.

Let $L$ be an arbitrary $(r+2)$-list-assignment of $G$.

Define $f_1$ to be an $L|_{B_{t_1}}$-colouring of $G[B_{t_1}]$ by defining $f_1(v)$ to be an arbitrary element in $L(v)$ for every $v \in B_{t_1}$.
For every $i \in [|V(T)|]-\{1\}$, define an $L|_{\bigcup_{j=1}^iB_{t_j}}$-colouring $f_i$ of $G[\bigcup_{j=1}^iB_{t_j}]$ as follows:
	\begin{itemize}
		\item For each $v \in \bigcup_{j=1}^{i-1}B_{t_j}$, let $f_i(v) := f_{i-1}(v)$.
		\item Let $p_i$ be the parent of $t_i$ in $T$.
		\item Let $M_i$ be the $f_{i-1}$-monochromatic component intersecting $B_{p_i}$ with the smallest $m(M_i)$. 
		\item Let $a_i$ be the colour of $M_i$.
		\item If $|N_G(B_{t_i}) \cap \bigcup_{q \in T \uparrow t_i}B_q| \geq r+2$, then for every $v \in B_{t_i}$, let $f_i(v)$ be an element in $L(v)-\{a_i, f_{i-1}(u): uv \in E_v\}$.
		\item If $|N_G(B_{t_i}) \cap \bigcup_{q \in T \uparrow t_i}B_q| \leq r+1$, then for every $v \in B_{t_i}$, let $f_i(v)$ be an element in $L(v)-\{f_{i-1}(u): u \in N_G(B_{t_i}) \cap \bigcup_{q \in T \uparrow t_i}B_q\}$.
	\end{itemize}
Since $|L(v)| \geq r+2$, $f_i(v)$ can be defined as above.

Let $f=f_{|V(T)|}$.
It suffices to show that $f$ has clustering $c := w(d+1)^{w}$.

Let $M$ be an arbitrary $f$-monochromatic component.
Let $S_M = \{t \in V(T): B_t \cap V(M) \neq \emptyset\}$.
To prove $|V(M)| \leq c$, it suffices to show that $S_M$ induces a subtree $T_M$ with maximum degree at most $d+1$ (\cref{non-fractional-tree-colouring-claim3} below) such that every path in $T_M$ from the root of $T_M$ to any other vertex contains at most $w$ vertices (implied by \cref{non-fractional-tree-colouring-claim5} below). This will imply that $|V(M)| \leq w|S_M| \leq w\sum_{i=0}^{w-1}(d+1)^i \leq w(d+1)^{w}= c$, as desired. 

\begin{claim} 
\label{non-fractional-tree-colouring-claim1}
If $uv$ is an edge of $G$ with $\{u,v\} \subseteq V(M)$ and $u \leq_G v$, then there exists an edge $xy$ of $T$ such that $\{u,v\} \subseteq B_x \cup B_y$.
\end{claim} 

\begin{proof} 
Let $x,y \in V(T)$ such that $u \in B_x$ and $v \in B_y$.
Since $u \leq_G v$, $x \leq_T y$.
Suppose to the contrary that $x \neq y$ and $x$ is not the parent of $y$.
Since $(B_t: t \in V(T))$ is a tree-partition of $G-\bigcup_{q \in V(G)}E_q$, $uv \in E_u \cup E_v$.
By the property of $E_u$ and $E_v$, we know that $uv \in E_v$ and $x \in T \uparrow y$.
Note that $x \in T \uparrow y$ implies that $u \in N_G(B_y) \cap \bigcup_{q \in T \uparrow y}B_q$.
If $|N_G(B_y) \cap \bigcup_{q \in T \uparrow y}B_q| \geq r+2$, then since $uv \in E_v$, $f(u) \neq f(v)$ by the definition of $f$.
If $|N_G(B_y) \cap \bigcup_{q \in T \uparrow y}B_q| \leq r+1$, then since $u \in N_G(B_y) \cap \bigcup_{q \in T \uparrow y}B_q$, $f(u) \neq f(v)$ by the definition of $f$.
Hence $f(u) \neq f(v)$ in either case, contradicting that $M$ is monochromatic.
\end{proof}

Since $M$ is connected, by \cref{non-fractional-tree-colouring-claim1}, we know $T_M = T[S_M]$ is a subtree of $T$.

\begin{claim} 
\label{non-fractional-tree-colouring-claim2}
For every edge $ty$ of $T_M$, where $t$ is the parent of $y$, $|N_G(B_y) \cap \bigcup_{q \in T \uparrow y}B_q| \geq r+2$.
\end{claim}

\begin{proof}
Since $\{t,y\} \subseteq S_M$, there exist $u \in B_t \cap V(M)$ and $v \in B_y \cap V(M)$.
Since $M$ is connected, there exists a path $P$ in $M$ between $u$ and $v$.
We choose $u$ and $v$ such that $P$ can be chosen to be as short as possible.
By \cref{non-fractional-tree-colouring-claim1}, $uv \in E(G)$.
Since $t \in T \uparrow y$, $u \in N_G(B_y) \cap \bigcup_{q \in T \uparrow y}B_q$. 
So if $|N_G(B_y) \cap \bigcup_{q \in T \uparrow y}B_q| \leq r+1$, then $f(u) \neq f(v)$, a contradiction.
\end{proof}

\begin{claim} 
\label{non-fractional-tree-colouring-claim3}
$T_M$ has maximum degree at most $d+1$.
\end{claim}

\begin{proof}
Suppose to the contrary that there exists $t \in V(T_M)$ with degree at least $d+2$.
So $t$ has at least $d+1$ children in $T_M$.
By the property of $(B_x)_{x \in V(T)}$, there exists a child $y$ of $t$ in $T_M$ such that $|N_G(B_y) \cap \bigcup_{q \in T \uparrow y}B_q| \leq r+1$, contradicting \cref{non-fractional-tree-colouring-claim2}.
\end{proof}


\begin{claim} 
\label{non-fractional-tree-colouring-claim4}
For every vertical path $P$ in $T_M$, there exists a path $P_M$ in $M$ such that $|V(P_M) \cap B_t|=1$ for every $t \in V(P)$, and $V(P_M) \cap B_t=\emptyset$ for every $t \in V(T)-V(P)$.
\end{claim} 

\begin{proof}
By \cref{non-fractional-tree-colouring-claim2}, for every edge $ty$ of $P$, where $t$ is a parent of $y$, $|N_G(B_y) \cap \bigcup_{q \in T \uparrow y}B_q| \geq r+2$, so $B_t \cup B_y$ is a clique of $G$ by assumption.
Then the claim follows.
\end{proof}

\begin{claim} 
\label{non-fractional-tree-colouring-claim5}
For any $k\in\NN$, if there exists a vertical path $P$ in $T_M$ on at least $k$ vertices, then there exist at least $k-1$ $f$-monochromatic components $Q$ with $m(Q)<_Gm(M)$ and $V(Q) \cap B_{t_P} \neq \emptyset$, where $t_P$ is the root of $P$.
\end{claim}

\begin{proof}
We prove this claim by induction on $k$.
The case $k=1$ is obvious.
So we may assume $k \geq 2$.

By \cref{non-fractional-tree-colouring-claim4}, there exists a path $P_M$ in $M$ such that $|V(P_M) \cap B_t|=1$ for every $t \in V(P)$, and $V(P_M) \cap B_t=\emptyset$ for every $t \in V(T)-V(P)$.
Let $y$ be the end of $P$ other than $t_P$ ($y$ exists since $k \geq 2$).
Let $v_P$ be the vertex in $V(P_M) \cap B_{y}$.
Let $p$ be the parent of $y$.
Let $i \in [|V(T)|]$ such that $y=t_i$.
Note that there exists a $f_{i-1}$-monochromatic component $M'$ containing $P_M-v_P$.
Since $M' \subseteq M$, $m(M) \leq_G m(M')$.
By considering the path in $T_M$ from the root of $T_M$ to $p$, we know $m(M)=m(M')$ by \cref{non-fractional-tree-colouring-claim2,non-fractional-tree-colouring-claim4}.

By \cref{non-fractional-tree-colouring-claim2}, 
$|N_G(B_y) \cap \bigcup_{q \in T \uparrow y}B_q| \geq r+2$.
Since $V(M) \cap B_y \neq \emptyset$, $a_i$ is not the colour of $M$.
That is, there exists a $f_{i-1}$-monochromatic component $C_i$ intersecting $B_p$ with $m(C_i)<_G m(M')=m(M)$.
Let $C$ be the $f$-monochromatic component containing $C_i$.
So $m(C) \leq_G m(C_i)<_G m(M)$.
Applying 
\cref{non-fractional-tree-colouring-claim1,non-fractional-tree-colouring-claim2,non-fractional-tree-colouring-claim4} to $C$, we know $m(C)=m(C_i)$, and the root $r_C$ of the subtree $T_C$ induced by $\{t \in V(T): B_t \cap V(C) \neq \emptyset\}$ is an ancestor of the root $r_M$ of $T_M$ or equal to $r_M$.
Hence there exists a vertical path in $T_C$ from $t_P$ to $p$ on at least $k-1$ vertices, and $V(C) \cap B_{t_P} \neq \emptyset$.
By the induction hypothesis, there exist at least $k-2$ $f$-monochromatic components $Q$ with $m(Q)<_G m(C)$ and $V(Q) \cap B_{t_P} \neq \emptyset$.
By collecting those $Q$ together with $C$, we obtain at least $k-1$ $f$-monochromatic components $Q$ with $m(Q)<_G m(M)$ and $V(Q) \cap B_{t_P} \neq \emptyset$.
\end{proof}

Let $r_M$ be the root of $T_M$.
Since $|B_x| \leq w$ for every $x \in V(T)$, there are at most $w-1$ $f$-monochromatic components $Q$ with $m(Q)<_Gm(M)$ and $V(Q) \cap B_{r_M} \neq \emptyset$.
By 
\cref{non-fractional-tree-colouring-claim5}, every vertical path in $T_M$ contains at most $w$ vertices.
This completes the proof. 
\end{proof}

If the tree of the tree-partition has bounded maximum degree, then we can do fractional colouring that reduces the ratio for the number of colours. The proof is very similar, but we write it again for completeness.

\begin{lem} 
\label{fractional-tree-colouring}
For any integers $w,\ell \geq 1$ and $d,r \geq 0$, if a graph $G$ has an $r$-quasi-tree-partition $(T,(B_x)_{x\in V(T)},(E_v)_{v\in V(G)})$ of width at most $w$ and degree at most $d$, then $G$ is $((r+1)\ell+1)$:$\ell$-list-colourable with clustering $wd^{w}$. 
\end{lem}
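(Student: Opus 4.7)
The proof will closely parallel that of Lemma~\ref{non-fractional-tree-colouring}, replacing each single colour choice by an $\ell$-element subset of a list. First, I would assume without loss of generality that each bag $B_x$ is a clique (adding edges preserves the quasi-tree-partition and can only make colouring harder). Fix a BFS ordering $t_1,\ldots,t_{|V(T)|}$ of $V(T)$ rooted at the root of $T$, and a linear order $\leq_G$ on $V(G)$ refining this bag ordering (so earlier bags come before later ones).

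Given an $((r+1)\ell+1)$-list-assignment $L$, build partial $L$:$\ell$-colourings $f_i$ on $\bigcup_{j\leq i}B_{t_j}$ greedily. At step $i$, let $p_i$ be the parent of $t_i$, let $M_i$ denote the $f_{i-1}$-monochromatic component intersecting $B_{p_i}$ whose $\leq_G$-minimum vertex is smallest, and let $a_i$ be its colour. For each $v\in B_{t_i}$, choose $f_i(v)$ to be any $\ell$-element subset of
\[
L(v)\setminus\Bigl(\{a_i\}\cup\bigcup_{uv\in E_v}f_{i-1}(u)\Bigr),
\]
which is possible because the forbidden set has size at most $1+r\ell$ while $|L(v)|\geq (r+1)\ell+1=r\ell+\ell+1$. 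Note that forbidding all $\ell$ colours from each $E_v$-endpoint guarantees that no edge of $\bigcup_v E_v$ is monochromatic in any colour.

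For the clustering bound, fix a monochromatic component $M$ in some colour $\alpha$ and let $S_M:=\{t\in V(T):B_t\cap V(M)\neq\emptyset\}$. Exactly as in Claims~\ref{non-fractional-tree-colouring-claim1}--\ref{non-fractional-tree-colouring-claim2}, the $E_v$ rule plus the $T$-partition structure force every monochromatic edge to lie inside a bag or cross a single tree edge, so $T_M:=T[S_M]$ is a subtree of $T$; and since $T_M\subseteq T$, $\Delta(T_M)\leq \Delta(T)\leq d$. The depth bound mirrors Claim~\ref{non-fractional-tree-colouring-claim5}: walking down a vertical path in $T_M$ from its root $r_M$, at each step the greedy rule forces $\alpha\neq a_i$, producing a strictly $\leq_G$-older monochromatic component hitting the current parent bag; tracing this up through the cliques $B_{r_M},B_{t_{i_1}},\ldots$ as in the induction of Claim~\ref{non-fractional-tree-colouring-claim5}, each such component intersects $B_{r_M}$ and the components generated at distinct steps are pairwise distinct and (since $B_{r_M}$ is a clique) use pairwise distinct colours. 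Combining the depth bound with $\Delta(T_M)\leq d$ and $|B_x|\leq w$ yields $|V(M)|\leq w\cdot d^{w}$.

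The main obstacle is exactly the depth bound. The naive estimate says at most $w\ell$ distinct colours can appear in $B_{r_M}$, which would give only $w d^{w\ell}$. To recover the stated $wd^{w}$ one must argue more carefully that the older components extracted by successive applications of the induction cannot independently occupy $w\ell$ different colours at $B_{r_M}$ — in effect, charging each older component to a \emph{vertex} of $B_{r_M}$ rather than a colour, using that within a clique each colour supports only one monochromatic component. Making this bookkeeping precise, in a way that interacts correctly with the greedy $a_i$ avoidance, is the crux of the proof.
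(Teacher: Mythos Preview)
Your outline matches the paper's proof almost step for step: the same greedy $L$:$\ell$-colouring (avoid one ``oldest'' colour $a_i$ together with the at most $r\ell$ colours on the $E_v$-ends), the same reduction of monochromatic edges to tree edges, and the same depth-via-older-components induction. One small correction: the paper adds edges so that $B_x\cup B_y$ is a clique for every tree edge $xy$, not just each $B_x$ separately; this is what guarantees the path $P_M$ through consecutive bags of a vertical path in $T_M$ (the analogue of Claim~\ref{non-fractional-tree-colouring-claim4}), which your ``tracing'' step needs.

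On the depth bound, where you locate the crux: the paper does \emph{not} supply the careful vertex-charging argument you anticipate. It simply repeats, verbatim from the non-fractional proof, the assertion that since $|B_x|\le w$ there are at most $w-1$ monochromatic components $Q$ with $m(Q)<_G m(M)$ and $V(Q)\cap B_{r_M}\neq\emptyset$. In the fractional setting this step is not justified as written---a single vertex of $B_{r_M}$ lies in $\ell$ monochromatic components, and nothing forces the $m$-values of the older components produced by the induction to be distinct vertices of $B_{r_M}$ (their roots $r_Q$ may be strict ancestors of $r_M$). Your ``naive estimate'' $w\ell$ is in fact what the written argument supports, yielding depth $\le w\ell$ and clustering $wd^{w\ell}$; this is also consistent with the parallel Lemma~\ref{QuasiTreePartitionFractionalColouring}, whose proof breaks ties via the pair order $(v,i)$ and obtains depth $\ell k$ (clustering exponent $\ell k-1$). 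So there is no hidden trick in the paper's proof that recovers the exponent $w$, and your proposed charging-to-vertices argument would not recover it either.
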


\begin{proof}
We may add edges to $G$ so that $B_x$ is a clique for every $x \in V(T)$, and for any edge $xy$ of $E(T)$, every vertex in $B_x$ is adjacent in $G$ to every vertex in $B_y$.

Let $\leq_T$ be a BFS-ordering of $V(T)$.
Let $\leq_G$ be a linear ordering of $V(G)$ such that for any distinct $x,y \in V(T)$ with $x \leq_T y$, if $u \in B_x$ and $v \in B_y$, then $u \leq_G v$.
For any $S \subseteq V(G)$, let $m(S)$ be the smallest vertex in $S$ according to $\leq_G$.
For any subgraph $H$ of $G$, let $m(H)=m(V(H))$.
For any $S_1,S_2$, where each $S_i$ is a subset of $V(G)$ or a subgraph of $G$, we define $S_1 \leq_G S_2$ if and only if $m(S_1) \leq_G m(S_2)$.
Denote $V(T)$ by $\{t_1,t_2,...,t_{|V(T)|}\}$ such that $t_i \leq_T t_j$ for every $i \leq j$.

Let $L$ be an arbitrary $((r+1)\ell+1)$-list-assignment of $G$.

Define $f_1$ to be an $L|_{B_{t_1}}$-colouring of $G[B_{t_1}]$ by defining $f_1(v)$ to be an arbitrary subset of $L(v)$ with size $\ell$ for every $v \in B_{t_1}$.
For every $i \in [|V(T)|]-\{1\}$, we define an $L|_{\bigcup_{j=1}^iB_{t_j}}$-colouring $f_i$ of $G[\bigcup_{j=1}^iB_{t_j}]$ as follows:
	\begin{itemize}
		\item For each $v \in \bigcup_{j=1}^{i-1}B_{t_j}$, let $f_i(v) := f_{i-1}(v)$.
		\item Let $p_i$ be the parent of $t_i$ in $T$.
		\item Let $M_i$ be the $f_{i-1}$-monochromatic component intersecting $B_{p_i}$ with the smallest $m(M_i)$. 
		\item Let $a_i$ be the colour of $M_i$.
		\item For each $v \in B_{t_i}$, let $f_i(v)$ be a subset of $L(v)-(\{a_i\} \cup \bigcup_{uv \in E_v}f_{i-1}(u))$ with size $\ell$.
	\end{itemize}
Since $|E_v| \leq r$ and $|L(v)| \geq (r+1)\ell+1$, $f_i(v)$ can be defined as above.

Let $f=f_{|V(T)|}$.
It suffices to show that $f$ has clustering $c := wd^{w}$.

Let $M$ be an arbitrary $f$-monochromatic component.
Let $S_M = \{t \in V(T): B_t \cap V(M) \neq \emptyset\}$.
To prove $|V(M)| \leq c$, it suffices to show that $S_M$ induces a subtree $T_M$ such that every path in $T_M$ from the root of $T_M$ to any other vertex contains at most $w$ vertices (since it implies that $|V(M)| \leq w|S_M| \leq w\sum_{i=0}^{w-1}d^i \leq wd^{w}=c$, as $\Delta(T) \leq d$).

\begin{claim}
\label{fractional-tree-colouring-claim1}
If $uv$ is an edge of $G$ with $\{u,v\} \subseteq V(M)$ and $u \leq_G v$, then there exists an edge $xy$ of $T$ such that $\{u,v\} \subseteq B_x \cup B_y$.
\end{claim}

\begin{proof}
Let $x,y \in V(T)$ such that $u \in B_x$ and $v \in B_y$.
Since $u \leq_G v$, $x \leq_T y$.
Suppose to the contrary that $x \neq y$ and $x$ is not the parent of $y$.
Since $(B_t: t \in V(T))$ is a tree-partition of $G-\bigcup_{q \in V(G)}E_q$, $uv \in E_u \cup E_v$.
By the property of $E_u$ and $E_v$, we know that $uv \in E_v$ and $x \in T \uparrow y$.
Since $uv \in E_v$, $f(u) \neq f(v)$ by the definition of $f$, contradicting that $M$ is monochromatic.
\end{proof}

Since $M$ is connected, by \cref{fractional-tree-colouring-claim1}, we know $T_M = T[S_M]$ is a subtree of $T$.

\begin{claim} 
\label{fractional-tree-colouring-claim2}
For every vertical path $P$ in $T_M$, there exists a path $P_M$ in $M$ such that $|V(P_M) \cap B_t|=1$ for every $t \in V(P)$, and $V(P_M) \cap B_t=\emptyset$ for every $t \in V(T)-V(P)$.
\end{claim} 

\begin{proof}
Since for every edge $ty$ of $P$, $B_t \cup B_y$ is a clique of $G$ by assumption, the claim follows.
\end{proof}

\begin{claim} 
\label{fractional-tree-colouring-claim3}
For any $k\in\NN$, if there exists a vertical path $P$ in $T_M$ on at least $k$ vertices, then there exist at least $k-1$ $f$-monochromatic components $Q$ with $m(Q)<_Gm(M)$ and $V(Q) \cap B_{t_P} \neq \emptyset$, where $t_P$ is the root of $P$.
\end{claim} 

\begin{proof}
We prove this claim by induction on $k$.
The case $k=1$ is obvious.
So we may assume $k \geq 2$.

By \cref{fractional-tree-colouring-claim2}, 
there exists a path $P_M$ in $M$ such that $|V(P_M) \cap B_t|=1$ for every $t \in V(P)$, and $V(P_M) \cap B_t=\emptyset$ for every $t \in V(T)-V(P)$.
Let $y$ be the end of $P$ other than $t_P$ ($y$ exists since $k \geq 2$).
Let $v_P$ be the vertex in $V(P_M) \cap B_{y}$.
Let $p$ be the parent of $y$.
Let $i \in [|V(T)|]$ such that $y=t_i$.
Note that there exists a $f_{i-1}$-monochromatic component $M'$ containing $P_M-v_P$.
Since $M' \subseteq M$, $m(M) \leq_G m(M')$.
By considering the path in $T_M$ from the root of $T_M$ to $p$, we know $m(M)=m(M')$. 

Since $V(M) \cap B_y \neq \emptyset$, $a_i$ is not the colour of $M$.
That is, there exists a $f_{i-1}$-monochromatic component $C_i$ intersecting $B_p$ with $m(C_i)<_G m(M')=m(M)$.
Let $C$ be the $f$-monochromatic component containing $C_i$.
So $m(C) \leq_G m(C_i)<_G m(M)$.
Applying \cref{fractional-tree-colouring-claim1} and \cref{fractional-tree-colouring-claim2} to $C$, we know $m(C)=m(C_i)$, and the root $r_C$ of the subtree $T_C$ induced by $\{t \in V(T): B_t \cap V(C) \neq \emptyset\}$ is an ancestor of the root $r_M$ of $T_M$ or equal to $r_M$.
Hence there exists a vertical path in $T_C$ from $t_P$ to $p$ on at least $k-1$ vertices, and $V(C) \cap B_{t_P} \neq \emptyset$.
By the induction hypothesis, there exist at least $k-2$ $f$-monochromatic components $Q$ with $m(Q)<_G m(C)$ and $V(Q) \cap B_{t_P} \neq \emptyset$.
By collecting those $Q$ together with $C$, we obtain at least $k-1$ $f$-monochromatic components $Q$ with $m(Q)<_G m(M)$ and $V(Q) \cap B_{t_P} \neq \emptyset$.
\end{proof}

Let $r_M$ be the root of $T_M$.
Since $|B_x| \leq w$ for every $x \in V(T)$, there are at most $w-1$ $f$-monochromatic components $Q$ with $m(Q)<_Gm(M)$ and $V(Q) \cap B_{r_M} \neq \emptyset$.
By \cref{fractional-tree-colouring-claim3}, every vertical path in $T_M$ contains at most $w$ vertices.
This proves the lemma.
\end{proof}

\begin{thm}
For any $s,a,b,k \in \NN$ with $a,b \geq 2$, there exists $c\in\NN$ such that for any $\ell\in\NN$ and graph $G$ with $\tw(G) \leq k-1$:
    \begin{enumerate}
        \item If $G$ contains no 1-extension of $K_{s,a}$ and no skewered $K_{s,b}$, then $G$ is $(s+1)$-list-colourable with clustering $c$.
        \item If $G$ is $K_{s,a}$-subgraph-free, then $G$ is $(s\ell+1)$:$\ell$-list-colourable with clustering $c$.
    \end{enumerate}
\end{thm}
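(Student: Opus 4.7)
The plan is to prove the two parts independently by pairing the appropriate structural result from \cref{StructureII}/\cref{StructureI} with the matching colouring lemma, and then taking $c$ to be the maximum of the two resulting clustering bounds (which, crucially, are independent of $\ell$).

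For part (1), I will first note that $\rho(G) \leq \tw(G) \leq k-1$ by \cref{rho-tw}. Apply \cref{NoExtensionSkewered-AlmostTreePartition} to $G$ with $S = \emptyset$ and $\rho = k-1$, setting $t_1 := (s+(a-1)(b-1))(a-1)(b-1)+k+1$ and $c_1 := c(s,t_1,k-1)$ as in \cref{pi}. This yields a clean $(s-1)$-quasi-tree-partition $(T,(B_x)_{x\in V(T)},(E_v)_{v\in V(G)})$ of $G$ of width at most $w_1:=18c_1k$ in which every node of $T$ has at most $d_1:=6c_1$ $(s+1)$-heavy children. Since $s+1 = (s-1)+2$, this is exactly the input required for \cref{non-fractional-tree-colouring} with $r = s-1$, $w = w_1$, $d = d_1$, which concludes that $G$ is $(s+1)$-list-colourable with clustering at most $w_1(d_1+1)^{w_1}$. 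Call this quantity $c^{(1)}$.

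For part (2), observe that since $K_{s,a}$ is a subgraph of $K^*_{s,a}$, any $K_{s,a}$-subgraph-free graph is also $K^*_{s,a}$-subgraph-free. Hence, with $\rho(G) \leq k-1$ once again, \cref{noKstStructure} (with $S = \emptyset$, $t=a$, and $c_2 := c(s,a,k-1)$) gives a clean $(s-1)$-quasi-tree-partition of $G$ of width at most $w_2 := 18c_2k$ and degree at most $d_2 := 6c_2$. Because $s\ell+1 = ((s-1)+1)\ell+1$, this fits \cref{fractional-tree-colouring} with $r=s-1$, $w = w_2$, $d = d_2$, yielding that $G$ is $(s\ell+1){:}\ell$-list-colourable with clustering at most $w_2 d_2^{\,w_2}$, a bound independent of $\ell$. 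Call this quantity $c^{(2)}$.

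Finally, set $c := \max\{c^{(1)}, c^{(2)}\}$; both conclusions of the theorem hold with this single value of $c$, which depends only on $s,a,b,k$.

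The pairing of parameters is the critical feature: in both structural results the ``quasi-ness'' comes out to $r = s-1$, so the colouring lemmas deliver exactly $r+2 = s+1$ colours (for part (1)) and $(r+1)\ell+1 = s\ell+1$ fractional colours (for part (2)). The only mild care needed is in verifying the implication ``$K_{s,a}$-subgraph-free $\Rightarrow$ $K^*_{s,a}$-subgraph-free'' (which is the right direction for applying \cref{noKstStructure}, since $K^*_{s,a} \supseteq K_{s,a}$), and in observing that the clustering bound $w_2 d_2^{\,w_2}$ produced by \cref{fractional-tree-colouring} does not depend on $\ell$. There is no substantial obstacle; the proof is a direct assembly of previously established results.
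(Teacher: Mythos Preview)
Your proposal is correct and follows essentially the same approach as the paper: the paper invokes \cref{s-2-no-k} (rather than \cref{NoExtensionSkewered-AlmostTreePartition}) together with \cref{non-fractional-tree-colouring} for part~(1), and \cref{heart} (rather than its wrapper \cref{noKstStructure}) together with \cref{fractional-tree-colouring} for part~(2), but these substitutions are inessential since each pair of structural results delivers the same hypotheses needed by the respective colouring lemma. One cosmetic point: you set $\rho = k-1$, which is $0$ when $k=1$; to stay within the $\rho\in\NN$ hypothesis of \cref{pi} you could take $\rho=\max\{k-1,1\}$ (or simply $\rho=k$, as the paper implicitly does), but this does not affect the argument.
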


\begin{proof}
Recall that $\rho(G) \leq \tw(G)$ by \cref{rho-tw}. 
Statement~1 follows from \cref{s-2-no-k} and \cref{non-fractional-tree-colouring}.
Statement~2 follows from \cref{heart} and \cref{fractional-tree-colouring}. 
\end{proof}



\noindent{\bf Acknowledgement:} 
This paper was partially written when the first author visited the National Center for Theoretical Sciences in Taiwan. He thanks the National Center for Theoretical Sciences for its hospitality.

{
\fontsize{10pt}{11pt}
\selectfont
\bibliographystyle{DavidNatbibStyle}
\bibliography{DavidBibliography}
}
\end{document}